\newcommand{\comment}[1]{}
\newtheorem{lem}{Lemma}[section]
\newtheorem{propn}{Proposition}[section]
\newtheorem{cor}{Corollary}[section]
\newtheorem{thm}{Theorem}[section]
\theoremstyle{remark}
\theoremstyle{definition}
\newtheorem{defn}{Definition}[section]
\newcommand{\R}{\mathbb R}
\newcommand{\NN}{\mathcal N}
\newcommand{\EE}{\mathcal E}
\newcommand{\UU}{\mathcal U}
\newcommand{\PP}{\mathcal P}
\newcommand{\QQ}{\mathcal Q}
\newcommand{\RR}{\mathcal R}
\newcommand{\mS}{\mathcal S}
\newcommand{\D}{\delta}
\newcommand{\de}{\delta}
\newcommand{\eps}{\varepsilon}
\newcommand{\VE}{\varepsilon}
\newcommand{\A}{\alpha}
\newcommand{\al}{\alpha}
\newcommand{\B}{\beta}
\newcommand{\Be}{\beta}
\newcommand{\ga}{\gamma}
\newcommand{\lm}{\lambda}
\newcommand{\be}{\begin{equation}}
\newcommand{\ee}{\end{equation}}
\newcommand{\bee}{\begin{equation*}}
\newcommand{\eee}{\end{equation*}}
\begin{document}
\title{Product of Simplices and sets of positive upper density in $\R^d$}
\author{Neil Lyall\quad\quad\quad\'Akos Magyar}
\thanks{The first and second authors were partially supported by Simons Foundation Collaboration Grant for Mathematicians 245792 and by Grants NSF-DMS 1600840 and  ERC-AdG 321104, respectively.}

\address{Department of Mathematics, The University of Georgia, Athens, GA 30602, USA}
\email{lyall@math.uga.edu}
\address{Department of Mathematics, The University of Georgia, Athens, GA 30602, USA}
\email{magyar@math.uga.edu}

\subjclass[2010]{11B30}

\begin{abstract}
We establish that any subset of $\mathbb{R}^d$ of positive upper Banach density necessarily contains an isometric copy of all sufficiently large dilates of any fixed two-dimensional rectangle provided $d\geq4$.

We further present an extension of this result to configurations that are the product of two non-degenerate simplices; specifically we show that if $\Delta_{k_1}$ and $\Delta_{k_2}$ are two fixed non-degenerate simplices of $k_1+1$ and $k_2+1$ points respectively, then
any subset of $\mathbb{R}^d$ of positive upper Banach density with $d\geq k_1+k_2+6$  will necessarily contain an isometric copy of all sufficiently large dilates of $\Delta_{k_1}\times\Delta_{k_2}$.

A new direct proof of the fact that any subset of $\mathbb{R}^d$ of positive upper Banach density necessarily contains an isometric copy of all sufficiently large dilates of any fixed non-degenerate simplex of $k+1$ points provided $d\geq k+1$, a result originally due to Bourgain, is also presented.
\end{abstract}
\maketitle

\setlength{\parskip}{1pt}


\section{Introduction}\label{intro}

\subsection{Background}

Recall that the \emph{upper Banach density} of a measurable set $A\subseteq\R^d$ is defined by
\be\label{BD}
\D^*(A)=\lim_{N\rightarrow\infty}\sup_{t\in\R^d}\frac{|A\cap(t+Q_N)|}{|Q_N|},\ee
where $|\cdot|$ denotes Lebesgue measure on $\R^d$ and $Q_N$ denotes the cube $[-N/2,N/2]^d$.

A result of Katznelson and Weiss \cite{FKW} states that
if $A\subseteq\mathbb{R}^2$ has positive upper Banach density, then its distance set
\[\text{dist}(A)=\{|x-x'|\,:\, x,x'\in A\}\] contains all large numbers.
This result was later reproved using Fourier analytic techniques by Bourgain in \cite{B} where he established the following more general result for arbitrary non-degenerate $k$-dimensional simplices.

\begin{thm}[Bourgain \cite{B}]\label{BourSimp}
Let $\Delta_k\subseteq\R^{k}$ be a fixed non-degenerate $k$-dimensional simplex.

If $A\subseteq\R^d$ has positive upper Banach density and $d\geq k+1$, then
 there exists a threshold $\lm_0=\lm_0(A,\Delta_k)$ such that $A$ contains an isometric copy of $\lm\cdot\Delta_k$ for all $\lambda\geq \lm_0$.
\end{thm}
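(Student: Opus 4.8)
\emph{Proof strategy.} The plan is to argue by Fourier analysis, reducing to a periodic setting and establishing positivity of a configuration‑counting functional. Since $\D^*(A)=\de>0$, for every sufficiently large $R$ there is a cube $W$ of sidelength $R$ with $|A\cap W|\ge\tfrac{\de}{2}|W|$; taking $R=C\lm$ with $C=C(\Delta_k,\de)$ large enough that any isometric copy of $\lm\cdot\Delta_k$ contained in $W$ survives passage to the torus $\T_R^d=\R^d/R\Z^d$, it suffices to show that for $f=\mathbf 1_{A\cap W}$ the count
\[
\NN_\lm(f)\ :=\ \int_{\T_R^d} f(x)\Big(\int \prod_{i=1}^{k} f(x+y_i)\,d\si_\lm(y_1,\dots,y_k)\Big)\,dx
\]
is strictly positive once $\lm$ is large. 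Here, writing the vertices of $\Delta_k$ as $0,v_1,\dots,v_k$, the measure $\si_\lm$ is the natural probability measure on the variety $\{(y_1,\dots,y_k):\langle y_i,y_j\rangle=\lm^2\langle v_i,v_j\rangle\}$, so that $\NN_\lm(f)>0$ says precisely that $A$ contains an isometric copy of $\lm\cdot\Delta_k$.

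The one geometric ingredient I would isolate is the ``peeling'' structure of $\si_\lm$: for $\si_\lm$–almost every point the vectors $y_1,\dots,y_{j-1}$ are linearly independent, and then $y_j$ ranges over a sphere of radius comparable to $\lm$ and of dimension $d-j$ inside the affine flat $\{y:\langle y,y_i\rangle=\lm^2\langle v_i,v_j\rangle,\ i<j\}$. Thus $\si_\lm$ disintegrates as an iterated average of spherical measures of dimensions $d-1,d-2,\dots,d-k$, and the hypothesis $d\ge k+1$ is exactly what makes each of these dimensions at least $1$; consequently each factor obeys a decay estimate of the form $|\widehat{d\si^{(j)}}(\eta)|\lesssim(1+\lm\,\mathrm{dist}(\eta,\langle y_1,\dots,y_{j-1}\rangle))^{-(d-k)/2}$, and $\widehat{\si_\lm}(\eta_1,\dots,\eta_k)$ decays as $\lm\to\infty$ whenever some $\eta_j$ has a bounded‑below component transverse to the span of the earlier vectors. (This peeling is also what organizes the ``direct'' argument as an induction on $k$, adjoining one vertex at a time.)

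With this in hand I would split $f=f^{\flat}+f^{\sharp}$, with $f^{\flat}=f*\vp_L$ the smoothing of $f$ at a scale $L$ chosen with $1\ll L\ll\lm$ and $\widehat{\vp_L}$ a cutoff to $|\xi|\lesssim 1/L$. Expanding $\NN_\lm(f)$ as a Fourier series, the all‑zero‑frequency term equals $(|A\cap W|/|W|)^{k+1}\ge(\de/2)^{k+1}$, while every term carrying a frequency $\gtrsim 1/L$ can, using $\sum_i\xi_i=0$, the non‑degeneracy of $\Delta_k$, and the rotational averaging built into $\si_\lm$ (which recovers the transverse decay on average over the orientation of the simplex), be absorbed into a decaying spherical slice and is therefore $O\big((\lm/L)^{-(d-k)/2}\big)$, hence negligible as $\lm\to\infty$. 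So $\NN_\lm(f)=\NN_\lm(f^{\flat})+o(1)$, and everything comes down to the lower bound $\NN_\lm(f^{\flat})\gtrsim_{\de}1$.

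This last point is the crux, and the step I expect to be the main obstacle. The function $f^{\flat}$ is $[0,1]$–valued with mean $\ge\de/2$ but varies on scales $\ge L$ with $L\ll\lm$, so it need not be even approximately constant on a ball of radius $\lm$, and $\widehat{\si_\lm}$ is genuinely sign‑changing (the smallest slice $S^{d-k}$ degenerates to a circle when $d=k+1$), so no free positivity is available. I would handle it by a density‑increment scheme: if $\NN_\lm(f^{\flat})$ failed to be comparable to $(\mathrm{mean}\,f^{\flat})^{k+1}$, then—after discarding the negligible part of the spectrum—a definite proportion of the $L^2$–mass of $f^{\flat}$ would have to sit on the relatively low frequencies, forcing the density of $A$ at some intermediate scale (between $L$ and $R$) to exceed its mean by a fixed amount $c(\de)>0$ on a subcube; one then restarts the argument on that subcube, and the iteration terminates after $O_{\de}(1)$ rounds since the density is bounded by $1$. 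The delicate bookkeeping is to run the increment so that the subcubes stay of sidelength $\gg\lm$ at every stage—which is where the slack $R=C\lm$ with $C$ large, together with the extra dimension afforded by $d\ge k+1$, is genuinely used. Tracking the constants then shows that the resulting threshold $\lm_0$ depends only on $\de$ and $\Delta_k$, completing the proof.
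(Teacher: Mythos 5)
Your treatment of the high‑frequency part is essentially the same as the paper's (and Bourgain's): writing the configuration measure as an iterated average of spherical measures of dimensions $d-1,\dots,d-k$ and exploiting the transverse decay of their Fourier transforms is exactly how the paper proves its generalized von Neumann inequality for simplices (Lemma \ref{GvN00}). The genuine gap is at the step you yourself flag as the crux, the lower bound for $\NN_\lm(f^{\flat})$, and it is not a matter of delicate bookkeeping: a density increment at a \emph{fixed} $\lm$ cannot be arranged on subcubes of sidelength $\gg\lm$. What a deficient count at scale $\lm$ actually yields (via the von Neumann inequality) is largeness of the $U^1(\VE^4\lm)$‑norm of the balanced function, i.e.\ a density increment on a cube of sidelength about $\VE^4\lm\ll\lm$ --- far too small to contain a copy of $\lm\cdot\Delta_k$, so ``restarting on that subcube'' says nothing about the given dilate. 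Worse, the statement your scheme would establish --- that any set of density $\ge\de/2$ in a cube of side $C(\de)\lm$ contains a copy of $\lm\cdot\Delta_k$, with a threshold depending only on $\de$ and $\Delta_k$ --- is false: the union of balls of radius $0.45\lm$ centred at the points of $2\lm\Z^d$ has density bounded below independently of $\lm$, contains no pair of points at distance exactly $\lm$ (hence no isometric copy of $\lm\cdot\Delta_k$), and has essentially constant density on every cube of side $\gg\lm$, so the increments you need simply do not exist. The same example shows $\lm_0$ must depend on $A$ itself and not only on $\de^*(A)$, contrary to your closing claim.

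The missing idea is how the cube $W$ is chosen. The paper does not take an arbitrary cube of density $\ge\de/2$; it takes a cube of \emph{nearly maximal} density (within a factor $1\pm\VE^4$ of $\de^*(A)$, available at arbitrarily large scales), while by the definition of upper Banach density every cube of the much smaller sidelength $\VE^4\lm$ carries density at most $(1+\VE^4)\,\de^*(A)$ once $\lm\ge\VE^{-4}M_0(A,\VE)$. A short averaging/Chebyshev argument (Section \ref{newreduction}) then shows the rescaled set is automatically $(\VE,\VE^4\lm)$‑uniformly distributed, and Proposition \ref{Propn00} gives the count $\A^{k+1}+O_k(c_{\Delta_k}^{-1/6}\VE^{2/3})>0$ outright --- no increment and no multiple scales, with the $A$‑dependence of $\lm_0$ entering through $M_0(A,\VE)$. (Bourgain's original route is instead multi‑scale: take widely separated candidate scales and pigeonhole the $L^2$‑mass of $\widehat f$ over the corresponding frequency regions; the present paper needs that kind of argument, together with a regularity lemma, only for the product results, Propositions \ref{Propn1} and \ref{Propn11}.) If you replace your increment step by either of these devices, the rest of your outline goes through.
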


Recall that a set $\Delta_k=\{0,v_1,\dots,v_k\}$ of $k+1$ points in $\R^{k}$ is a non-degenerate $k$-dimensional simplex if the vectors $v_1,\dots,v_k$ are linearly independent and
 that a configuration $\Delta_k'$ is  an isometric copy of $\lm\cdot \Delta_k$ in $\R^d$ if $\Delta'_k=x+\lm\cdot U(\Delta_k)$ for
some $x\in \R^d$ and $U\in SO(d)$ when $d\geq k+1$.

\subsection{Main Results}

In Section \ref{newdistances} we present a new and direct proof of Theorem \ref{BourSimp} when $k=1$, namely a new proof of the aforementioned distance set result of Katznelson and Weiss. A new direct proof of Theorem \ref{BourSimp} in its full generality is also given, in fact two different new approaches are presented in Section \ref{newsimplices}.
However, the main purpose of this article is to establish the following new results, namely Theorems \ref{Rect} and \ref{ProdSimp} below.

\begin{thm}\label{Rect}
Let $\Box=\{0,v_1,v_2, v_1+v_2\}\subseteq\R^2$ with $v_1\cdot v_2=0$ denote a fixed two-dimensional rectangle.

If $A\subseteq\R^d$ has positive upper Banach density and $d\geq 4$, then
 there exists a threshold $\lm_0=\lm_0(A,\Box)$ such that $A$ contains an isometric copy of $\lm\cdot\Box$ for all $\lambda\geq \lm_0$.
\end{thm}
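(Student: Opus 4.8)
The plan is to realize the rectangle $\lm\cdot\Box$ inside $A$ as a pair of orthogonal edges of a common length $\lm$ sharing a vertex, by a two-step "pinning" argument combined with a Fourier-analytic counting estimate, much in the spirit of Bourgain's original approach to Theorem \ref{BourSimp}. First I would pass from the set $A$ of positive upper Banach density to a model situation on a large torus: by a standard limiting/pigeonholing argument one finds, for a suitable scale $N$ much larger than $\lm$, a set $A'\subseteq\T_N^d=(\R/N\Z)^d$ of density $\geq\de^*(A)/2$ that locally agrees with a translate of $A$, so that it suffices to find the configuration in $A'$. I would then fix a smooth bump $\psi$ at a small scale (a positive proportion of $\lm$) and convolve $1_{A'}$ with it so that all the relevant integral expressions make sense and the main terms survive.

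The heart of the argument is a multilinear counting operator: I would consider the averaged count of pairs $(x, x+\lm\, \omega_1, x+\lm\,\omega_2)$ with $\omega_1\perp\omega_2$, $|\omega_1|=|\omega_2|=1$, weighted by $1_{A'}$ at all three points, integrated against the natural $SO(d)$-invariant measure on the Stiefel-type manifold of orthonormal $2$-frames. (Note that finding such a pair of orthogonal segments of equal length automatically produces the fourth vertex $x+\lm\,\omega_1+\lm\,\omega_2$ only if one also controls that point; so in fact the operator should be genuinely $4$-linear, counting all four vertices of the rectangle.) Expanding in Fourier series on $\T_N^d$, the main term is the "random" count $\sim\de^4$ coming from the zero frequencies, and I would need to show the contribution of the nonzero frequencies is negligible once $\lm$ is large. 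This is where the dimension hypothesis $d\ge4$ enters: the Fourier transform of the surface measure on the manifold of orthonormal $2$-frames in $\R^d$, which is essentially a product/convolution of two spherical measures living in orthogonal complements, decays like $|\xi|^{-(d-2)/2}$ in each factor, so square-summability of these weights over the nonzero lattice requires $d-2>2$, i.e.\ $d\ge4$ (after a further $\eps$ loss absorbed by a smoothing in the radial variable, exactly as in Bourgain's treatment of the single simplex when $d\ge k+1$).

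The main obstacle, and the step I would spend the most care on, is handling the two "degenerate" frequency ranges that a crude split into zero/nonzero frequencies does not dispose of: the terms where the frequencies dual to the two edges are nonzero but one of them is very small, and — more seriously — the term where the two edge-frequencies are nonzero but (nearly) opposite, which corresponds to the two orthogonal edges being forced into a configuration that the oscillatory decay does not control. The standard device, again following Bourgain and its later refinements, is a two-step pinning: first fix (pin) the endpoint $x+\lm\,\omega_1$ of the first edge using that the relevant sublevel set of $x$'s has positive density, obtaining a set $A''$ of comparable density, and then run the counting argument for the second orthogonal edge (and the fourth vertex) inside $A''$, where the remaining oscillatory integral is a single spherical average in the $(d-1)$-dimensional orthogonal complement and the error term is genuinely small for $d-1\ge 3$. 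Iterating the pinning once more to capture the fourth vertex, and keeping track that each pinning only costs a constant factor in density and that the threshold $\lm_0$ depends only on $\de^*(A)$ and $\Box$, completes the proof. I expect the orthogonality constraint (as opposed to Bourgain's generic simplex) to be exactly what makes the naive single-step count fail and the pinning necessary, which is presumably why $d\ge4$ rather than $d\ge3$ is needed here.
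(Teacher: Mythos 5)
There is a genuine gap at the heart of your plan: the four-linear rectangle count is \emph{not} controlled by Fourier uniformity, so the ``main term $\delta^4$ plus error bounded by decay of the frame measure'' paradigm breaks down in an essential way. When you expand the quadrilinear form in Fourier, besides the zero-frequency main term you get paired/degenerate frequency contributions (exactly the ``nearly opposite edge-frequency'' terms you flag) which are of the same order as the main term for sets of positive density that correlate with product structure; no amount of decay of the surface measure on the $2$-frame manifold, and no choice of $d$, makes them negligible. Your proposed remedy --- pinning an edge and arguing that ``each pinning only costs a constant factor in density'' --- does not close this: after pinning you still need the fourth vertex, and controlling the resulting correlations is precisely a box-norm (Gowers/grid-norm type) problem, not an $L^2$--Fourier one. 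Indeed this obstruction is visible already in the statement one can hope to prove: a single-scale asymptotic count $\approx\alpha^4$ at every large $\lambda$ is false in general (e.g.\ product sets $B_1\times B_2$ have count $\to\alpha^2$, and other structured examples skew the count), which is why no von Neumann--type estimate alone can suffice.

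The paper's route is structurally different and supplies exactly the missing machinery. It splits $\R^d=\R^{d_1}\times\R^{d_2}$ with $d_1,d_2\geq2$ (this, not decay exponents of a Stiefel measure, is where $d\geq4$ enters) and looks for rectangles in the special product position $\{(x,y),(x',y),(x,y'),(x',y')\}$. The count relative to a product $B_1\times B_2$ is bounded by a box norm $\|\cdot\|_{\Box(L)}$ via a relative generalized von Neumann inequality (Lemma \ref{GvN1}); since box-norm largeness is not ruled out by uniform distribution, an inverse theorem (Theorem \ref{InvThm}) converts it into a density increment of $A$ on a product $B_1'\times B_2'$ of subsets of small cubes, and a regularity lemma adapted to a lacunary sequence of scales (Theorem \ref{reglemma}) restores uniform distribution so the increment can be iterated. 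This is also why the quantitative statement (Proposition \ref{Propn1}) only asserts positivity of the count at \emph{some} scale $\lm_j$ among any $J(\alpha)$ lacunarily spaced scales, with the ``all sufficiently large $\lm$'' conclusion recovered by a contradiction/pigeonhole argument --- a formulation your single-scale approach cannot reach. To repair your proposal you would need to replace the pinning step by some version of this box-norm inverse theorem and density-increment iteration (or another argument handling the structured, non-oscillatory part of the count), which is the actual content of Sections \ref{4}--\ref{SecPart2} of the paper.
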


Since $d\geq4$ we can write $\R^d=\R^{d_1}\times\R^{d_2}$ with $d_1,d_2\geq2$. It is important to note that the isometric copies of $\lm\cdot\Box$, whose existence in $A$ Theorem \ref{Rect} guarantees, will in fact all be of the  special form
\[\{(x,y), (x',y), (x,y'), (x',y')\}\subseteq \R^{d_1}\times\R^{d_2}\]
where $|x-x'|=\lm|v_1|$ and $|y-y'|=\lm|v_2|.$


We also establish the following generalization of Theorem \ref{Rect}, but with a slight loss in the dimension $d$.

\begin{thm}\label{ProdSimp}
Let $\Delta_{k_1}$ and $\Delta_{k_2}$ be two fixed non-degenerate simplices of dimension $k_1$ and $k_2$.

If $A\subseteq\R^d$ has positive upper Banach density with $d\geq k_1+k_2+6$, then
 there exists a threshold $\lm_0=\lm_0(A,\Delta_{k_1},\Delta_{k_2})$ such that $A$ contains an isometric copy of $\lm\cdot(\Delta_{k_1}\times\Delta_{k_2})$  of the form $\Delta_{k_1}'\times\Delta_{k_2}'$ with each $\Delta_{k_i}'\subseteq\R^{d_i}$ an  isometric copy of $\lm\cdot\Delta_{k_i}$ for all $\lambda\geq \lm_0$.
\end{thm}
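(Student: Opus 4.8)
The plan is to reduce the product-simplex problem to a ``fiberwise'' version of Bourgain's simplex theorem, applied simultaneously in the two factors $\R^{d_1}$ and $\R^{d_2}$ with $d=d_1+d_2$, $d_1\ge k_1+3$, $d_2\ge k_2+3$ (this is where the $+6$ enters: each factor needs three extra dimensions beyond what Bourgain's bare simplex result would require, to leave room for the averaging/counting argument and a nontrivial $L^2$ estimate in each variable). The key point is that we are not looking for an arbitrary isometric copy of $\lm\cdot(\Delta_{k_1}\times\Delta_{k_2})$, but only the split ones: configurations $\Delta_{k_1}'\times\Delta_{k_2}'$ with $\Delta_{k_i}'\subseteq\R^{d_i}$. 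So I would set up a multilinear counting form
\[
N_\lm(A)=\int \prod_{\omega\in\Delta_{k_1}\times\Delta_{k_2}} \mathbf 1_A\bigl(z+U_1\omega_1+U_2\omega_2\bigr)\,d\mu_\lm,
\]
where $z$ ranges over a large cube, $U_i\in SO(d_i)$ acts on the $i$-th block of coordinates, $\mu_\lm$ is the natural $SO(d_1)\times SO(d_2)$-invariant probability measure on the space of such split isometric copies of the $\lm$-dilate, and then show $N_\lm(A)>0$ for all large $\lm$.

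The main steps I would carry out are as follows. First, a \emph{spherical-average decomposition}: write the measure $\mu_\lm$ as a tensor-type convolution of the Bourgain-type simplex-counting measures $\si^{(1)}_\lm$ on $\R^{d_1}$ and $\si^{(2)}_\lm$ on $\R^{d_2}$ (each being the measure whose positivity against $\mathbf 1_A$ encodes ``$A$ contains $\lm\cdot\Delta_{k_i}$ through a given base vertex''), and split off the main term by replacing each $\si^{(i)}_\lm$ with its ``continuous'' part, i.e. the contribution that in Fourier space looks like $d\xi$ convolved against a bump; the error terms are controlled exactly as in the new proof of Theorem \ref{BourSimp} presented in Section \ref{newsimplices}. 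Second, an \emph{$L^2$ / transference estimate in each variable}: after the decomposition one is left with bounding, for $f=\mathbf 1_A$ mean-zero-adjusted, a bilinear expression of the form $\langle f, (\si^{(1)}_\lm\otimes\si^{(2)}_\lm - \text{main})*f\rangle$, which one estimates by Plancherel using the decay of the Fourier transforms $\widehat{d\si^{(i)}_\lm}$ away from the origin (these are the spherical-type Bessel factors coming from the simplices, whose uniform decay requires $d_i$ to exceed $k_i$ by enough). Third, a \emph{positivity/continuity argument}: the main term is comparable to $\de^*(A)^{|\Delta_{k_1}\times\Delta_{k_2}|}$ up to the ``$\lm\to\infty$'' normalization, and one concludes via a pigeonhole over a sub-cube (or a compactness/$U^1$-type argument) that $N_\lm(A)>0$ once $\lm\ge\lm_0$.

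The hard part, and the reason the proof is not a completely formal tensor product of two copies of Theorem \ref{BourSimp}, is controlling the ``mixed'' error terms that are \emph{not} a product of a main term in one variable and an error in the other, but rather error-times-error, together with the terms where one factor is the simplex-counting operator and the other is its smoothed version: one has to run the two-variable version of Bourgain's argument so that the Fourier decay in $\xi_1$ and the Fourier decay in $\xi_2$ are exploited \emph{simultaneously}, keeping track of the fact that the ``bad'' frequency regions in the two variables need not be aligned. I expect this is handled by a further dyadic decomposition of each $\widehat{d\si^{(i)}_\lm}$ into a main bump near $0$, a polynomially decaying annular part, and a ``spherical Radon transform'' part, and then checking that every one of the nine resulting pieces except the main$\times$main one can be absorbed — the spherical-part contributions being dealt with by a restriction/extension estimate in each factor (this is precisely why each $d_i$ needs the extra room, so that the relevant spherical averaging operator on $\R^{d_i-1}$ is bounded on the right $L^p$ space). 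The positivity of the main term and the reduction of Theorem \ref{Rect} to the case $k_1=k_2=1$ of this theorem (with the sharper dimension $d=4$ recovered because for a single distance no restriction estimate is needed, only $L^2$) would then follow as indicated in the paragraph after the statement of Theorem \ref{Rect}.
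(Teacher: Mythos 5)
Your proposal runs a single\hyphen scale ``main term plus small error'' transference argument: replace the split counting measure by a smoothed version, call the smoothed count the main term (claimed to be comparable to $\delta^*(A)^{(k_1+1)(k_2+1)}$), and absorb the errors using Fourier decay of the simplex measures together with restriction\hyphen type estimates. This cannot work for the product configuration, and the obstruction is structural, not technical. For a single simplex the error in the generalized von Neumann inequality is controlled by a $U^1$\hyphen type norm of the balanced function, which can be forced small simply by passing to a cube where the density of $A$ is nearly extremal (this is exactly the reduction in Section \ref{newreduction}). For a product of two simplices the error is controlled by a two\hyphen variable \emph{box} norm $\|f_A\|_{\Box(L)}$, and no choice of cube, no Fourier decay, and no restriction estimate makes this small for an arbitrary dense set: consider $A=\bigcup_{i=1}^{N}(Q_i^1\times Q_i^2)\subseteq\R^{d_1}\times\R^{d_2}$, a ``diagonal'' union of product cells of diameter $s$. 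This set has positive density, yet for $\lm\gg s$ it contains \emph{no} configuration $\{(x,y),(x',y),(x,y'),(x',y')\}$ with $|x-x'|\approx\lm$, since $(x,y),(x',y)\in A$ forces $x,x'$ into the same cell. Hence at that scale the true count is zero while your claimed main term would be $\gtrsim\A^{(k_1+1)(k_2+1)}$; the error must therefore be at least that large, and indeed the box norm of $f_A$ for such a set is of size comparable to a power of the density, not $o(1)$. Any bound on the error that, like yours, depends only on decay/restriction properties of the measures (and hence is uniform in the structure of $A$) is thus false. This also shows that positivity of the count cannot hold at every single large scale of the rescaled compact problem, so the conclusion must be extracted from a family of scales rather than one.

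The paper's proof supplies precisely the machinery your outline is missing. It reduces to a quantitative statement (Proposition \ref{Propn11}) for a lacunary sequence $\lm_1>\lm_2>\cdots$ and proves a dichotomy relative to a product $B_1\times B_2$ of uniformly distributed sets (Proposition \ref{part11}): either the relative count is $\geq\frac12\A^{(k_1+1)(k_2+1)}$, or $\|f_A\nu\|_{\Box(\VE^4\lm)}$ is large, in which case an inverse theorem for the box norm (Theorem \ref{InvThm}) produces a density increment of $A$ on a product $B_1'\times B_2'$ inside small cubes. Because $B_1',B_2'$ need not themselves be uniformly distributed, a regularity lemma adapted to the sequence of scales (Theorem \ref{reglemma}) is then required before the increment can be iterated; the iteration terminates after $J(\A)$ steps, and the threshold $\lm_0$ comes out of an indirect argument over the lacunary sequence. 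The relative von Neumann inequality itself (Lemmas \ref{GvN11} and \ref{relative}) is where the dimension loss occurs: two extra applications of Cauchy--Schwarz each add a vertex to the simplex, forcing $d_i\geq k_i+3$ and hence $d\geq k_1+k_2+6$ --- not, as you suggest, the boundedness of a spherical averaging or restriction operator. Finally, Theorem \ref{Rect} is not recovered as the $k_1=k_2=1$ case with $d=4$; it is proved separately (Propositions \ref{Propn1} and \ref{part1}), where the relative von Neumann step does not need uniform distribution of $B_1,B_2$ and no dimensions are lost.
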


It will be clear from the proofs of Theorems \ref{ProdSimp} and \ref{Rect} that if $1=k_1<k_2$, then the conclusion of Theorem \ref{ProdSimp} will in fact hold under the weaker hypothesis that $d\geq k_1+k_2+4$.

Note further that if $A$ were a direct product set $B_1\times B_2\subseteq \R^{d_1}\times \R^{d_2}$ with each $d_i\geq k_i+1$, then the conclusion of Theorem \ref{ProdSimp}  (which contains the conclusion of Theorem \ref{Rect}  when each $k_i=1$) would follow immediately from  Theorem \ref{BourSimp} and under the
 weaker hypothesis that $d\geq k_1+k_2+2$.



The natural extension of Theorems \ref{Rect} and  \ref{ProdSimp} to $\ell$-dimensional rectangles and $\ell$-fold products of simplices (with $\ell>2$) also holds, but as the arguments involved in establishing these results are significantly more technical than those needed for Theorems \ref{Rect} and \ref{ProdSimp} we plan to address this in a separate article.

\subsection{Outline of Paper}

Our approach to proving Theorems \ref{Rect} and \ref{ProdSimp} will be to reduce them to quantitative results in the compact setting of $[0,1]^{d_1}\times[0,1]^{d_2}$, namely Propositions \ref{Propn1} and \ref{Propn11}. These reductions are carried out in Section \ref{red} with the remainder of Section \ref{4} and the entirety of
Sections \ref{SecPart1}-\ref{SecPart2} then devoted to establishing Propositions \ref{Propn1} and \ref{Propn11}.

In Section \ref{newdistances} we present a new direct proof of Theorem \ref{BourSimp} when $k=1$ and two new proofs of  Theorem \ref{BourSimp}, in its full generality, are  presented in Section \ref{newsimplices}. In both cases our novel approach will be to first reduce matters to results for suitably uniformly distributed subsets of $[0,1]^d$.

\comment{
Our approach to proving both Theorem \ref{BourSimp} and Theorem \ref{ProdSimp} will be to reduce them to results in the compact setting of $[0,1]^d$ and $[0,1]^{d_1}\times[0,1]^{d_2}$ respectively, namely Proposition \ref{Propn00} and Proposition \ref{Propn1}. This reduction is carried out in Section \ref{red} below,
the proof of Proposition \ref{Propn00} is carried out in Sections \ref{new2}, and the remainder of the article, namely Sections \ref{SecPart1}-\ref{reglemproof}, is devoted to establishing Proposition \ref{Propn11}.
}




\section{Uniformly Distributed Subsets of $\R^d$ and a New Proof of Theorem \ref{BourSimp} when $k=1$}\label{newdistances}

In this section we introduce a precise notion of uniform distribution for subsets of $\R^d$ and prove an (optimal) result, Proposition \ref{Propn0} below, on distances in uniformly distributed subsets of $[0,1]^d$. Proposition \ref{Propn0} will be critically important in our proof of Proposition \ref{Propn1}, but as we shall see below it also immediately implies Theorem \ref{BourSimp} when $k=1$ and hence provides a new direct proof of the following

\begin{thm}[Katznelson and Weiss \cite{FKW}]\label{KW}
If $A\subseteq\R^d$ has positive upper Banach density and $d\geq 2$, then
 there exists a threshold $\lm_0=\lm_0(A)$  such that for all $\lambda\geq \lm_0$ there exist a pair of points
\bee\{x,x'\}\subseteq A\quad\text{with}\quad |x-x'|=\lm.\eee
\end{thm}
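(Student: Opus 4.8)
The strategy is the standard reduction from the density statement in $\R^d$ to a quantitative statement in the unit cube, followed by an application of Proposition \ref{Propn0}. First I would fix $A\subseteq\R^d$ with $\D^*(A)=\de>0$. By definition of upper Banach density, for every $\eta>0$ and every sufficiently large scale there is a cube $t+Q_N$ with $|A\cap(t+Q_N)|/|Q_N|\geq\de-\eta$. The point is that we want this to hold on a cube whose side length $N$ is large compared to the dilation parameter $\lm$ we are trying to realize; so given $\lm$, we choose $N=N(\lm)$ enormous, extract such a cube, rescale it to $[0,1]^d$ (so distances $\lm$ become distances $\lm/N$, which is tiny), and — crucially — pass to a subset that is \emph{uniformly distributed} at some intermediate scale in the sense introduced in this section. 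This last step is where a pigeonholing/regularization argument is needed: one shows that inside a set of density $\geq\de-\eta$ in $[0,1]^d$ one can find a sub-cube on which the set is uniformly distributed (density close to its mean on all sub-cubes down to some scale) with density still bounded below by roughly $\de$. I would then invoke Proposition \ref{Propn0}, the optimal distance result for uniformly distributed subsets of $[0,1]^d$, to conclude that all distances in some interval $[c_0,1]$ (with $c_0$ depending only on the density and $d\geq 2$) are realized; unwinding the scaling, this gives all distances $\lm$ with $\lm\geq \lm_0(A)$, since $\lm/N$ can be made to lie in the relevant window by choosing $N$ appropriately.

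More concretely, the key steps in order are: (1) translate/rescale so that we are looking at a set $A'\subseteq[0,1]^d$ with $|A'|\geq\de/2$, say, and we must find two points at distance exactly $\vr$ for a prescribed small $\vr>0$; (2) apply the regularization/pigeonhole lemma to replace $A'$ by a uniformly distributed subset of a sub-cube (rescaled again to $[0,1]^d$) with comparable density, at a scale of uniformity that is still much finer than $\vr$; (3) apply Proposition \ref{Propn0} to this uniformly distributed set to realize the distance $\vr$; (4) track the quantifiers backwards to produce the threshold $\lm_0=\lm_0(A)$. A minor technical point is that Proposition \ref{Propn0} presumably realizes all distances in a fixed sub-interval of $(0,1]$ depending on the density, so in step (1) one must first arrange, by choosing $N$ appropriately relative to $\lm$, that $\vr=\lm/N$ falls in that sub-interval; since $\lm\geq\lm_0$ is allowed to be arbitrarily large, one simply picks $N$ of the same order of magnitude as $\lm$, which is permissible because upper Banach density lets us work at arbitrarily large scales.

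The main obstacle is step (2): establishing that a positive-density subset of the cube contains a sub-cube on which it is uniformly distributed with density bounded below. This is exactly the kind of statement that the section sets up with its "precise notion of uniform distribution," so I expect the paper to either prove a short regularization lemma here or to have arranged the definition so that this is nearly automatic; either way, it is the crux connecting the soft density hypothesis to the hard analytic input of Proposition \ref{Propn0}. The rest — the rescaling bookkeeping and the extraction of a good cube from the definition of $\D^*$ — is routine. I would also remark that since Proposition \ref{Propn0} is asserted to be optimal, no loss in the dimension constraint $d\geq 2$ is incurred, matching the sharpness of the Katznelson–Weiss theorem.
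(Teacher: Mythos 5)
There is a genuine gap, and it sits exactly where you place "the crux": step (2). You propose to extract a cube of density $\geq\delta-\eta$, rescale, and then find a \emph{sub-cube} on which the set is uniformly distributed at a scale much finer than the rescaled target distance $\vr$, with comparable density. No such lemma is available with the quantifiers you need, and it is not what the paper does. An energy-increment/regularization argument does produce sub-cubes on which the set is uniformly distributed, but only at some scale drawn from a prescribed sequence, with the index (hence both the sub-cube side $L_j$ and the uniformity scale $L_{j'}$) coming out of the argument rather than chosen in advance. Since you must realize one \emph{prescribed} distance $\vr$, you would need simultaneously $L_j\gg\vr$ (so that after the second rescaling $\vr/L_j$ is still small) and uniformity at scale about $\varepsilon^4\vr$ (far below $\vr$); you cannot force the prescribed $\vr$ into the window the regularization hands you. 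This is precisely the difficulty that forces the paper, in the harder Theorems \ref{Rect} and \ref{ProdSimp}, to work with a lacunary sequence of scales $\{\lm_j\}$ and argue by contradiction that \emph{some} $\lm_j$ works; if you follow your route for Theorem \ref{KW} you are pushed into that multi-scale contrapositive machinery (essentially Bourgain's original scheme), not a direct proof for each large $\lambda$.

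The paper's actual reduction avoids sub-cube selection entirely, and the idea you are missing is the one-sided use of \emph{maximality} of the Banach density. Given $\varepsilon$, the definition \eqref{BD} gives (i) an $M_0$ such that every cube of side $\geq M_0$ carries density at most $(1+\varepsilon^4/3)\,\delta^*(A)$, and (ii) arbitrarily large $N$ and a cube $t_0+Q_N$ carrying density at least $(1-\varepsilon^4/3)\,\delta^*(A)$. For $\lambda\geq\varepsilon^{-4}M_0$ one takes $N\geq\varepsilon^{-4}\lambda$; then on the rescaled set $A'\subseteq[0,1]^d$ the density in \emph{every} cube of side $\varepsilon^4\lambda/N$ is at most $(1+\varepsilon^4)|A'|$. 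Since the average over translates of these small-cube densities equals $|A'|$ up to a boundary error, a Markov-type argument shows that only an $O(\varepsilon^2)$ measure of translates can have density below $(1-\varepsilon^2)|A'|$, and hence $A'$ is $(\varepsilon,\varepsilon^4\lambda')$-uniformly distributed at the rescaled target scale $\lambda'=\lambda/N$; Proposition \ref{Propn0} (with $c=1$) then finishes. So the uniform distribution is not obtained by passing to a subset or a sub-cube with "comparable" density: it is forced on the whole chosen cube because its density is nearly extremal, so the upper bound at all small scales and the lower bound at the big scale pinch the small-scale densities around their mean. Your outline, which only uses density $\geq\delta-\eta$ and hopes for a regularization step, does not exploit this and therefore does not close.
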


\subsection{Uniform Distribution and Distances}

\begin{defn}[$(\VE,L)$-uniform distribution]
Let $0<L\leq\VE\ll1$ and $Q_L=[-L/2,L/2]^d$.

A set $A\subseteq[0,1]^d$ is said to be $(\VE,L)$-uniformly distributed if
\be\label{3.3.1}
\int_{[0,1]^d}\left|\frac{|A\cap(t+Q_{L})|}{|Q_{L}|}-|A|\,\right|^2\,dt\leq\VE^2.
\ee
\end{defn}

\begin{propn}[Distances in uniformly distributed sets]\label{Propn0}
Let $0<c\leq1$, $0<\lm\leq\VE\ll 1$ and $d\geq2$.

If $A\subseteq[0,1]^d$
is $(\VE,\VE^4\lm)$-uniformly distributed  with $\A=|A|>0$
, then there exist a pair of points
\bee\{x,x'\}\subseteq A\quad\text{with}\quad |x-x'|=c\lm.\eee

In fact,
\bee
\iint1_A(x)1_A(x-c\lm x_1)\,d\sigma(x_1)\,dx=\A^2 +O(c^{-1/6}\VE^{2/3}).
\eee
where $\sigma$ denotes the normalized measure on the sphere $\{x\in\R^d\,:\,|x|=1\}$ induced by Lebesgue measure.
\end{propn}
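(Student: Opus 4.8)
The plan is to establish the asymptotic formula
\[
\iint 1_A(x)\,1_A(x-c\lm x_1)\,d\sigma(x_1)\,dx = \A^2 + O(c^{-1/6}\VE^{2/3}),
\]
since the positivity of the main term $\A^2$ together with a sufficiently small error term forces the integral to be positive, which in turn guarantees a pair of points $\{x,x'\}\subseteq A$ with $|x-x'|=c\lm$. The strategy is the standard one of comparing the spherical averaging operator to a local (ball) averaging operator at scale $\rho = \VE^4\lm$, where we have quantitative control from the $(\VE,\VE^4\lm)$-uniform distribution hypothesis. First I would introduce $\mu_\rho$, the normalized indicator of the cube (or ball) $Q_\rho$ of side $\rho$, and write
\[
\iint 1_A(x)\,1_A(x-c\lm x_1)\,d\sigma(x_1)\,dx = \langle 1_A,\ 1_A * (\sigma_{c\lm} * \mu_\rho * \tilde\mu_\rho)\rangle + (\text{error}),
\]
where $\sigma_{c\lm}$ is the dilated spherical measure; the point is that convolving with $\mu_\rho$ twice costs little because $\rho = \VE^4 \lm$ is tiny compared to the radius $c\lm$ of the sphere (here one uses $c \le 1$ so $c\lm \le \lm$, and $\rho/\lm = \VE^4$), while it allows us to exploit (\ref{3.3.1}).

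The argument then splits into two estimates, carried out on the Fourier side. Write $m(\xi) = \widehat{\sigma}(c\lm\xi)$ for the multiplier of the spherical average. The \textbf{low-frequency} contribution, $|\xi| \lesssim 1/(c\lm\VE^{\alpha})$ for a suitable $\alpha$, is handled by noting that $\widehat\sigma(\eta) = 1 + O(|\eta|^2)$ near the origin, so on this range $m(\xi)$ is close to $1$; after subtracting the mean this produces the main term $\A^2$ plus an acceptable error. The \textbf{high-frequency} contribution uses the classical decay estimate $|\widehat\sigma(\eta)| \lesssim |\eta|^{-(d-1)/2} \le |\eta|^{-1/2}$ (valid since $d \ge 2$), combined with the fact that the twice-mollified uniform-distribution hypothesis controls $\|1_A * \mu_\rho - \A\|_2 \le \VE$; on the frequencies $|\xi| \gtrsim 1/\rho = 1/(\VE^4\lm)$ the mollifier $\widehat{\mu_\rho}$ itself is small, and on the intermediate band $1/(c\lm\VE^\alpha) \lesssim |\xi| \lesssim 1/(\VE^4\lm)$ one pays $|\widehat\sigma(c\lm\xi)| \lesssim (c\lm|\xi|)^{-1/2} \lesssim (\VE^{-\alpha})^{-1/2} = \VE^{\alpha/2}$ uniformly, times $\|1_A\|_2^2 \le 1$. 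Optimizing the split point $\alpha$ against the low-frequency error $O((c\lm |\xi|_{\max})^2) \sim O(\VE^{-2\alpha}\cdot(\text{something}))$ — more precisely balancing the two genuine error contributions — is what produces the exponent $2/3$ and the factor $c^{-1/6}$; I would expect the choice to come out near $\alpha \sim 4/3$ so that $\VE^{\alpha/2} \sim \VE^{2/3}$, with the $c$-dependence tracked honestly through the dilation.

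The main obstacle, and the step deserving the most care, is the passage from the bare spherical integral to the mollified version: one must bound the difference
\[
\left| \iint 1_A(x)\bigl(1_A(x - c\lm x_1) - (1_A * \mu_\rho)(x - c\lm x_1)\bigr)\,d\sigma(x_1)\,dx \right|
\]
without any smoothness on $1_A$. This is done not by a pointwise estimate but by moving everything onto the sphere: translate the inner function along $x_1$, use Fubini and Cauchy--Schwarz in $x$, and observe that $\| 1_A(\cdot - c\lm x_1) - (1_A*\mu_\rho)(\cdot - c\lm x_1)\|_2$ is independent of $x_1$ and equals $\|1_A - 1_A * \mu_\rho\|_2$, which by the triangle inequality and (\ref{3.3.1}) is at most... actually one needs the slightly subtler point that $\|1_A - 1_A*\mu_\rho\|_2 \le \VE$ is \emph{exactly} what $(\VE,\rho)$-uniform distribution gives after recognizing $(1_A*\mu_\rho)(t)$ as the local density $|A\cap(t+Q_\rho)|/|Q_\rho|$, modulo boundary effects from working on $[0,1]^d$ versus $\R^d$ which are absorbed since $\rho \ll 1$. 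Getting the boundary/truncation bookkeeping clean, and making sure the $\mu_\rho$-smoothing is applied on the correct side so that the hypothesis applies verbatim, is the delicate part; everything else is Plancherel plus the two stationary-phase facts about $\widehat\sigma$.
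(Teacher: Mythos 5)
Your overall strategy --- compare the spherical average at radius $c\lm$ with a version mollified at scale $\rho=\VE^4\lm$, use the decay $|\widehat\sigma(\eta)|\lesssim|\eta|^{-(d-1)/2}\le|\eta|^{-1/2}$ on intermediate frequencies, and feed in the $(\VE,\VE^4\lm)$-uniform distribution in its $L^2$ form for the mollified piece --- is the same Fourier-analytic mechanism as the paper's, where it is packaged as Lemma \ref{GvN0} (a generalized von Neumann inequality in the $U^1(\VE^4\lm)$-norm) applied to $f_A=1_A-\A 1_{[0,1]^d}$. However, the step you yourself single out as the main obstacle --- the passage from the bare to the mollified spherical integral --- is handled incorrectly, and as written it fails. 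First, $(\VE,\rho)$-uniform distribution controls $\|1_A*\mu_\rho-\A\|_{L^2([0,1]^d)}\lesssim\VE$; it does \emph{not} give $\|1_A-1_A*\mu_\rho\|_2\le\VE$. Indeed, precisely because $1_A*\mu_\rho$ is $L^2$-close to the constant $\A$, one has
\bee
\|1_A-1_A*\mu_\rho\|_2^2=\A-2\langle 1_A,1_A*\mu_\rho\rangle+\|1_A*\mu_\rho\|_2^2=\A(1-\A)+O(\VE),
\eee
which is generically of order one: an indicator function is never $L^2$-close to its own mollification unless $\A$ is near $0$ or $1$. Second, applying Cauchy--Schwarz in $x$ for each fixed $x_1$ (so that the spherical integration only acts on norms that are independent of $x_1$) discards exactly the cancellation that makes this difference small; the smallness is not a property of $1_A-1_A*\mu_\rho$ alone but of its convolution with the dilated spherical measure. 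A related inconsistency appears in your band decomposition: on the range $|\xi|\lesssim \VE^{-\alpha}/(c\lm)$ the multiplier $\widehat\sigma(c\lm\xi)$ is not close to $1$ (that approximation needs $c\lm|\xi|\ll1$), so on the frequencies between $1/(c\lm)$ and the start of your intermediate band neither of your two mechanisms applies --- only the uniform-distribution hypothesis does.

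The repair is exactly the paper's move: keep $\widehat\sigma$ inside the estimate. By Plancherel the bare-minus-mollified difference is at most $\int|\widehat{1_A}(\xi)|^2\,|\widehat\sigma(c\lm\xi)|\,|1-\widehat\psi(\VE^4\lm\xi)|\,d\xi$ with $\psi_L=L^{-2d}1_{Q_L}*1_{Q_L}$ (your $\mu_\rho*\tilde\mu_\rho$), and the pointwise bound
\bee
|\widehat\sigma(c\lm\xi)|\,|1-\widehat\psi(\VE^4\lm\xi)|\lesssim\min\{\VE^4\lm|\xi|,\,(c\lm|\xi|)^{-1/2}\}\le c^{-1/3}\VE^{4/3},
\eee
together with $\|1_A\|_2^2\le1$, bounds it by $O(c^{-1/3}\VE^{4/3})$ uniformly --- no three-band split or choice of $\alpha$ is needed, since the single decomposition $1=\widehat\psi+(1-\widehat\psi)$ with $0\le\widehat\psi\le1$ handles all frequencies at once. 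The surviving term $\int|\widehat{f}|^2\widehat\psi(\VE^4\lm\xi)\,d\xi$ is $\|f*\mu_\rho\|_2^2$ up to $O(\VE^4\lm)$, which is where the hypothesis enters (giving $O(\VE^2)$ for $f=f_A$); the main term $\A^2+O(c\lm)$ comes from the mean, and the exponents $c^{-1/6}\VE^{2/3}$ arise from the square root taken after Cauchy--Schwarz in the cross terms. With your third-paragraph step replaced by this argument, your proposal reproduces the paper's proof.
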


Before proving Proposition \ref{Propn0} we will first show that when $c=1$ it immediately implies Theorem \ref{KW}. To the best of our knowledge this observation, which gives a direct proof of Theorem \ref{KW}, is new.

\subsection{Proof that Proposition \ref{Propn0} implies Theorem \ref{KW}}\label{newreduction}
Let $\VE>0$ and $A\subseteq\R^d$  with $\D^*(A)>0$.\smallskip
\comment{Recall that the \emph{upper Banach density} $\D^*$ of a measurable set $A\subseteq\R^d$ is defined by
\be\label{BD}
\D^*(A)=\lim_{N\rightarrow\infty}\sup_{x\in\R^d}\frac{|A\cap(x+Q_N)|}{|Q_N|},\ee
where $|\cdot|$ denotes Lebesgue measure on $\R^d$ and $Q_N$ denotes the cube $[-N/2,N/2]^d$.}

 The following two facts follow immediately from the definition of upper Banach density, see (\ref{BD}):
\begin{itemize}
\item[(i)] There exist $M_0=M_0(A,\VE)$ such that for all $M\geq M_0$ and all $t\in\R^d$
\[\frac{|A\cap(t+Q_M)|}{|Q_M|}\leq(1+\VE^4/3)\,\D^*(A).\]
\item[(ii)] There exist arbitrarily large $N\in\R$ such that
\[\frac{|A\cap(t_0+Q_N)|}{|Q_N|}\geq(1-\VE^4/3)\,\D^*(A)\]
for some $t_0\in\R^d$.
\end{itemize}

Combining (i) and (ii) above we see that for any $\lm\geq \VE^{-4}M_0$, there exist $N\geq\VE^{-4}\lm$ and $t_0\in\R^d$ such that
\bee
\frac{|A\cap(t+Q_{\VE^4\lm})|}{|Q_{\VE^4\lm}|}\leq(1+\VE^4)\frac{|A\cap(t_0+Q_N)|}{|Q_N|}
\eee
for all $t\in\R^d$.
Consequently, Theorem \ref{KW} reduces, via a rescaling of $A\cap(t_0+Q_N)$ to a subset of $[0,1]^d$,
to establishing that if
$0<\lm\leq\VE\ll1$ and $A\subseteq[0,1]^d$ is measurable with $|A|>0$ and the property  that
\bee
\frac{|A\cap(t+Q_{\VE^4\lm})|}{|Q_{\VE^4\lm}|}\leq (1+\VE^4)\,|A|
\eee
for all $t\in\R^d$,  then there exist a pair of points $x,x'\in A$ such that
$
|x-x'|=\lm$.
Now since $A\cap(t+Q_{\VE^4\lm})$ is only supported in $[-\VE^4\lm,1+\VE^4\lm]^d$ it follows that
\be\label{Q-QL}
|A|=\int_{\R^d}\frac{|A\cap(t+Q_{\VE^4\lm})|}{|Q_{\VE^4\lm}|}\,dt=\int_{[0,1]^d}\frac{|A\cap(t+Q_{\VE^4\lm})|}{|Q_{\VE^4\lm}|}\,dt+O(\VE^4|A|),
\ee
from which one can easily deduce that
\be\label{most t's}
\Bigl|\Bigl\{t\in[0,1]^d\,:\, \frac{|A\cap(t+Q_{\VE^4\lm})|}{|Q_{\VE^4\lm}|}\leq (1-\VE^2)\,|A|\Bigr\}\Bigr|=O(\VE^2)
\ee
and hence that $A$ is $(\VE,\VE^4\lm)$-uniformly distributed. The result therefore follows, provided $d\geq 2$. \qed

\subsection{Proof of Proposition \ref{Propn0}}
\begin{defn}[Counting Function for Distances]

For $0<\lm\ll1$ and functions
\[f_0,f_{1}:[0,1]^{d}\to\R\] with $d\geq 2$ we define
\be
T(f_0,f_1)(\lm)= \iint f_0(x)f_1(x-\lm x_1)\,d\sigma(x_1) \,dx.
\ee
\end{defn}

\begin{defn}[$U^1(L)$-norm]
For $0<L\ll1$ and functions $f:[0,1]^d\to\R$ we define
\be
\|f\|_{U^1(L)}^2=\int\limits_{[0,1]^d}\Bigl|\frac{1}{L^d}\int\limits_{t+Q_L}f(x)\,dx\Bigr|^2 \,dt
=\int\limits_{[0,1]^d}\biggl(\frac{1}{L^{2d}}\iint\limits_{x,x'\in t+Q_L}f(x)f(x')\,dx'\,dx\biggr)dt
\ee
where $Q_L=[-L/2,L/2]^d$.
\end{defn}

It is  an easy, but important, observation that
\be\label{almostU1}
\|f\|_{U^1(L)}^2=\iint f(x)f(x-x_1)\psi_L(x_1)\,dx_1\,dx +O(L),
\ee
where
$\psi_L=L^{-2d}\,1_{Q_L}*1_{Q_L}$.
Note also that if  $A\subseteq[0,1]^d$ with $\A=|A|>0$ and we define
\bee
f_A:=1_A-\A1_{[0,1]^d}
\eee
then
\be\label{relate}
\int\limits_{[0,1]^d}\Bigl|\frac{1}{L^d}\int\limits_{t+Q_L}f_A(x)\,dx\Bigr|^2 \,dt=\int\limits_{[0,1]^d}\left|\frac{|A\cap(t+Q_{L})|}{|Q_{L}|}-|A|\,\right|^2\,dt+O(L).
\ee

Evidently the $U^1(L)$-norm is measuring the mean-square uniform distribution of $A$ on scale $L$. Specifically if $A$ is $(\VE,L)$-uniformly distributed, then $\|f_A\|_{U^1(L)}\leq2\VE$  provided $0<L\ll\VE$.

\smallskip

At the heart of this short proof of Proposition \ref{Propn0} is the following ``generalized von-Neumann inequality''.

\begin{lem}[Generalized von-Neumann for Distances]\label{GvN0}
For any $c>0$,
 $0<\VE,\lm\ll\min\{1,c^{-1}\}$ and functions
\[f_0,f_1:[0,1]^{d}\to[-1,1]\]
with $d\geq2$ we have
\bee
\left|T(f_0,f_1)(c\lm)\right|\leq\prod_{j=0,1}\|f_j\|_{U^1(\VE^4\lm)}+O(c^{-1/6}\VE^{2/3}).
\eee
\end{lem}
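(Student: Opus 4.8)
The plan is to run a single Cauchy--Schwarz argument in the variable that the sphere measure $\sigma$ ``smooths'' least, converting the bilinear average $T(f_0,f_1)(c\lm)$ into an expression controlled by the $U^1$-norms on scale $\VE^4\lm$ plus an error coming from the oscillation of the spherical measure at that scale. First I would fix a bump function $\phi$ supported in $Q_1=[-1/2,1/2]^d$ with $\int\phi=1$, set $\phi_\delta(x)=\delta^{-d}\phi(x/\delta)$ with $\delta=\VE^4\lm$, and compare $T(f_0,f_1)(c\lm)$ with the mollified quantity
\bee
T_\delta:=\iint f_0(x)\,(f_1*\phi_\delta)(x-c\lm x_1)\,d\sigma(x_1)\,dx .
\eee
The difference $T(f_0,f_1)(c\lm)-T_\delta$ is, by Minkowski's inequality, bounded by $\sup_{|h|\le\delta}\|f_1(\cdot)-f_1(\cdot-h)\|$ tested against $1_{[0,1]^d}*\sigma_{c\lm}$; since $f_1$ is merely bounded this crude bound is not good enough, so instead I would estimate this difference on the Fourier side: writing $\widehat{\sigma}(\xi)=O((1+|\xi|)^{-(d-1)/2})$ and using $|\widehat{\phi_\delta}(\xi)-1|\lesssim\min\{1,\delta|\xi|\}$, the difference becomes
\bee
\int \widehat{f_0}(-\xi)\,\widehat{f_1}(\xi)\,\widehat{\sigma}(c\lm\xi)\,(\widehat{\phi_\delta}(\xi)-1)\,d\xi ,
\eee
and splitting at $|\xi|\sim (c\lm)^{-1}$ and at $|\xi|\sim\delta^{-1}$, together with Plancherel and $\|f_j\|_2\le1$, yields a bound of the shape $O((c\lm/\delta)^{\theta})$ for a suitable $\theta$; optimizing the way $\delta$ sits between $c\lm$ and the relevant frequency windows is exactly what produces the exponent $c^{-1/6}\VE^{2/3}$. (Here I expect that the exponent $4$ in $\VE^4\lm$ and the target error $c^{-1/6}\VE^{2/3}$ are matched precisely so that this stationary-phase/Plancherel trade-off closes; getting the arithmetic of these exponents right is the first place to be careful.)

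Next, having replaced $f_1$ by its mollification on scale $\delta=\VE^4\lm$, I would exploit that $f_1*\phi_\delta$ is an average of translates of $f_1$ over $Q_\delta$, and more usefully that a further convolution $f_1*\psi_\delta$ with $\psi_\delta=\delta^{-2d}1_{Q_\delta}*1_{Q_\delta}$ (the kernel appearing in \eqref{almostU1}) is, by \eqref{almostU1} and Cauchy--Schwarz in $x$, controlled pointwise-in-$L^2$-sense by $\|f_1\|_{U^1(\delta)}$. Concretely, after the mollification step the quantity is of the form $\iint f_0(x)\,g(x-c\lm x_1)\,d\sigma(x_1)\,dx$ with $g$ already smoothed on scale $\delta$; I would then apply Cauchy--Schwarz in $x$ to peel off $f_0$ (bounded by $1$ in $L^2([0,1]^d)$) and, after that, Cauchy--Schwarz again in the remaining integral to land on $\iint g(y)g(y-c\lm(x_1-x_1'))\,d\sigma(x_1)\,d\sigma(x_1')\,dy$. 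The inner convolution $\sigma_{c\lm}*\sigma_{c\lm}$ of two spheres of radius $c\lm$ has an $L^\infty$ density that is $O((c\lm)^{-1})$ away from the origin and is otherwise an $L^1$-normalized kernel spread over a ball of radius $\sim c\lm$; since $g$ is already constant at scale $\delta\ll c\lm$, this kernel can be compared to $\psi_\delta$ (or to a fixed bump on scale $c\lm$ which dominates $\psi_\delta$ after renormalization) at the cost of another error of the same $O(c^{-1/6}\VE^{2/3})$ type, and what survives is exactly $\|g\|_{U^1(\delta)}^2\le\|f_1\|_{U^1(\delta)}^2$ by Young/Jensen since mollification does not increase the $U^1$-norm. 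By symmetry the same manipulation applied to $f_0$ gives the factor $\|f_0\|_{U^1(\delta)}$, and multiplying the two ``half-arguments'' (or, more cleanly, running Cauchy--Schwarz once to get $\|f_1\|_{U^1}$ and then a second time on the transpose to get $\|f_0\|_{U^1}$) yields the product $\prod_{j}\|f_j\|_{U^1(\VE^4\lm)}$ in the main term.

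The main obstacle, and the step I would spend the most care on, is the quantitative comparison between the spherical-average kernels (namely $\widehat{\sigma}(c\lm\xi)$, and the self-convolution $\sigma_{c\lm}*\sigma_{c\lm}$) and the cube-based kernel $\psi_{\VE^4\lm}$ defining the $U^1$-norm: one must show that replacing one by the other costs only $O(c^{-1/6}\VE^{2/3})$, uniformly over $f_0,f_1$ bounded by $1$, and this is where the decay rate $(d-1)/2\ge 1/2$ of $\widehat\sigma$ (valid for $d\ge2$) is used and where the precise power $1/6$ of $c^{-1}$ and $2/3$ of $\VE$ come from balancing a high-frequency Plancherel term of size $(c\lm|\xi|)^{-1/2}$ against a low-frequency term of size $\delta|\xi|=\VE^4\lm|\xi|$ at the crossover frequency. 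Everything else — the two Cauchy--Schwarz applications, the identity \eqref{almostU1}, the fact that convolution does not increase $\|\cdot\|_{U^1(L)}$, and the harmless $O(L)$ errors — is routine once this kernel-comparison lemma is in hand.
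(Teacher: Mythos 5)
Your quantitative inputs coincide with the paper's: the decay $|\widehat{\sigma}(\xi)|\lesssim\min\{1,|\xi|^{-(d-1)/2}\}$, the bound $|1-\widehat{\psi}(\VE^4\lm\xi)|\lesssim\min\{1,\VE^4\lm|\xi|\}$, Parseval with $\|f_j\|_2\le1$, the identity \eqref{almostU1}, and the crossover $\min\{\VE^4\lm|\xi|,(c\lm|\xi|)^{-1/2}\}\le c^{-1/3}\VE^{4/3}$ whose square root yields $c^{-1/6}\VE^{2/3}$. With these, your mollification step and your Fourier-side kernel comparison can be made rigorous, though they are a detour: the paper gets there in one stroke by applying Parseval and then Cauchy--Schwarz \emph{in the frequency variable}, obtaining $|T(f_0,f_1)(c\lm)|^2\le\prod_{j}\int|\widehat{f_j}(\xi)|^2|\widehat{\sigma}(c\lm\xi)|\,d\xi$, and then using the pointwise bound $|\widehat{\sigma}(c\lm\xi)|\le\widehat{\psi}(\VE^4\lm\xi)+O(\min\{\VE^4\lm|\xi|,(c\lm|\xi|)^{-1/2}\})$; no mollification of $f_1$ is needed.

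The genuine gap is the final step, where you claim the product $\prod_{j}\|f_j\|_{U^1(\VE^4\lm)}$. Your Cauchy--Schwarz is run in physical space: $f_0$ is peeled off using only $\|f_0\|_2\le1$, so each pass of your argument can only produce a bound of the form $|T|\le\|f_1\|_{U^1(\VE^4\lm)}+O(c^{-1/6}\VE^{2/3})$ (and, by symmetry, the same with $f_0$). Multiplying these two ``half-arguments'' gives at best the geometric mean, i.e.\ essentially $\min_j\|f_j\|_{U^1(\VE^4\lm)}$ up to errors --- not the product, which is the \emph{smaller} quantity since both norms are $O(1)$. Obtaining the product requires splitting the weight $|\widehat{\sigma}(c\lm\xi)|$ between the two functions, which is exactly what the paper's single frequency-side Cauchy--Schwarz accomplishes; the ``transpose'' remark does not supply this. (For the applications in the paper the min-form would in fact suffice, cf.\ Lemma \ref{GvN00}, but it does not prove the inequality as stated.) A secondary issue: the physical-space comparison of $\sigma_{c\lm}*\sigma_{c\lm}$ with $\psi_{\VE^4\lm}$ ``because $g$ is constant at scale $\delta$'' is not a valid domination --- that kernel is spread over a ball of radius $\sim c\lm\gg\VE^4\lm$, and taken literally (dominating by a bump at scale $c\lm$) it would produce $\|\cdot\|_{U^1(c\lm)}$ rather than $\|\cdot\|_{U^1(\VE^4\lm)}$; the comparison must be done on the Fourier side, using $0\le\widehat{\psi}\le1$ and the decay of $\widehat{\sigma}$, as your final paragraph does indicate.
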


Indeed, if  $A\subseteq[0,1]^d$ with $d\geq 2$ and $\A=|A|>0$, then Lemma \ref{GvN0} implies that
\[\left|T(1_A,1_A)(c\lm)-T(\A1_{[0,1]^d},\A1_{[0,1]^d})(c\lm)\right|\leq 3\,\|f_A\|_{U^1(\VE^4\lm)}+O(c^{-1/6}\VE^{2/3})\]
for any $0<c\leq1$ and $0<\VE,\lm \ll1$.  Since
$T(\A1_{[0,1]^d},\A1_{[0,1]^d})(c\lm)=\A^{2}+O(c\lm)$ it follows that
\bee
T(1_A,1_A)(c\lm)=\A^{2}+O(c^{-1/6}\VE^{2/3})\eee
provided $0<\lm\leq\VE\ll1$.
\\

To finish the proof of Proposition \ref{Propn0} we are therefore left with the task of proving Lemma \ref{GvN0}.

\begin{proof}[Proof of Lemma \ref{GvN0}]
An application of Parseval followed by Cauchy-Schwarz implies that
\begin{align*}
T(f_{0},f_{1})(c\lm)^2&=\Bigl(\iint f_0(x)f_1(x-c\lm x_1)\,d\sigma(x_1) \,dx\Bigr)^2\\
&\leq \Bigl(\int_{\R^d} |\widehat{f_0}(\xi)||\widehat{f_1}(\xi)||\widehat{\sigma}(c\lm\xi)|\,d\xi\Bigr)^2\\
&\leq \prod_{j=0,1}\int_{\R^d} |\widehat{f_j}(\xi)|^2|\widehat{\sigma}(c\lm\xi)|\,d\xi
\end{align*}
where
\bee
\widehat{\mu}(\xi)=\int_{\R^d}e^{-2\pi i x\cdot\xi}\,d\mu(x)\eee
denotes the Fourier transform of any complex-valued Borel measure $d\mu$ and $\widehat{g}(\xi)$ is the Fourier transform of the measure $d\mu=g\,dx$. Combining the basic fact (see for example \cite{Stein}) that
\bee
|\widehat{\sigma}(\xi)|\leq\min\{1,C|\xi|^{-(d-1)/2}\}
\eee
with the simple observation that $|1-\widehat{\psi}(\xi)|\leq\min\{1,C|\xi|\}$ gives
\bee
|\widehat{\sigma}(c\lm\xi)|=|\widehat{\sigma}(c\lm\xi)|\widehat{\psi}(\VE^4\lm\xi)+|\widehat{\sigma}(c\lm\xi)|(1-\widehat{\psi}(\VE^4\lm\xi))\leq \widehat{\psi}(\VE^4\lm\xi)+O(\min\{\VE^4\lm|\xi|,(c\lm|\xi|)^{-1/2}\}).
\eee

The result now follows, since $\|f_j\|_2^2\leq1$,
\bee
\min\{\VE^4\lm|\xi|,(c\lm|\xi|)^{-1/2}\}\leq c^{-1/3}\VE^{4/3}
\eee
 and a further application of Parseval (and appeal to (\ref{almostU1})) reveals that
\bee
\int |\widehat{f_j}(\xi)|^2\widehat{\psi}(\VE^4\lm\xi)\,d\xi=\iint f_j(x)f_j(x-x_1)\psi_{\VE^4\lm}(x_1)\,dx_1\,dx=\|f_j\|_{U^1(\VE^4\lm)}^2+O(\VE^4\lm).\qedhere
\eee
\end{proof}


\section{A New Proof of Theorem \ref{BourSimp}}\label{newsimplices}

In light of the reduction argument presented in Section \ref{newreduction} it is clear that in order to prove Theorem \ref{BourSimp} it would suffice to establish the following result for uniformly distributed subsets of $[0,1]^d$.

\begin{propn}[Simplices in uniformly distributed sets]\label{Propn00}
Let $\Delta_k=\{0,v_1,\dots,v_k\}$ be a fixed non-degenerate $k$-dimensional simplex with $c_{\Delta_k}=\min_{1\leq j\leq k}\text{\emph{dist}}(v_j,\text{\emph{span}}\left\{\{v_1,\dots,v_{k}\}\setminus v_j\right\})\leq 1.$

Let $0<\lm\leq\VE\ll \min\{1,c_{\Delta_k}^{-1}\}$
 and $A\subseteq[0,1]^d$ with $d\geq k+1$ and  $\A=|A|>0$.
If $A$ is $(\VE,\VE^4\lm)$-uniformly distributed, then
$A$ contains an isometric copy of $\lm\cdot\Delta_k$ and in fact
\be\label{intcount}
\iint 1_A(x)1_A(x-\lm \cdot U(v_1))\cdots 1_A(x-\lm\cdot U(v_k))\,d\mu(U) \,dx=\A^{k+1}+O_k(c_{\Delta_k}^{-1/6}\VE^{2/3})
\ee
where $\mu$ denotes the Haar measure on $SO(d)$.\end{propn}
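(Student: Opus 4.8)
The plan is to mimic the one-dimensional argument from Section \ref{newdistances}, replacing the single spherical average by an average over the manifold of isometric copies of $\Delta_k$, and proving a corresponding generalized von-Neumann inequality. First I would introduce a counting operator $T(f_0,\dots,f_k)(\lm)=\iint f_0(x)f_1(x-\lm U(v_1))\cdots f_k(x-\lm U(v_k))\,d\mu(U)\,dx$, where $\mu$ is Haar measure on $SO(d)$; equivalently one can parametrize the copies of $\lm\cdot\Delta_k$ by a measure $\sigma_{\Delta_k}^\lm$ on $(\R^d)^k$ supported on $k$-tuples $(y_1,\dots,y_k)$ with $|y_i|=\lm|v_i|$ and $y_i\cdot y_j=\lm^2 v_i\cdot v_j$, normalized to be a probability measure, so that $T$ is integration of $f_0(x)\prod f_i(x-y_i)$ against $\sigma_{\Delta_k}^\lm$. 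Splitting each $1_A=\A 1_{[0,1]^d}+f_A$ and expanding multilinearly, the main term is $T(\A 1,\dots,\A 1)(\lm)=\A^{k+1}+O(\lm)$, and Proposition \ref{Propn00} reduces to showing every term containing at least one factor of $f_A$ is $O_k(c_{\Delta_k}^{-1/6}\VE^{2/3})$; since $A$ is $(\VE,\VE^4\lm)$-uniformly distributed we have $\|f_A\|_{U^1(\VE^4\lm)}\le 2\VE$, so it suffices to prove a bound of the shape $|T(f_0,\dots,f_k)|\le \min_{0\le j\le k}\|f_j\|_{U^1(\VE^4\lm)}+O_k(c_{\Delta_k}^{-1/6}\VE^{2/3})$ for $f_j:[0,1]^d\to[-1,1]$.

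The key step is this generalized von-Neumann estimate, which I would prove by induction on the number of ``controlled'' vertices, using a Cauchy--Schwarz in the $y_i$ variables to peel off one factor at a time, exactly as in Bourgain's argument. Concretely, to control the $f_0$ factor one writes $T$ as $\int f_0(x)\,F(x)\,dx$ where $F$ is the average of $\prod_{i\ge1} f_i(x-y_i)$ over the fiber, applies Cauchy--Schwarz in $x$, and then expands $F(x)^2$; the resulting expression involves the self-convolution of the fiber measure, i.e. the distribution of the difference of two independent points of the simplex-copy. The crucial analytic input is a decay bound for the Fourier transform of $\sigma_{\Delta_k}^\lm$: because the relevant manifold (a tower of spheres of dimensions $d-1,d-2,\dots,d-k$, which is non-empty and has positive-dimensional fibers precisely when $d\ge k+1$) has nonzero Gaussian curvature in the appropriate sense, $|\widehat{\sigma_{\Delta_k}^\lm}(\xi)|\le \min\{1, C_k(c_{\Delta_k}\lm|\xi|)^{-(d-k)/2}\}$, with the constant $c_{\Delta_k}$ entering through the volume/curvature of the smallest fiber. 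Then one splits $|\widehat{\sigma}(\xi)|\le\widehat{\psi}(\VE^4\lm\xi)+O(\min\{\VE^4\lm|\xi|,(c_{\Delta_k}\lm|\xi|)^{-1/2}\})$ exactly as in the proof of Lemma \ref{GvN0}, using only the $(d-k)/2\ge 1/2$ power so that the same exponent bookkeeping goes through, and bounds $\min\{\VE^4\lm|\xi|,(c_{\Delta_k}\lm|\xi|)^{-1/2}\}\le c_{\Delta_k}^{-1/3}\VE^{4/3}$; the $\widehat\psi$ term reassembles into $\|f_j\|_{U^1(\VE^4\lm)}^2$ via Parseval and \eqref{almostU1}, while the error is $O_k(c_{\Delta_k}^{-1/6}\VE^{2/3})$ after taking square roots and summing the $O_k(1)$ expansion terms.

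I expect the main obstacle to be making the inductive Cauchy--Schwarz bookkeeping clean: after peeling off $f_0$ one is left not with a single spherical integral but with an integral over pairs $(U,U')\in SO(d)^2$ (equivalently over the difference configuration), and one must verify that the remaining kernel still factors appropriately so that the induction hypothesis on $\Delta_{k-1}$ (or on the shifted configuration) applies, and that each Cauchy--Schwarz step loses only an $O_k(1)$ power of the error terms rather than compounding them badly. Equally delicate is pinning down the curvature/decay constant: one needs the fibered sphere structure to be genuinely non-degenerate, which is where the hypotheses $d\ge k+1$ and $c_{\Delta_k}>0$ are used, and where the exponent $(d-k)/2$ (never worse than the $1/2$ already handled in the $k=1$ case) must be confirmed uniform in $\lm$ down to scale $\VE^4\lm$. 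Once the von-Neumann inequality is in hand, the deduction of \eqref{intcount} is immediate as above, and the reduction of Theorem \ref{BourSimp} to Proposition \ref{Propn00} is the one already carried out in Section \ref{newreduction}.
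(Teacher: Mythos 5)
Your proposal follows essentially the same route as the paper's proof of Proposition \ref{Propn00}: reduce via the multilinear expansion $1_A=\A 1_{[0,1]^d}+f_A$ to a generalized von-Neumann inequality controlled by $\|\cdot\|_{U^1(\VE^4\lm)}$ (Lemma \ref{GvN00}), proved by Bourgain's iterated-sphere parametrization, Cauchy--Schwarz, Plancherel, decay of the fiber sphere measure, and the $\widehat{\psi}(\VE^4\lm\xi)$ splitting with the same exponent bookkeeping. The one imprecision is the isotropic decay bound you assert for the fiber measure: the conditional measure $\sigma^{(d-k)}_{x_1,\dots,x_{k-1}}$ has no Fourier decay in directions along $\mathrm{span}\{x_1,\dots,x_{k-1}\}$, and the bound with exponent $(d-k)/2$ in terms of $c_{\Delta_k}|\xi|$ alone is obtained in the paper only for the configuration-averaged quantity $I(\xi)$ after a conical decomposition (see (\ref{Ibound})) --- which is exactly the uniformity issue you flag as needing verification.
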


Note that Proposition \ref{Propn0} is the special case of Proposition \ref{Propn00} with $k=1$ and $v_1=1$.

\subsection{Proof of Proposition \ref{Propn00}}\label{new2}
Let $\Delta_k=\{0,v_1,\dots,v_k\}$ be a fixed non-degenerate $k$-dimensional simplex with \[c_{\Delta_k}=\min_{1\leq j\leq k}\text{dist}(v_j,\text{span}\left\{\{v_1,\dots,v_{k}\}\setminus v_j\right\})\leq 1.\]

\begin{defn}[Counting Function for Simplices]

For any $0<\lm\ll1$ and functions
\[f_0,f_1,\dots,f_{k}:[0,1]^{d}\to\R\] with $d\geq k+1$ we  define
\be
T_{\Delta_k}(f_0,f_1,\dots,f_{k})(\lm)= \iint f_0(x)f_1(x-\lm \cdot U(v_1))\cdots f_k(x-\lm\cdot U(v_k))\,d\mu(U) \,dx.
\ee

\end{defn}


Proposition \ref{Propn00} is an immediate consequence of  the following ``generalized von-Neumann inequality''.

\begin{lem}[Generalized von-Neumann for Simplices]\label{GvN00}
For any $0<\VE, \lm\ll1$ and functions
\[f_0,f_1,\dots,f_{k}:[0,1]^{d}\to[-1,1]\]
\bee
\left|T_{\Delta_k}(f_0,f_1,\dots,f_{k})(\lm)\right|\leq\min_{j=0,1,\dots,k}\|f_j\|_{U^1(\VE^4\lm)}+O(c_{\Delta_k}^{-1/6}\VE^{2/3}).
\eee
\end{lem}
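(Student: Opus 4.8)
The plan is to bootstrap the $k=1$ case (Lemma \ref{GvN0}) by peeling off one vertex of the simplex at a time, exactly as one does when proving a generalized von-Neumann inequality for linear patterns. Concretely, I would fix the vertex $v_j$ realizing the minimum in the definition of $c_{\Delta_k}$; after relabeling assume $j=k$. The key geometric fact I would exploit is that for a \emph{fixed} orientation of the first $k-1$ edges $\lm\cdot U(v_1),\dots,\lm\cdot U(v_{k-1})$, the set of admissible positions of the last vertex $x-\lm\cdot U(v_k)$, as $U$ ranges over the rotations fixing that partial configuration, traces out a sphere of radius comparable to $\lm\, c_{\Delta_k}$ inside the orthogonal complement of $\mathrm{span}\{U(v_1),\dots,U(v_{k-1})\}$ (this is where $d\geq k+1$ is used, so that this orthogonal complement has dimension $\geq 2$ and the relevant sphere is nondegenerate). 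Writing $U(v_k) = w + r\,\omega$ where $w\in\mathrm{span}\{U(v_1),\dots,U(v_{k-1})\}$ is the fixed component, $r \geq c_{\Delta_k}|v_k|$ is fixed, and $\omega$ ranges over the unit sphere of the orthogonal complement, one can disintegrate the Haar measure $d\mu(U)$ over the partial rotations and the residual spherical variable $\omega$.

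Having set this up, I would bound $|T_{\Delta_k}(f_0,\dots,f_k)(\lm)|$ by freezing $f_0,\dots,f_{k-1}$ and applying the one-variable estimate to the innermost integral in the last vertex. More precisely, the inner integral over $\omega$ has the form $\int f_k(y - \lm r\,\omega)\,d\sigma_{d-k+1}(\omega)$ with $y = x - \lm(\cdots) - \lm w$, which is a spherical average of $f_k$ of radius $\lm r \asymp \lm\, c_{\Delta_k}$ on a sphere of dimension $d-k \geq 1$; applying Cauchy-Schwarz in all the remaining variables and then the Fourier/Parseval argument from the proof of Lemma \ref{GvN0} to this spherical average (the decay $|\widehat{\sigma_{d-k+1}}(\xi)|\leq\min\{1, C|\xi|^{-(d-k)/2}\}$ still holds since $d-k\geq1$, and one splits off $\widehat{\psi}(\VE^4\lm\xi)$ exactly as before) yields the bound $\|f_k\|_{U^1(\VE^4\lm)} + O(c_{\Delta_k}^{-1/6}\VE^{2/3})$, uniformly in the frozen outer variables. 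Since $\|f_j\|_\infty\leq1$ for $j<k$ and the outer measure is a probability measure, integrating back out preserves this bound, giving $|T_{\Delta_k}(f_0,\dots,f_k)(\lm)| \leq \|f_k\|_{U^1(\VE^4\lm)} + O_k(c_{\Delta_k}^{-1/6}\VE^{2/3})$. The same argument run on whichever index achieves the minimum of the $U^1$-norms gives the stated conclusion.

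The step I expect to be the main obstacle is making the disintegration of Haar measure clean and uniform: one must verify that, after conditioning on the orientation of the first $k-1$ edges, the conditional law of $U(v_k)$ really is (the pushforward of) a normalized surface measure on a sphere of radius bounded below by $c_{\Delta_k}|v_k|$ in a Euclidean space of dimension $d-k+1 \geq 2$, with the radius \emph{independent} of the conditioning (it is, by rigidity of the simplex: $r$ is exactly the distance from $v_k$ to $\mathrm{span}\{v_1,\dots,v_{k-1}\}$, which is $\geq c_{\Delta_k}|v_k|$). Once this is in hand, the rest is a routine iteration of the Fourier argument already carried out in Lemma \ref{GvN0}, with the sphere dimension degraded from $d-1$ to $d-k\geq1$ — still enough for the decay exponent to be positive, which is all that was used. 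The constants pick up a harmless $k$-dependence from the finitely many applications of Cauchy-Schwarz, whence the $O_k(\cdot)$. It is worth noting that $c_{\Delta_k}$ enters only through the lower bound on the single radius $r$, so the final error term retains the same shape $c_{\Delta_k}^{-1/6}\VE^{2/3}$ as in the $k=1$ case.
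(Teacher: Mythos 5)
Your overall strategy---disintegrating the Haar measure into iterated sphere measures, isolating the last vertex, applying Cauchy--Schwarz and then the Plancherel argument from Lemma \ref{GvN0}---is the same route the paper takes (following Bourgain), but there is a genuine gap at the crucial step: the claim that the spherical-average estimate holds \emph{uniformly in the frozen outer variables}. Once the first $k-1$ edges $x_1,\dots,x_{k-1}$ are frozen, the last vertex ranges over a sphere $S^{d-k}_{x_1,\dots,x_{k-1}}$ of dimension $d-k$ sitting inside an affine copy of the orthogonal complement of $\mathrm{span}\{x_1,\dots,x_{k-1}\}$. The Fourier transform of its surface measure, viewed as a measure on $\R^d$, has \emph{no} decay in the $k-1$ directions along that span: the correct estimate is the anisotropic bound \eqref{decay}, with $|\xi|$ replaced by $\mathrm{dist}(\xi,\mathrm{span}\{x_1,\dots,x_{k-1}\})$, not the isotropic bound $\min\{1,C|\xi|^{-(d-k)/2}\}$ you assert. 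Consequently the fixed-configuration quantity $\int|\widehat{f_k}(\xi)|^2\,|\widehat{\sigma^{(d-k)}_{x_1,\dots,x_{k-1}}}(\lm\xi)|^2\,d\xi$ cannot be bounded by $\|f_k\|_{U^1(\VE^4\lm)}^2+O(c_{\Delta_k}^{-1/3}\VE^{4/3})$ uniformly in $x_1,\dots,x_{k-1}$: for example, with $k=2$ take $f_2(x)=g(x\cdot e)$ oscillating at a scale much finer than $\VE^4\lm$; if $x_1$ is parallel to $e$ the circle average does not smooth $f_2$ at all (the average is just a translate of $f_2$), so the left side is comparable to $\|f_2\|_2^2$ while $\|f_2\|_{U^1(\VE^4\lm)}$ is tiny. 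So "integrating back out preserves this bound" is exactly the step that fails.

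The missing idea is that the gain must come from averaging over the configuration variables rather than pointwise in them: for each fixed frequency $\xi$, only a small proportion of configurations $(x_1,\dots,x_{k-1})$ have $\xi$ close to their span. The paper implements this by applying Cauchy--Schwarz and Plancherel so as to arrive at $\int|\widehat{f_k}(\xi)|^2 I(\lm\xi)\,d\xi$ with $I(\xi)$ defined in \eqref{I} as the \emph{configuration-averaged} square of the sphere measure's Fourier transform, and then combining \eqref{decay} with a conical decomposition of the configuration space (depending on $\xi$) to obtain the isotropic bound \eqref{Ibound}, $I(\xi)\leq\min\{1,C(c_{\Delta_k}|\xi|)^{-(d-k)/2}\}$; only then is $\widehat{\psi}(\VE^4\lm\xi)$ split off as in Lemma \ref{GvN0}. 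Two smaller points: the index you should single out is the $j$ minimizing the $U^1$-norm (handled by the symmetry/relabeling of the simplex, as in \eqref{ii}), not the vertex realizing $c_{\Delta_k}$; and the relevant radius is $\mathrm{dist}(v_k,\mathrm{span}\{v_1,\dots,v_{k-1}\})\geq c_{\Delta_k}$ directly, with no factor of $|v_k|$.
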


Indeed, if  $A\subseteq[0,1]^d$ with $d\geq k+1$ and $\A=|A|>0$, then Lemma \ref{GvN00} implies
\[\left|T_{\Delta_k}(1_A,\dots,1_A)(\lm)-T_{\Delta_k}(\A1_{[0,1]^d},\dots,\A1_{[0,1]^d})(\lm)\right|\leq (2^{k+1}-1)\|f_A\|_{U^1(\VE^4\lm)}+O_k(c_{\Delta_k}^{-1/6}\VE^{2/3})\]
for any $0<\VE,\lm\ll1$. Since
$T_{\Delta_k}(\A1_{[0,1]^d},\dots,\A1_{[0,1]^d})(\lm)=\A^{k+1}+O(\lm)$ it follows that
\bee
T_{\Delta_k}(1_A,\dots,1_A)(\lm)=\A^{k+1}+O_k(c_{\Delta_k}^{-1/6}\VE^{2/3})\eee
provided $0<\lm\leq\VE\ll 1$.


\smallskip

To finish the proof of Proposition \ref{Propn00} we are therefore left with the task of proving Lemma \ref{GvN00}.

\begin{proof}[Proof of Lemma \ref{GvN00}]
By symmetry it suffices to show that
\be\label{now1}
\left|T_{\Delta_k}(f_0,f_1,\dots,f_{k})(\lm)\right|\leq \|f_k\|_{U^1(\VE^4\lm)}+O(c_{\Delta_k}^{-1/6}\VE^{2/3}).
\ee

As in \cite{B} we start by writing
\[
T_{\Delta_k}(f_0,f_1,\dots,f_{k})(\lm)=\iint\cdots\int f_0(x)f_{1}(x-\lm x_1)\cdots f_k(x-\lm x_k)\,d\sigma^{(d-k)}_{x_1,\dots,x_{k-1}}(x_k)\cdots d\sigma^{(d-2)}_{x_1}(x_2)\,d\sigma(x_{1})\,dx
\]
where $\sigma$ now denotes the normalized measure on the sphere $S^{d-1}(0,|v_1|)$ and
$\sigma^{(d-j)}_{x_1,\dots,x_{j-1}}$
denotes, for each $2\leq j\leq k$, the normalized measure on the spheres \be
S^{d-j}_{x_1,\dots,x_{j-1}}=S^{d-1}(0,|v_j|)\cap S^{d-1}(x_1,|v_j-v_1|)\cap\cdots\cap S^{d-1}(x_{j-1},|v_j-v_{j-1}|)\ee
where $S^{d-1}(x,r)=\{x'\in\R^d\,:\,|x-x'|=r\}$.
Since
\[
\left|T_{\Delta_k}(f_0,f_1,\dots,f_{k})(\lm)\right|\leq\iint\cdots\int\Bigl|  \int f_k(x-\lm x_k)\,d\sigma^{(d-k)}_{x_1,\dots,x_{k-1}}(x_k) \Bigr|\,
d\sigma^{(d-k+1)}_{x_1,\dots,x_{k-2}}(x_{k-1})\cdots d\sigma^{(d-2)}_{x_1}(x_2)\,d\sigma(x_{1})\,dx
\]
it follows from an application of Cauchy-Schwarz that
\begin{align}\label{CS}
\left|T_{\Delta_k}(f_0,f_1,\dots,f_{k})(\lm)\right|^2\leq\int\cdots\iint\Bigl|  \int f_k(x-\lm x_k) & \,d\sigma^{(d-k)}_{x_1,\dots,x_{k-1}}(x_k) \Bigr|^2\,dx
\\
& \,d\sigma^{(d-k+1)}_{x_1,\dots,x_{k-2}}(x_{k-1})\cdots d\sigma^{(d-2)}_{x_1}(x_2)\,d\sigma(x_{1}).\nonumber
\end{align}

An application of Plancherel therefore shows that
\[ \left|T_{\Delta_k}(f_0,f_1,\dots,f_{k})(\lm)\right|^2 \leq
\int|\widehat{f_k}(\xi)|^2I(\lm\,\xi)\,d\xi\]
where
\be\label{I}
I(\xi)=\int\cdots\int
\bigl|\widehat{\sigma^{(d-k)}_{x_1,\dots,x_{j-1}}}(\xi)\bigr|^2\,d\sigma^{(d-k+1)}_{x_1,\dots,x_{k-2}}(x_{k-1})\cdots d\sigma^{(d-2)}_{x_1}(x_2)\,d\sigma(x_{1}).
\ee

Estimate (\ref{now1}) will follow if we can show that
\be\label{key}
I(\lm\xi)=I(\lm\xi)\widehat{\psi}(\VE^4\lm\xi)+I(\lm\xi)(1-\widehat{\psi}(\VE^4\lm\xi))\leq \widehat{\psi}(\VE^4\lm\xi)+O(c_{\Delta_k}^{-1/3}\VE^{4/3})
\ee
since $\|f_k\|_2\leq1$ and an application of Parseval and appeal to (\ref{almostU1}) reveals that
\be\label{72}
\int |\widehat{f_k}(\xi)|^2\widehat{\psi}(\VE^4\lm\xi)\,d\xi=\iint f_k(x)f_k(x-x_1)\psi_{\VE^4\lm}(x_1)\,dx\,dx_1=\|f_k\|_{U^1(\VE^4\lm)}^2+O(\VE^4\lm).
\ee

To establish (\ref{key}) we argue as in \cite{B}, in particular we use the fact that in addition to being trivially bounded by 1 the Fourier transform of $\sigma^{(d-k)}_{x_1,\dots,x_{k-1}}$ also decays for large $\xi$ in certain directions, specifically
\be\label{decay}
\bigl|\widehat{\sigma^{(d-k)}_{x_1,\dots,x_{k-1}}}(\xi)\bigr|\leq C \left(r(S^{d-k}_{x_1,\dots,x_{k-1}})\cdot\text{dist}(\xi,\text{span}\{x_1,\dots,x_{k-1}\})\right)^{-(d-k)/2}
\ee
where $r(S^{d-k}_{x_1,\dots,x_{k-1}})=\text{dist}(v_k,\text{span}\{v_1,\dots,v_{k-1}\})$ denotes the radius of the sphere $S^{d-k}_{x_1,\dots,x_{k-1}}$.

This estimate is a consequence of the well-known asymptotic behavior of the Fourier transform of the measure on the unit sphere $S^{d-k}\subseteq\R^{d-k+1}$ induced by Lebesgue measure, see for example \cite{Stein}.

Together with the trivial uniform bound $I(\xi)\leq 1$, and an appropriate conical decomposition (depending on $\xi$) of the configuration space over which the integral $I(\xi)$ is defined, this gives
\be\label{Ibound}
I(\xi)\leq  \min\{1,C(c_{\Delta_k}|\xi|)^{-(d-k)/2}\}.
\ee

Combining (\ref{Ibound}) with the basic bound
$
|1-\widehat{\psi}(\xi)|\leq\min\{1,C|\xi|\}
$
we obtain the uniform bound
\bee
|1-\widehat{\psi}(\VE^4\lm\,\xi)|I(\lm\,\xi)\ll\min\{(\lm c_{\Delta_k}|\xi|)^{-1/2},\VE^4\lm|\xi|\}\leq c_{\Delta_k}^{-1/3}\VE^{4/3}
\eee
from which (\ref{key}) follows.\qedhere
\end{proof}

\subsection{A Second New Proof of Theorem \ref{BourSimp}}

In this subsection we present an alternative approach to proving Proposition \ref{Propn00}  with the slightly worse error bound $O_k(c_{\Delta_k}^{-1/12}\VE^{1/3})$. Specifically, we show that one can in fact establish the following (slightly weaker)  generalized von-Neumann inequality for simplices using only Lemma \ref{GvN0}, namely the generalized von-Neumann inequality for distances.

\begin{lem}[Generalized von-Neumann for Simplices II]\label{GvN000}
For any $0<\lm\leq\VE\ll 1$ and functions
\[f_0,f_1,\dots,f_{k}:[0,1]^{d}\to[-1,1]\]
\bee
\left|T_{\Delta_k}(f_0,f_1,\dots,f_{k})(\lm)\right|\leq \sqrt{2\pi} \min_{j=0,1,\dots,k}\|f_j\|^{1/2}_{U^1(\VE^4\lm)}+O(c_{\Delta_k}^{-1/12}\VE^{1/3}).
\eee
\end{lem}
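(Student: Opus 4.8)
The plan is to deduce Lemma \ref{GvN000} from Lemma \ref{GvN0} (the generalized von-Neumann inequality for distances) by an inductive ``peeling off one vertex at a time'' argument, exploiting the hierarchical structure of the simplex measure already exposed in the proof of Lemma \ref{GvN00}. Recall the factorization
\[
T_{\Delta_k}(f_0,\dots,f_k)(\lm)=\iint\cdots\int f_0(x)f_1(x-\lm x_1)\cdots f_k(x-\lm x_k)\,d\sigma^{(d-k)}_{x_1,\dots,x_{k-1}}(x_k)\cdots d\sigma(x_1)\,dx.
\]
By symmetry we may target $f_k$. Fixing $x_1,\dots,x_{k-1}$, the innermost integral is an average of $f_k$ over a sphere $S^{d-k}_{x_1,\dots,x_{k-1}}$ of radius $r=\mathrm{dist}(v_k,\mathrm{span}\{v_1,\dots,v_{k-1}\})\geq c_{\Delta_k}$ sitting in the affine subspace through $x$ orthogonal to $\mathrm{span}\{x_1,\dots,x_{k-1}\}$. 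The key point is that a spherical average in an $(d-k+1)$-dimensional affine subspace can be written as an iterated average over $(d-k)$ one-dimensional (great-circle / codimension-one) spherical shells, i.e. one can realize $d\sigma^{(d-k)}$ as a composition of $d-k\geq 1$ single-radius spherical measures. (Alternatively, and perhaps more cleanly: an average over a sphere in $\R^m$ dominates — up to a bounded constant depending only on $m$, hence on $k$ — an iterated convolution of $m-1$ normalized arc-length measures on circles of comparable radius.) Applying the one-variable estimate of Lemma \ref{GvN0} to the outermost of these circular averages bounds $|T_{\Delta_k}(f_0,\dots,f_k)(\lm)|$ by a constant times $\|f_k\|_{U^1(\VE^4\lm)}^{1/2}$ plus an error $O(c_{\Delta_k}^{-1/6}\VE^{2/3})$ coming from the $T(\cdot,\cdot)$ error term — but crucially we only get a square root because Lemma \ref{GvN0} controls $|T(f_0,f_1)|$ by $\|f_0\|_{U^1}\|f_1\|_{U^1}$ and here the ``$f_0$'' slot is occupied by the product of the remaining (bounded) factors, whose $U^1(\VE^4\lm)$-norm we can only bound by $O(1)$; Cauchy–Schwarz in the outer variables then leaves the $1/2$ power.

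Concretely, the steps I would carry out are: (1) using the Cauchy–Schwarz reduction already displayed as \eqref{CS} in the proof of Lemma \ref{GvN00}, reduce to bounding $\int|\widehat{f_k}(\xi)|^2 I(\lm\xi)\,d\xi$ where $I(\xi)$ is as in \eqref{I}; (2) instead of inserting $\widehat\psi$ at the level of the squared kernel as there, extract from the decay estimate \eqref{Ibound}, namely $I(\xi)\leq\min\{1,C(c_{\Delta_k}|\xi|)^{-(d-k)/2}\}$, only the weakest useful bound $I(\xi)\le\min\{1,C(c_{\Delta_k}|\xi|)^{-1/2}\}$ (valid since $d-k\ge 1$); (3) split $I(\lm\xi)=I(\lm\xi)\widehat\psi(\VE^4\lm\xi)+I(\lm\xi)(1-\widehat\psi(\VE^4\lm\xi))$ and, exactly as in the proof of Lemma \ref{GvN0} but with the weaker exponent, estimate
\[
|1-\widehat\psi(\VE^4\lm\xi)|\,I(\lm\xi)\ll\min\{(\lm c_{\Delta_k}|\xi|)^{-1/2},\,\VE^4\lm|\xi|\}\leq c_{\Delta_k}^{-1/3}\VE^{4/3};
\]
(4) conclude
\[
|T_{\Delta_k}(f_0,\dots,f_k)(\lm)|^2\leq \int|\widehat{f_k}(\xi)|^2\widehat\psi(\VE^4\lm\xi)\,d\xi+O(c_{\Delta_k}^{-1/3}\VE^{4/3})=\|f_k\|_{U^1(\VE^4\lm)}^2+O(c_{\Delta_k}^{-1/3}\VE^{4/3}),
\]
using \eqref{72}; (5) take square roots, using $\sqrt{a+b}\le\sqrt a+\sqrt b$, to land at $\|f_k\|_{U^1(\VE^4\lm)}+O(c_{\Delta_k}^{-1/6}\VE^{2/3})$.

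I note that this route actually reproves Lemma \ref{GvN00} with its stated (better) error, not the weaker one of Lemma \ref{GvN000}, so if the intended proof genuinely only uses Lemma \ref{GvN0} as a black box — applying the bound $|T(f_0,f_1)(c\lm)|\le\prod\|f_j\|_{U^1(\VE^4\lm)}+O(c^{-1/6}\VE^{2/3})$ with $f_0$ replaced by the product of the other $k$ factors after iterating the sphere as a tower of $1$-sphere averages — then the loss is genuine: each such application costs a full $\|f_0\|_{U^1}\leq 1$ factor, and a final Cauchy–Schwarz/Hölder over the outer sphere variables to restore positivity introduces the square root on $\|f_k\|_{U^1(\VE^4\lm)}$, while the $O(c^{-1/6}\VE^{2/3})$ error term, after taking the square root, degrades to $O(c_{\Delta_k}^{-1/12}\VE^{1/3})$ — which matches the claimed bound exactly. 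The main obstacle, and the step deserving the most care, is item (2)/the iteration: justifying that the innermost spherical average over $S^{d-k}_{x_1,\dots,x_{k-1}}$ — a sphere whose center and containing subspace depend on $x,x_1,\dots,x_{k-1}$ — can be disintegrated as an iterated family of codimension-one spherical averages each of radius $\gtrsim c_{\Delta_k}$ in a measurable way, so that Lemma \ref{GvN0} (stated for the standard sphere $\{|x|=1\}$ in $\R^d$) applies uniformly; this is precisely the ``appropriate conical decomposition'' invoked in the proof of Lemma \ref{GvN00}, and the honest write-up must check that the implied constants depend only on $k$ and the geometry of $\Delta_k$ through $c_{\Delta_k}$, not on $d$ or on the fixed outer variables.
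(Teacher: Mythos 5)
Your concrete steps (1)--(5) are correct, but they are not a proof in the spirit of Lemma \ref{GvN000}: they simply re-run the Plancherel argument from the proof of Lemma \ref{GvN00} (the Cauchy--Schwarz step \eqref{CS}, the kernel bound \eqref{Ibound} used with exponent $1/2$, the $\widehat\psi$ splitting), arriving at the stronger conclusion $\|f_k\|_{U^1(\VE^4\lm)}+O(c_{\Delta_k}^{-1/6}\VE^{2/3})$, from which the stated weaker bound follows trivially (since $\|f_j\|_{U^1(\VE^4\lm)}\le 1$, and the claimed estimate is vacuous when $c_{\Delta_k}^{-1/12}\VE^{1/3}\ge 1$ because $|T_{\Delta_k}|\le 1$). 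So your proposal does establish the statement, but by a genuinely different route, and one that defeats the purpose of this second lemma: the point is to prove it using \emph{only} Lemma \ref{GvN0} as a black box, avoiding the stationary-phase decay \eqref{decay} and the conical decomposition behind \eqref{Ibound}, because it is precisely this squaring-out technique that survives in the relative setting of Lemma \ref{relative}, where the weights $\nu$ make the Plancherel route unavailable. The paper's actual argument goes: after \eqref{CS} one squares out the innermost spherical average, producing two copies of $f_k$ at points $x-\lm x_k$ and $x-\lm x_{k+1}$ on $S^{d-k}_{x_1,\dots,x_{k-1}}$; parameterizing $x_{k+1}$ by an angle $\theta$ exhibits the squared quantity as $\int T_{\Delta_{k+1}(\theta)}(1_{[0,1]^d},\dots,1_{[0,1]^d},f_k,f_k)(\lm)\,d\theta$ for an augmented simplex $\Delta_{k+1}(\theta)$, and the identities (\ref{i}) and (\ref{ii}) (re-basing the simplex at the new vertex and deleting the vertices carrying $1_{[0,1]^d}$) collapse this to $\int T(f_k,f_k)(c(\theta)\lm)\,d\theta$ with $c(\theta)=2\sin(\theta/2)\,\mathrm{dist}(v_k,\mathrm{span}\{v_1,\dots,v_{k-1}\})$, to which Lemma \ref{GvN0} applies with an integrable singularity $(\sin(\theta/2))^{-1/6}$; the square root on the $U^1$-norm and the degraded error $O(c_{\Delta_k}^{-1/12}\VE^{1/3})$ both come from undoing the initial Cauchy--Schwarz, exactly as you guessed. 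Your closing paragraph gestures at this, but the mechanism you describe --- placing ``the product of the remaining factors'' in the $f_0$ slot of Lemma \ref{GvN0} --- is not what happens (those factors are evaluated at distinct points and cannot be merged into one slot; they are eliminated via (\ref{i}) and (\ref{ii}), not multiplied together), so as written only your Fourier route is complete; it buys a sharper bound at the cost of exactly the machinery this lemma was designed to avoid.
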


In the proof below we will make use of the following straightforward observations:
\begin{itemize}
\item[(i)] If we let $\Delta_{k-1}=\{0,v_1,\dots,v_{k-1}\}$, then
\be\label{i}
T_{\Delta_k}(f_0,f_1,\dots,f_{k-1},1_{[0,1]^d})(\lm)= T_{\Delta_{k-1}}(f_0,f_1,\dots,f_{k-1})(\lm) + O(\lm).
\ee
\item[(ii)] If we let $\Delta'_{k}=\{0,v'_1,\dots,v'_{k}\}$ with $v'_j=v_{k-j}-v_k$ for $0\leq j\leq k-1$ and $v'_k=-v_k$, then
\be\label{ii}
T_{\Delta_k}(f_0,f_1,\dots,f_{k})(\lm)= T_{\Delta'_{k}}(f_k,f_{k-1},\dots,f_{0})(\lm).
\ee
\end{itemize}

\begin{proof}[Proof of Lemma \ref{GvN000}]
By symmetry it suffices to show that
\be\label{now}
\left|T_{\Delta_k}(f_0,f_1,\dots,f_{k})(\lm)\right|^2\leq 2\pi\, \|f_k\|_{U^1(\VE^4\lm)}+O(c_{\Delta_k}^{-1/6}\VE^{2/3}).
\ee

We initially follow the proof of Lemma \ref{GvN00}, but after (\ref{CS}) 
we now proceed differently. Instead of applying Plancherel to the right hand side of
\bee
|T_{\Delta_k}(f_0,f_1,\dots,f_{k})(\lm)|^2\leq\iint\cdots\int\Bigl|  \int f_k(x-\lm x_k)\,d\sigma^{(d-k)}_{x_1,\dots,x_{k-1}}(x_k) \Bigr|^2
\,
d\sigma^{(d-k+1)}_{x_1,\dots,x_{k-2}}(x_{k-1})\cdots d\sigma(x_{1})\,dx.
\eee
we  now ``square out" the right hand side to obtain
\be\label{82}
\iint\!\!\cdots\!\!\iiint  \!\! f_k(x-\lm x_{k}) f_k(x-\lm x_{k+1})\,d\sigma^{(d-k)}_{x_1,\dots,x_{k-1}}(x_{k+1})\,d\sigma^{(d-k)}_{x_1,\dots,x_{k-1}}(x_{k})\,
d\sigma^{(d-k+1)}_{x_1,\dots,x_{k-2}}(x_{k-1})\cdots d\sigma(x_{1})\,dx.
\ee


If $d= k+1$,
 then for fixed $x_1,\dots, x_k$ we can use arc-length to parameterize of the circle $S^{d-k}_{x_1,\dots,x_{k-1}}$, with $\theta=0$ and $\theta=2\pi$ corresponding to the point $x_k$, to write
\be
\int f_k(x-\lm x_{k+1})\,d\sigma^{(d-k)}_{x_1,\dots,x_{k-1}}(x_{k+1})=\int_0^{2\pi}  f_k(x-\lm x_{k+1}(x_1,\dots,x_k,\theta))\,d\theta.
\ee

For any fixed $\theta\in[0,2\pi]$ we then define $\Delta_{k+1}(\theta)=\{0,v_1,\dots,v_k,v_{k+1}(\theta)\}$
with  $v_{k+1}=v_{k+1}(\theta)$ satisfying
$
|v_{k+1}|=|v_k|,
$
$
|v_{k+1}-v_j|=|v_k-v_j|
$
for all $1\leq j\leq k-1$ and use $\theta$ to determine the angle between $v_{k+1}$ and $v_k$ measured from the center of the circle $S^{d-k}_{x_1,\dots,x_{k-1}}$, consequently
\bee |v_{k+1}-v_k|=2\sin(\theta/2)\cdot \text{dist}(v_k,\text{span}\{v_1,\dots,v_{k-1}\}).
\eee

It follows that
\bee
|T_{\Delta_k}(f_0,f_1,\dots,f_{k})(\lm)|^2\leq\int_0^{2\pi} T_{\Delta_{k+1}(\theta)}(1_{[0,1]^d},\dots,1_{[0,1]^d},f_{k},f_{k})(\lm)\,d\theta + O(\lm)
\eee
and in light of (\ref{i}) and (\ref{ii}) that
\begin{align*}
|T_{\Delta_k}(f_0,f_1,\dots,f_{k})(\lm)|^2
&\leq\int_0^{2\pi}T_{\Delta'_{k+1}(\theta)}(f_{k},f_{k},1_{[0,1]^d},\dots,1_{[0,1]^d})(\lm)\,d\theta + O(\lm)\\
&=\int_0^{2\pi}T_{\Delta'_{1}(\theta)}(f_{k},f_{k})(\lm)\,d\theta + O(\lm)
\end{align*}
where
\[
T_{\Delta'_{1}(\theta)}(f_{k},f_{k})(\lm)=T(f_{k},f_{k})(c(\theta)\lm):=\iint f_k(x)f_k(x- c(\theta)\lm x_1)\,d\sigma(x_1) \,dx
\]
with $c(\theta)=2\sin(\theta/2)\cdot \text{dist}(v_k,\text{span}\{v_1,\dots,v_{k-1}\})$. Lemma \ref{GvN0} now implies that
\bee
|T_{\Delta'_{1}(\theta)}(f_{k},f_{k})(\lm)|\leq\|f_k\|_{U^1(\VE^4\lm)}+O((\sin(\theta/2))^{-1/6}c_{\Delta_k}^{-1/6}\VE^{2/3})
\eee
since $c(\theta)\geq 2\sin(\theta/2)\,c_{\Delta_k}$. This completes the proof, when $d=k+1$, as
$\int_0^{2\pi}(\sin(\theta/2))^{-1/6}\,d\theta<\infty,
$
and in fact establishes the result in general, since if $d\geq k+2$, one can define a new non-degenerate simplex \[\Delta_{d-1}=\{0,v_1,\dots,v_{k-1}, v'_{k},\dots,v'_{d-2},v'_{d-1}\}\]
with $v'_{d-1}=v_k$ and use the fact that
\bee
T_{\Delta_k}(f_0,f_1,\dots,f_{k})(\lm)=T_{\Delta_{d-1}}(f_0,\dots,f_{k-1},1_{[0,1]^d},\dots,1_{[0,1]^d}, f_{k})(\lm)+O(\lm).\qedhere
\eee
\end{proof}

\subsection{A Direct proof of Lemma \ref{GvN000} when $d\geq k+2$}\label{direct}
We choose to include an additional argument similar to the one presented above that covers the case $d\geq k+2$ directly. Arguments of this nature will be critical important  in Section \ref{6.2} when we establish a ``relative generalized von-Neumann inequality" for simplices.

\smallskip

If $d\geq k+2$ then in (\ref{82}), for fixed $x_1,\dots, x_k$, we write
\be
\sigma^{(d-k)}_{x_1,\dots,x_{k-1}}(x_{k+1})=\int_0^{\pi} (\sin\theta)^{d-k-1}\,d\sigma^{(d-k-1)}_{x_1,\dots,x_{k-1},x_k,\theta}(x_{k+1})\,d\theta
\ee
where $\sigma^{(d-k-1)}_{x_1,\dots,x_{k-1},x_k,\theta}(x_{k+1})$ denotes the normalized measure on the sphere \be
S^{d-k-1}_{x_1,\dots,x_{k-1},x_k,\theta}=S^{d-1}(0,|v_{k+1}|)\cap S^{d-1}(x_1,|v_{k+1}-v_1|)\cap\cdots\cap S^{d-1}(x_{k},|v_{k+1}-v_{k}|)\ee
with $v_{k+1}=v_{k+1}(\theta)$ defined such that
$
|v_{k+1}|=|v_k|,
$
$
|v_{k+1}-v_j|=|v_k-v_j|
$
for all $1\leq j\leq k-1$ with $\theta$ determining the angle between $v_{k+1}$ and $v_k$ measured from the center of the sphere $S^{d-k}_{x_1,\dots,x_{k-1}}$, consequently
\bee |v_{k+1}-v_k|=2\sin(\theta/2)\cdot \text{dist}(v_k,\text{span}\{v_1,\dots,v_{k-1}\}).
\eee
If we again let $\Delta_{k+1}(\theta)=\{0,v_1,\dots,v_k,v_{k+1}\}$, it follows that
\bee
|T_{\Delta_k}(f_0,f_1,\dots,f_{k})(\lm)|^2\leq\int_0^\pi (\sin\theta)^{d-k-1}T_{\Delta_{k+1}(\theta)}(1_{[0,1]^d},\dots,1_{[0,1]^d},f_{k},f_{k})(\lm)\,d\theta + O(\lm)
\eee
and in light of (\ref{i}) and (\ref{ii}) that
\begin{align*}
|T_{\Delta_k}(f_0,f_1,\dots,f_{k})(\lm)|^2
&\leq\int_0^\pi (\sin\theta)^{d-k-1}T_{\Delta'_{k+1}(\theta)}(f_{k},f_{k},1_{[0,1]^d},\dots,1_{[0,1]^d})(\lm)\,d\theta + O(\lm)\\
&=\int_0^\pi (\sin\theta)^{d-k-1}T_{\Delta'_{1}(\theta)}(f_{k},f_{k})(\lm)\,d\theta + O(\lm)
\end{align*}
where again
\bee
T_{\Delta_{1}(\theta)}(f_{k},f_{k})(\lm)=T(f_{k},f_{k})(c(\theta)\lm):=\iint f_k(x)f_k(x- c(\theta)\lm x_1)\,d\sigma(x_1) \,dx
\eee
with $c(\theta)=2\sin(\theta/2)\cdot \text{dist}(v_k,\text{span}\{v_1,\dots,v_{k-1}\})$. Lemma \ref{GvN0} again implies that
\bee
|T_{\Delta'_{1}(\theta)}(f_{k},f_{k})(\lm)|\leq\|f_k\|_{U^1(\VE^4\lm)}+O((\sin(\theta/2))^{-1/6}c_{\Delta_k}^{-1/6}\VE^{2/3})
\eee
since $c(\theta)\geq 2\sin(\theta/2)\,c_{\Delta_k}$ and this completes the proof as
$\int_0^\pi (\sin\theta)^{d-k-1}(\sin(\theta/2))^{-1/6}\,d\theta<\infty.$
\qed


\bigskip


\section{Proof of Theorems \ref{Rect} and \ref{ProdSimp}}\label{4}

We now proceed with the main task, namely the proofs of Theorems  \ref{Rect} and \ref{ProdSimp}.

\subsection{Reducing Theorems \ref{Rect} and \ref{ProdSimp} to quantitative results
for subsets of $[0,1]^{d_1}\times[0,1]^{d_2}$}\label{red}

\begin{propn}[Rectangles]\label{Propn1}
Let $0<c\leq1$ and $A\subseteq[0,1]^{d_1}\times[0,1]^{d_2}$ with $d_1,d_2\geq2$ and  $\A=|A|>0$.

If $\{\lm_j\}$ is any sequence in $(0,1)$ with $\lm_{j+1}<\frac{1}{2}\lm_j$ for all $j\geq1$, then there exist $1\leq j\leq J(\A)$ and a quadruple of points
\bee\{(x,y), (x',y), (x,y'), (x',y')\}\subseteq A\quad\text{with}\quad|x-x'|=\lm_j\text{ and\, }|y-y'|=c\lm_j.\eee

In fact, for $\lm=\lm_j$
\bee
\iiiint 1_A(x,y)1_A(x-\lm x_1,y)1_A(x,y-c\lm y_1)1_A(x-\lm x_1,y-c\lm y_1)\,d\sigma_1(x_1)\,d\sigma_2(y_1)\,dx\,dy\geq C(\A)>0
\eee
where $\sigma_i$
denotes, for $i=1,2$, the normalized measure on the unit sphere $S^{d_i-1}\subseteq\R^{d_i}$ centered at the origin induced by the Lebesgue measure on $\R^{d_i}$.
\end{propn}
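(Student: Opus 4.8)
The plan is to exploit the product structure by fibering over $y$ and applying the one-dimensional machinery (Proposition \ref{Propn0} / Lemma \ref{GvN0}) in the $x$-variable first, and then again in the $y$-variable. For each fixed $y$ and $y'$ consider the sets $A_y=\{x: (x,y)\in A\}\subseteq[0,1]^{d_1}$ and define the bilinear counting quantity
\[
\Lambda_j(y,y')=\iint 1_{A_y}(x)\,1_{A_{y'}}(x)\,1_{A_y}(x-\lm_j x_1)\,1_{A_{y'}}(x-\lm_j x_1)\,d\sigma_1(x_1)\,dx,
\]
so that the integral in the statement is $\iint \Lambda_j(y,y')\,d\sigma_2\bigl(\tfrac{y-y'}{c\lm_j}\bigr)$ suitably interpreted, i.e. it is $\iint\!\!\int \Lambda_j(y,y-c\lm_j y_1)\,d\sigma_2(y_1)\,dy$. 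The first step is to run a pigeonhole/telescoping argument over the scales $\lm_1>\lm_2>\cdots$: since the ``$U^1$-type'' quantities at widely separated scales cannot all be large (the total variation in average density across $\log_2(1/\text{scale})$ dyadic scales is controlled), there must be some $1\leq j\leq J(\A)$ at which $A$ looks uniformly distributed at scale roughly $\VE^4\lm_j$ in a mean-square sense — more precisely, at which $\|f_{A}\|_{U^1}$ type quantities in both variables are $\leq\VE$ with $\lm_j\leq\VE$. This is the standard device (as in the reduction in Section \ref{newreduction}, but now iterated) that converts ``for some scale in a long sequence'' into ``at a uniformly distributed scale''.

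Next, at this good scale, I would apply Lemma \ref{GvN0} in the $x$-variable: writing $g=1_A - \A' 1$ for an appropriate product-type comparison function, the bilinear-in-$x$ average $\Lambda_j(y,y')$ is, after integrating against $d\sigma_2$ in $y$, within $O(c^{-1/6}\VE^{2/3})$ of the analogous quantity with each inner pair of $1_{A_y}$'s replaced by its $Q_L$-average; the generalized von Neumann inequality for distances is exactly what licenses this, because the spherical average in $x_1$ has the needed Fourier decay in $\R^{d_1}$ (here $d_1\geq 2$ is used). This reduces the four-point count to a two-point count in the $y$-variable of the function $y\mapsto \bigl(\text{density of }A_y\text{ on average boxes}\bigr)$, to which Lemma \ref{GvN0} is applied a second time, now in $\R^{d_2}$ (using $d_2\geq 2$). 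The main term that survives is $\iint \bigl(\overline{1_A}(x,y)\bigr)^{\ldots}$ which one checks is $\geq \A^{4}$ (or rather a positive power of $\A$) by Cauchy–Schwarz/Hölder applied to the fibered densities, since on average $|A_y|$ is $\A$ and the positivity is not lost. The conclusion $\geq C(\A)>0$ follows provided the error terms $O(c^{-1/6}\VE^{2/3})$ were made $\leq \tfrac12 C(\A)$ by choosing $\VE=\VE(\A)$ small, which in turn fixes $J(\A)$.

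The main obstacle I anticipate is the bookkeeping in the double pigeonhole: one needs a single scale $\lm_j$ that is simultaneously good for the uniform-distribution input in the $x$-direction, in the $y$-direction, and for the ``diagonal'' fibered objects $A_y$ indexed by the second variable — and one must be careful that the comparison function against which $f_A$ is measured (product of fiberwise densities, not the global constant $\A$) is itself the right object, so that the surviving main term is genuinely bounded below rather than merely ``$\A^4$ minus errors''. Concretely, after the first application of Lemma \ref{GvN0} the natural main term involves $F(y)=\frac{1}{L^{d_1}}\int_{t+Q_L}1_{A_y}$, and one must argue $\iint F(y)F(y')\,d(\text{2-pt measure})$ is $\gtrsim (\int F)^2\gtrsim\A^2$ while simultaneously the cross terms from expanding $1_A=\overline{1_A}+(1_A-\overline{1_A})$ are absorbed; keeping the four instances of $1_A$ and their averaged counterparts correctly paired through two successive Cauchy–Schwarz steps is where the technical care is needed. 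I expect the positivity of the main term to be the genuinely substantive point and the Fourier-decay estimates to be immediate from Lemma \ref{GvN0} as a black box.
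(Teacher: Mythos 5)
There is a genuine gap, and it is exactly the difficulty your final paragraph gestures at but does not resolve. The relevant notion of uniformity for the four-point count is not the $U^1(L)$ (box-average) uniformity of $1_A$ in each variable separately, but the two-dimensional box norm $\|f_A\nu\|_{\Box(\VE^4\lm)}$: when you apply Lemma \ref{GvN0} in the $x$-variable for fixed $y,y'$, the error is controlled by $U^1$ norms of the fibered products $x\mapsto 1_{A_y}(x)1_{A_{y'}}(x)$ (one such function for essentially every pair $(y,y')$), and packaging these through the two Cauchy--Schwarz steps produces precisely the $\Box(\VE^4\lm)$ norm of $f_A$, not the $U^1$ norms of $A$ in the two variables. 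Your pigeonhole/telescoping step over the scales $\lm_1>\lm_2>\cdots$ only works for quantities with a monotone ``energy across scales'' (as in the reduction of Theorem \ref{KW}, where the upper Banach density supplies the monotonicity); there is no such monotone quantity forcing the box norm, or the $U^1$ norms of all the fibered products, to be small at one of $J(\A)$ prescribed scales. Likewise, the claimed lower bound for the surviving main term by ``Cauchy--Schwarz on fibered densities'' does not go through: after your reduction the main term is itself a two-point count (in $y$) of a fiber-density function, which is again only comparable to its mean square if that function is uniformly distributed at the same scale --- the very thing the pigeonhole cannot provide.

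This is why the paper's proof has the architecture it does. Largeness of the box norm is not an error to be averaged away but a structural dichotomy: the Inverse Theorem (Theorem \ref{InvThm}) converts $\|f_A\nu\|_{\Box(\VE^4\lm)}\gg\A^4$ into a density increment of $A$ on a product $B_1'\times B_2'$ of \emph{arbitrary subsets} of small cubes --- not on cubes --- so the iteration leaves the setting of ``$A$ compared with the constant $\A$ on a cube''. This forces (i) relativizing the whole count to weighted measures $\nu=\nu_1^{1/2}\otimes\nu_2^{1/2}$ supported on a product $B_1\times B_2$ (Lemma \ref{GvN1}, Corollary \ref{CorGvN1}), which in turn requires $B_1,B_2$ to be uniformly distributed so that Proposition \ref{Propn0} gives $T_{\Box_c}(\nu,\nu,\nu,\nu)(\lm)\approx 1$; and (ii) a version of Szemer\'edi's Regularity Lemma adapted to the sequence of scales (Theorem \ref{reglemma}, Corollary \ref{cor3.3}) to replace the non-uniform sets $B_1',B_2'$ by uniformly distributed ones at a later scale $\lm_{j'}$ while retaining the density increment. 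The bound $J(\A)$ then comes from the fact that the relative density can increase by $c'\A^{32}$ only boundedly many times, not from a pigeonhole for uniform distribution. Without the inverse theorem, the relative von Neumann inequality, and the regularity step, the ``good scale'' your proposal needs cannot be produced.
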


\begin{propn}[Product of Simplices]\label{Propn11}
Let $\Delta_{k_i}=\{0,v^i_1,v^i_2,\dots,v^i_{k_i}\}$ be fixed non-degenerate simplices of dimension $k_i$  with \[c_{\Delta_{k_i}}=\min_{1\leq j\leq k_i}\text{\emph{dist}}(v^i_j,\text{\emph{span}}\left\{\{v^i_1,\dots,v^i_{k_i}\}\setminus v^i_j\right\})\leq1\]
for $i=1,2$ and $A\subseteq[0,1]^{d_1}\times[0,1]^{d_2}$ with $d_i\geq k_i+3$ and $\A=|A|>0$.

If
 $\{\lm_j\}$ is any sequence in $(0,1)$ with  $\lm_{j+1}<\frac{1}{2}\lm_j$ for all $j\geq1$, then there exist $1\leq j\leq J(\A,\Delta_{k_1},\Delta_{k_2})$ and a product $\Delta_{k_1}'\times\Delta_{k_2}'\subseteq A$ with each $\Delta_{k_i}'\subseteq[0,1]^{d_i}$ an  isometric copy of $\lm_j\cdot\Delta_{k_i}$.
In fact, for $\lm=\lm_j$
\bee
\iiiint \prod_{i=0}^{k_1}\prod_{j=0}^{k_2}1_A(x-\lm\cdot U_1(v^1_i),y-\lm\cdot U_2(v^2_j))
\,d\mu_1(U_1)\,d\mu_2(U_2)\,dx\,dy\geq C(\A)>0
\eee
where $v_0^1=v_0^2=0$ and  $\mu_1$ and $\mu_2$ denote the Haar measures on $SO(d_1)$ and $SO(d_2)$ respectively.
\end{propn}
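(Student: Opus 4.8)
\emph{Proof plan.} It suffices to prove the following quantitative sharpening, which implies Proposition~\ref{Propn11}: for a suitable $\VE=\VE(\A,\Delta_{k_1},\Delta_{k_2})>0$ all but $O_{\A,\Delta_{k_1},\Delta_{k_2}}(1)$ of the indices $j$ are \emph{bad}, and for every \emph{good} index $j$ the displayed integral, which we denote $T_{\lm_j}(A)$, satisfies $T_{\lm_j}(A)\ge\tfrac12(\A/2)^{N}=:C(\A)>0$ where $N:=(k_1+1)(k_2+1)$. For a scale $\rho$ and $i=1,2$ let $A_\rho^{(i)}$ denote $1_A$ averaged over cubes of side $\rho$ in the $i$-th variable, and for $f:[0,1]^{d_1}\times[0,1]^{d_2}\to\R$ set
\[\|f\|_{U^1_x(\rho)}^2=\int_{[0,1]^{d_2}}\!\!\int_{[0,1]^{d_1}}\Bigl|\rho^{-d_1}\!\!\int_{x+Q_\rho}\!\!f(x',y)\,dx'\Bigr|^2dx\,dy,\]
and $\|f\|_{U^1_y(\rho)}$ analogously; for fixed $y$, $\|f(\cdot,y)\|_{U^1(\rho)}$ is exactly the one-variable norm of the introduction and $\int\|f(\cdot,y)\|_{U^1(\rho)}^2\,dy=\|f\|_{U^1_x(\rho)}^2$. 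The strategy is a \emph{relative generalized von-Neumann inequality for products} combined with a pigeonhole over the lacunary sequence $\{\lm_j\}$.

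For the von-Neumann step, fix a coarse scale $\rho$ with $\VE^4\lm\ll\rho\ll1$ and write $1_A=A_\rho^{(1)}+h$, so that $A_\rho^{(1)}$ is slowly varying in the first variable on scale $\rho$ and $h=1_A-A_\rho^{(1)}$ is $[-1,1]$-valued. Expanding $T_\lm(1_A)$ multilinearly over the $N$ ``corners'' $(i,j)$, $0\le i\le k_1$, $0\le j\le k_2$, yields the all-$A_\rho^{(1)}$ term $T_\lm(A_\rho^{(1)},\dots,A_\rho^{(1)})$ together with $2^N-1$ terms in each of which $h$ occupies a nonempty set $S$ of corners. For such a term I would freeze the rotation $U_2$, group the integrand by its first index $i$, and recognise the resulting integral over $x$ and $U_1$ as a simplex count $T_{\Delta_{k_1}}(F_0,\dots,F_{k_1})(\lm)$ in the first variable, where $F_i$ is a product of $y$-translates of $A_\rho^{(1)}$ and $h$; choosing a row $i_0$ containing a corner of $S$, the factor of $F_{i_0}$ equal to a translate of $h$ together with the slow variation in $x$ of the remaining (translated-$A_\rho^{(1)}$) factors gives $\|F_{i_0}\|_{U^1_x(\VE^4\lm)}\le\|h(\cdot\,,y-\lm U_2v^2_{j_0})\|_{U^1_x(\VE^4\lm)}+O(\VE^4\lm/\rho)$. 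Lemma~\ref{GvN00}, after a further Cauchy--Schwarz needed to bring this count into a form to which the decay estimate \eqref{decay} applies in the presence of the slowly-varying weights (this is the step that costs two dimensions, hence the hypothesis $d_1\ge k_1+3$, with no loss when $k_1=1$), then bounds the term by $\|h(\cdot\,,y-\lm U_2v^2_{j_0})\|_{U^1_x(\VE^4\lm)}+O(c_{\Delta_{k_1}}^{-1/6}\VE^{2/3})$; integrating in $y$ and $U_2$ and applying Cauchy--Schwarz converts this into $\|h\|_{U^1_x(\VE^4\lm)}+O(c_{\Delta_{k_1}}^{-1/6}\VE^{2/3})$. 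Summing over $S$ gives
\[T_\lm(1_A)=T_\lm(A_\rho^{(1)},\dots,A_\rho^{(1)})+O_N\bigl(\|1_A-A_\rho^{(1)}\|_{U^1_x(\VE^4\lm)}\bigr)+O_N\bigl(c_{\Delta_{k_1}}^{-1/6}\VE^{2/3}\bigr).\]

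Next I would bound $T_\lm(A_\rho^{(1)},\dots,A_\rho^{(1)})$ from below. Since $A_\rho^{(1)}$ is slowly varying in $x$ and $T_\lm$ only translates points by $O(\lm)$, collapsing the first-variable structure gives $T_\lm(A_\rho^{(1)},\dots,A_\rho^{(1)})=\int_{[0,1]^{d_1}}T_{\Delta_{k_2}}\bigl(G(x,\cdot),\dots,G(x,\cdot)\bigr)(\lm)\,dx+O(\lm/\rho)$ with $G:=(A_\rho^{(1)})^{k_1+1}\in[0,1]$. Running the same device in the second variable — write $G(x,\cdot)=(G)_\rho^{(2)}(x,\cdot)+h_2(x,\cdot)$, expand the $(k_2+1)$-fold simplex count, estimate each term containing $h_2$ by Lemma~\ref{GvN00} in the second variable (the analogous extra Cauchy--Schwarz needing $d_2\ge k_2+3$, again no loss when $k_2=1$) and collapse the $y$-structure of the surviving all-$(G)_\rho^{(2)}$ term — produces, after two applications of the power-mean inequality (in $y$ and then in $x$),
\[T_\lm(A_\rho^{(1)},\dots,A_\rho^{(1)})\ \ge\ \Bigl(\int_{[0,1]^d}1_A-O(\rho)\Bigr)^{N}-O_N\bigl(\|G-(G)_\rho^{(2)}\|_{U^1_y(\VE^4\lm)}\bigr)-O_N\bigl(c_{\Delta_{k_2}}^{-1/6}\VE^{2/3}\bigr)-O(\lm/\rho).\]
It remains to make the two $U^1$-errors small for some small index $j$. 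Fix $m=m(\A,\Delta_{k_1},\Delta_{k_2})$ so large that $2^{-m}$ and $O(2^{-m})$ corrections to $(\A/2)^{N}$ are negligible, and take $\rho=\rho_j:=\lm_{j-m}$, so $\lm_j/\rho_j\le2^{-m}$. Then $\|1_A-A_{\rho_j}^{(1)}\|_{U^1_x(\VE^4\lm_j)}$ equals $\|A_{\VE^4\lm_j}^{(1)}-A_{\lm_{j-m}}^{(1)}\|_{L^2}$ up to $O(2^{-m})+o_j(1)$, and since $\{\VE^4\lm_j\}$ and $\{\lm_{j-m}\}$ are lacunary a standard Littlewood--Paley square-function estimate gives $\sum_j\|A_{\VE^4\lm_j}^{(1)}-A_{\lm_{j-m}}^{(1)}\|_{L^2}^2\lesssim (m+\log\tfrac{1}{\VE})\|1_A\|_{L^2}^2\le m+\log\tfrac{1}{\VE}$; the second-variable error is controlled the same way once the mild $j$-dependence of $G=(A_{\rho_j}^{(1)})^{k_1+1}$ is absorbed through one more such estimate in the first variable. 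Hence at most $O\bigl(\VE^{-2}(m+\log\tfrac{1}{\VE})\bigr)$ indices are bad, and choosing $\VE=\VE(\A,\Delta_{k_1},\Delta_{k_2})$ small enough that all the $c^{-1/6}\VE^{2/3}$ and $O(\rho_j)$ errors together are below $\tfrac14(\A/2)^{N}$ yields a good index $j\le J(\A,\Delta_{k_1},\Delta_{k_2})$ with $T_{\lm_j}(A)\ge\tfrac12(\A/2)^{N}>0$, as required.

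The principal obstacle is the relative generalized von-Neumann inequality itself: every mixed term of the multilinear expansion must be controlled \emph{uniformly in the frozen rotation and in the slowly-varying weights at the other corners}, and the non-symmetric product structure forces, for each such term and in each factor separately, one Cauchy--Schwarz beyond the one already present in the proof of Lemma~\ref{GvN00}, at the cost of two dimensions — which is precisely what degrades Bourgain's range $d_i\ge k_i+1$ to $d_i\ge k_i+3$ here, while the same bookkeeping shows there is no loss when $k_i=1$, recovering the sharp hypothesis $d\ge4$ of Theorem~\ref{Rect}.
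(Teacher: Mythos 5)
Your reduction of the mixed terms in the first-variable expansion to the norm $\|1_A-A_\rho^{(1)}\|_{U^1_x(\VE^4\lm)}$ is the crux of your argument, and it is false. The problem is exactly where you wave at it: for a term whose set $S$ of $h$-corners meets some row in two or more places, the chosen row $F_{i_0}$ contains \emph{several} translates of $h=1_A-A_\rho^{(1)}$, and these are not slowly varying in $x$, so $\|F_{i_0}\|_{U^1_x(\VE^4\lm)}$ is a correlation of two distinct vertical slices of $h$ which the one-variable $U^1_x$ norm of $h$ cannot see (and you cannot always pick a row meeting $S$ exactly once, e.g.\ $S=\{(i_0,0),(i_0,1)\}$). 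A concrete counterexample to your displayed ``relative von Neumann for products'': take $k_1=k_2=1$, $A=G\times[0,1]^{d_2}$ with $G\subseteq[0,1]^{d_1}$ a checkerboard-type union of cells of side $\ell\ll\VE^4\lm$ and $|G|=\tfrac12$. Then $A_\rho^{(1)}\approx\tfrac12$ everywhere, so $T_\lm(A_\rho^{(1)},\dots,A_\rho^{(1)})\approx\tfrac1{16}$, while $h\approx(1_{G}-\tfrac12)\otimes 1$ has $\|h\|_{U^1_x(\VE^4\lm)}=o(1)$; yet by Proposition \ref{Propn0} the true count factors and $T_\lm(1_A)\approx\tfrac14$. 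The discrepancy comes precisely from the terms with two $h$'s in one row, where $h(x,y)h(x,y-c\lm y_1)\approx\tfrac14$ with no cancellation. (Your second-variable step is fine, because after collapsing $x$ each slot of the simplex count holds a single function of $y$; it is the first, genuinely two-variable, reduction that breaks.) Note also that in your framework all weights are bounded by $1$, so Lemma \ref{GvN00} needs only $d_1\geq k_1+1$; the loss to $d_i\geq k_i+3$ in the paper has a different source than the one you posit.

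This is not a repairable detail but the point where the paper's machinery becomes necessary. The paper controls the error in the product count by the two-variable box norm $\|f_A\nu\|_{\Box(\VE^4\lm)}$ (Lemmas \ref{GvN1} and \ref{GvN11}), which is exactly designed to capture the diagonal correlations your $U^1_x$ norm misses. But largeness of the box norm does not produce an $L^2$ quantity that is summable over the lacunary scales, so your Littlewood--Paley/pigeonhole step has no analogue; instead the Inverse Theorem (Theorem \ref{InvThm}) converts a large box norm into a density increment of $A$ on a product $B_1'\times B_2'$ of subsets of small cubes, and one must iterate. That iteration is why the count is taken relative to sets $B_1\times B_2$ with the unbounded weights $\nu_i=\B_i^{-1}1_{B_i}$, why the relative von Neumann inequality for simplices (Lemma \ref{relative}) is needed -- this is where $d_i\geq k_i+3$ actually enters -- and why the regularity lemma adapted to the scales $\{\lm_j\}$ (Theorem \ref{reglemma}, Corollary \ref{cor3.3}) is required to restore uniform distribution of the ambient sets before the dichotomy (Proposition \ref{part11}) can be applied again. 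Your proposal bypasses all of this, and as written the bypass fails at its first step.
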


The reduction of Theorems \ref{Rect} and \ref{ProdSimp} to these results in the compact setting of $[0,1]^{d_1}\times[0,1]^{d_2}$ is straightforward and precisely the approach taken by Bourgain  in \cite{B} to prove Theorem \ref{BourSimp}, but for completeness we supply the details for Theorem \ref{Rect} below.

\begin{proof}[Proof that Proposition \ref{Propn1} implies Theorem \ref{Rect}]
We may assume that $c:=|v_2|\leq|v_1|=1$.

Arguing indirectly we suppose that $A\subseteq \R^d$ with $d\geq 4$ is a set with $\delta^*(A)>0$ for which the conclusion of Theorem \ref{Rect} fails to hold, namely that there exist arbitrarily large $\lm\in\R$ for which $A$ does not contain an isometric copy of $\lm \cdot\Box$.

We now let $0<\alpha<\delta^*(A)$ and set $J=J(\alpha)$ from Proposition \ref{Propn1}.  By our indirect assumption we can choose a sequence $\{\lm_j\}_{j=1}^J$ with the property that
$\lm_{j+1}<\frac{1}{2}\lm_j$ for all $1\leq j\leq J-1$ and $A$ does not contain an isometric copy of $\lm_j \cdot\Box$ for each $1\leq j\leq J$.
It follows from the definition of upper Banach density that exist $N\in\R$ with $N\gg\lm_1$ and $t_0\in\R^d$ for which
\[\frac{|A\cap(t_0+Q_N)|}{|Q_N|}\geq\alpha.\]
Rescaling $A\cap(t_0+Q_N)$ to a subset of $[0,1]^d$ and applying Proposition \ref{Propn1} leads to a contradiction. 
\end{proof}

\subsection{Proof of Propositions \ref{Propn1} and \ref{Propn11}, Part I: A Density Increment Strategy 
}\label{SecPart1}


\begin{propn}[Dichotomy for Rectangles]\label{part1}

Let $0<c\leq1$ and $B_i\subseteq[0,1]^{d_i}$ with $d_i\geq 2$  and $\B_i=|B_i|>0$ for $i=1,2$.
If $A\subseteq B_1\times B_2$ with $|A|=\A\B_1\B_2>0$ and $0<\lm\leq\VE\ll c\B_1^{6}\B_2^{6}\A^{32}$, then either
\bee
\frac{1}{\B_1^{2}\B_2^{2}}\iiiint 1_A(x,y)1_A(x-\lm x_1,y)1_A(x,y-c\lm y_1)1_A(x-\lm x_1,y-c\lm y_1)\,d\sigma_1(x_1)\,d\sigma_2(y_1)\,dx\,dy\geq\frac{1}{2}\A^{4}\eee
or there exist cubes $Q_i\subseteq[0,1]^{d_i}$ of side-length $\VE^4\lm$, sets $B_i'$  in $Q_i$, and $c'>0$ for which
\bee
\frac{|A\cap(B_1'\times B_2')|}{|B_1'\times B_2'|}\geq \A+c'\,\A^{32}.
\eee
provided $B_1$ and $B_2$ are $(\VE,\VE^4\lm)$-uniformly distributed subsets of $[0,1]^{d_1}$ and $[0,1]^{d_2}$ respectively.
\end{propn}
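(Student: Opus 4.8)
\textbf{Proof plan for Proposition \ref{part1} (Dichotomy for Rectangles).}

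The plan is to run a ``fibered'' argument over the $\R^{d_1}$ direction, applying the distance machinery of Section \ref{newdistances} along $\R^{d_2}$-fibers and then closing up. Write
\[
\Lambda(A)=\iiiint 1_A(x,y)1_A(x-\lm x_1,y)1_A(x,y-c\lm y_1)1_A(x-\lm x_1,y-c\lm y_1)\,d\sigma_1(x_1)\,d\sigma_2(y_1)\,dx\,dy
\]
and for each pair $x,x'\in[0,1]^{d_1}$ let $g_{x}(y)=1_A(x,y)$, $g_{x,x'}(y)=1_A(x,y)1_A(x',y)$. Averaging $x'=x-\lm x_1$ over $\sigma_1$, the inner double integral in $y$ is exactly $T(g_{x,x'},g_{x,x'})(c\lm)$ in the notation of Section \ref{newdistances} (a weighted count of $c\lm$-distances inside the $y$-slice determined by the pair $(x,x')$). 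First I would record, via (\ref{most t's})-type reasoning applied to the uniformly distributed set $B_2$, that for ``most'' pairs $(x,x')$ the slice function $g_{x,x'}$ inherits an $(\VE',\VE'^4\lm)$-uniform distribution property on $[0,1]^{d_2}$ with a loss polynomial in the relevant densities; this is the place where the hypothesis $\lm\leq\VE\ll c\,\B_1^6\B_2^6\A^{32}$ is consumed, since the admissible $\VE'$ degrades by factors of the slice densities. Then Lemma \ref{GvN0} (generalized von Neumann for distances, applied on $\R^{d_2}$, which needs $d_2\geq2$) gives, for those good pairs,
\[
T(g_{x,x'},g_{x,x'})(c\lm)=\Big(\int 1_A(x,y)1_A(x',y)\,dy\Big)^2+O\big(c^{-1/6}\VE'^{2/3}\big).
\]

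The second step is to integrate this identity over $x$ and $x'=x-\lm x_1$. Setting $F(x,x')=\int_{[0,1]^{d_2}}1_A(x,y)1_A(x',y)\,dy$, the main term of $\Lambda(A)$ becomes $\iint F(x,x-\lm x_1)^2\,d\sigma_1(x_1)\,dx$, which by Cauchy--Schwarz is at least $\big(\iint F(x,x-\lm x_1)\,d\sigma_1(x_1)\,dx\big)^2$. The quantity being squared is $T(1_A,1_A)(\lm)$ on $\R^{d_1}$ \emph{with the $y$-variable as a parameter} — i.e. $\int_{[0,1]^{d_2}}T(1_A(\cdot,y),1_A(\cdot,y))(\lm)\,dy$. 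Applying Lemma \ref{GvN0} once more, this time on $\R^{d_1}$ (using $d_1\geq2$ and the $(\VE,\VE^4\lm)$-uniform distribution of $B_1$, transferred to most $y$-slices), gives $\iint F(x,x-\lm x_1)\,d\sigma_1(x_1)\,dx = \iint \big(\int 1_A(\cdot,y)\big)^2 dy + O(\cdots) \geq \B_1^2\B_2^2\A^2 - O(\cdots)$ by another Cauchy--Schwarz/positivity step (here $\int_{[0,1]^{d_1}}1_A(x,y)\,dx$ has $y$-average $\A\B_1\B_2$, relative to the product measure of mass $\B_1\B_2$... so one must be careful normalizing by $\B_1^2\B_2^2$). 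Thus the normalized main term is $\geq \A^4$, and the dichotomy is: either all the $O(\cdots)$ error terms are $\leq \tfrac12\A^4\B_1^2\B_2^2$ — in which case the first alternative of the Proposition holds — or one of them is large.

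The third step is to convert a large error term into a density increment. The errors are of three types: (a) the $L^1$-mass of pairs $(x,x')$ (resp. slices $y$) that fail to be uniformly distributed; (b) the $c^{-1/6}\VE'^{2/3}$-type terms from Lemma \ref{GvN0}; and (c) the defect in the two Cauchy--Schwarz steps. Types (b) and (c) are genuinely $o(1)$ as $\VE\to0$ and cannot be large once $\VE\ll c\B_1^6\B_2^6\A^{32}$ — so in fact the only way the first alternative can fail is through type (a), i.e. a positive-measure collection of slices on which $A$ is \emph{not} $(\VE',\VE'^4\lm)$-uniformly distributed. But non-uniform distribution of $1_A$ restricted to such a slice means, by definition of $(\VE',L)$-uniform distribution and (\ref{relate}), that there is a cube $Q$ of side $\VE'^4\lm\asymp\VE^4\lm$ (up to the density losses, which is why the statement only asks for \emph{some} $c'>0$ and side-length $\VE^4\lm$ — one takes $\VE'=\VE$ and absorbs the polynomial loss into $c'$) on which the local density of $A$ exceeds its global density by $\gtrsim\VE'$, i.e. by $\gtrsim\A^{32}$ times an absolute constant; taking $B_i'$ to be $B_i$ intersected with the corresponding cubes $Q_i$ gives the second alternative. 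The bookkeeping of exactly which density powers appear — tracking $\A$, $\B_1$, $\B_2$ through two nested applications of Lemma \ref{GvN0}, two Cauchy--Schwarz steps, and the transfer of uniform distribution from $B_i$ to slices — is the main obstacle, and is what dictates the precise exponent $\A^{32}$ and the constraint $\VE\ll c\B_1^6\B_2^6\A^{32}$; none of the individual steps is deep, but getting the exponents to close requires care.
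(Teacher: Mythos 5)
There is a genuine gap, and it sits exactly where your sketch says the conclusion is immediate: the passage from "failure of the main-term estimate" to the second alternative. First, the size of the increment. You set the non-uniformity threshold for the slices at $\VE'=\VE$ and then assert the resulting local density excess is "$\gtrsim\VE'$, i.e.\ $\gtrsim\A^{32}$". But the hypothesis is $\VE\ll c\,\B_1^6\B_2^6\A^{32}$, so an increment of size $O(\VE)$ is far \emph{smaller} than $\A^{32}$, not larger; you have the inequality backwards. An $\VE$-sized increment is useless for the density-increment iteration behind Proposition \ref{Propn1}, which needs a gain $c'\A^{32}$ depending only on $\A$. The correct structure (which is what the paper does) is to set the threshold at scale $\A^{4}$ in a norm of the balanced function $f_A\nu$ that controls the count — the $\Box(\VE^4\lm)$ norm — so that either $\|f_A\nu\|_{\Box(\VE^4\lm)}\ll\A^4$, in which case the relative von Neumann inequality (Lemma \ref{GvN1}) plus Proposition \ref{Propn0} give the first alternative, or $\|f_A\nu\|_{\Box(\VE^4\lm)}\gg\A^4$, in which case an inverse theorem yields an increment of size $\sim(\A^4)^8=\A^{32}$. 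Second, and independently, converting non-uniformity into the second alternative is not "by definition": the Proposition demands increased density of $A$ on a product $B_1'\times B_2'$ with \emph{both} factors inside cubes of side $\VE^4\lm$. Slice-wise non-uniformity of $y\mapsto 1_A(x,y)1_A(x',y)$ (with $x,x'$ a distance $\lm\gg\VE^4\lm$ apart) only gives, for individual $x$'s, a local deviation — possibly a deficit — in the $y$-variable; producing a genuine product increment requires localizing in the $x$-variable as well, pigeonholing a common cube $Q_2$ over a positive-measure set of $x$ in a single small cube $Q_1$, and a sign/level-set case analysis exploiting that $f_A$ has mean zero. That is precisely the content and the work of Theorem \ref{InvThm} in the paper (the decomposition into $f^{\pm}$, the level sets $U_j,V_{j'}$, and the $I_{j,j'}$ case analysis), which your outline omits entirely.

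There is also a quantitative problem in your step 2: with unweighted Cauchy--Schwarz the main term you reach is $(\A\B_1\B_2)^4$, so after dividing by $\B_1^2\B_2^2$ you get $\A^4\B_1^2\B_2^2$, which falls short of the required $\tfrac12\A^4$ by the factor $\B_1^2\B_2^2$. To avoid this loss the whole argument must be run relative to $\nu_1\otimes\nu_2$: Cauchy--Schwarz weighted by $\nu_1(x)\nu_1(x-\lm x_1)$ and $\nu_2(y)\nu_2(y-\lm y_1)$, with the weight totals $T(\nu_1,\nu_1)(\lm)$ and $T(\nu_2,\nu_2)(c\lm)$ evaluated as $1+o(1)$ via Proposition \ref{Propn0} using the uniform distribution of $B_1,B_2$ — note that uniform distribution of $B_1,B_2$ does \emph{not} transfer to the slices of the arbitrary subset $A$, so your "most slices inherit uniform distribution" step is unfounded; the errors are instead controlled by $U^1$/box norms of the balanced function, which is exactly the dichotomy. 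Executed correctly, your fibered Cauchy--Schwarz scheme becomes the paper's proof of the relative generalized von Neumann inequality (two weighted Cauchy--Schwarz applications reducing to the $\Box(\VE^4\lm)$ norm, via Lemma \ref{GvN0withB}), and it must then be supplemented by the inverse theorem; as written, the proposal does not close.
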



\begin{propn}[Dichotomy for Product of Simplices]\label{part11}

For $i=1,2$
let $B_i\subseteq[0,1]^{d_i}$ with $d_i\geq k_i+3$ and $\B_i=|B_i|>0$  and
$\Delta_{k_i}=\{v_0^i,v^i_1,v^i_2,\dots,v^i_{k_i}\}$ be a non-degenerate simplex of dimension $k_i$ with $v_0^i=0$
and \[c_{\Delta_{k_i}}=\min_{1\leq j\leq k_i}\text{\emph{dist}}(v^i_j,\text{\emph{span}}\left\{\{v^i_1,\dots,v^i_{k_i}\}\setminus v^i_j\right\})\leq1.\]

If $A\subseteq B_1\times B_2$ with $|A|=\A\B_1\B_2>0$ and 
\[0<\lm\leq\VE\ll_{k_1,k_2}(c_{\Delta_{k_1}}c_{\Delta_{k_2}})^{2}(\B_1^{k_1+1}\B_2^{k_2+1}\A^{(k_1+1)(k_2+1)})^{16}\]
then either
\bee
\frac{1}{\B_1^{k_1+1}\B_2^{k_2+1}}\iiiint \prod_{i=0}^{k_1}\prod_{j=0}^{k_2}1_A(x-\lm\cdot U_1(v^1_i),y-\lm\cdot U_2(v^2_j))
\,d\mu_1(U_1)\,d\mu_2(U_2)\,dx\,dy\geq\frac{1}{2}\A^{(k_1+1)(k_2+1)}\eee
or there exist cubes $Q_i\subseteq[0,1]^{d_i}$ of side-length $\VE^4\lm$, sets $B_i'$  in $Q_i$, and $c'>0$ for which
\bee
\frac{|A\cap(B_1'\times B_2')|}{|B_1'\times B_2'|}\geq \A+c'\,\A^{8(k_1+1)(k_2+1)}.
\eee
provided $B_1$ and $B_2$ are $(\VE,\VE^4\lm)$-uniformly distributed subsets of $[0,1]^{d_1}$ and $[0,1]^{d_2}$ respectively.
\end{propn}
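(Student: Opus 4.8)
The plan is a one-step $L^2$ density increment. First I would set $N=(k_1+1)(k_2+1)$ and decompose $1_A=\A\,1_{B_1}\otimes 1_{B_2}+f_A$, where $f_A:=1_A-\A\,1_{B_1\times B_2}$ satisfies $\|f_A\|_\infty\le 1$ and $\int f_A=|A|-\A\,|B_1\times B_2|=0$. The quantity to be estimated is an $N$-linear form in the ``corner'' functions indexed by $(i,j)$ with $0\le i\le k_1$, $0\le j\le k_2$; expanding every slot by multilinearity produces $2^N$ terms. In the pure term every $1_A$ is replaced by $\A\,1_{B_1}(x-\lm U_1(v^1_i))\,1_{B_2}(y-\lm U_2(v^2_j))$, and since the $B_i$-factors are idempotent this term factors as
\[
\A^{N}\,T_{\Delta_{k_1}}(1_{B_1},\dots,1_{B_1})(\lm)\cdot T_{\Delta_{k_2}}(1_{B_2},\dots,1_{B_2})(\lm).
\]
Since each $B_i$ is $(\VE,\VE^4\lm)$-uniformly distributed, Proposition~\ref{Propn00} evaluates each factor as $\B_i^{k_i+1}+O_{k_i}(c_{\Delta_{k_i}}^{-1/6}\VE^{2/3})$, so after dividing by $\B_1^{k_1+1}\B_2^{k_2+1}$ the pure term equals $\A^{N}$ up to an error that is negligible by the smallness hypothesis on $\VE$.

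The heart of the argument is a \emph{relative generalized von-Neumann inequality}: for functions bounded by $1$ and supported in $B_1\times B_2$, with one distinguished slot occupied by $f_A$, the corresponding $N$-linear form should be at most
\[
C_{k_1,k_2}\,\B_1^{-a_1}\B_2^{-a_2}\,\|f_A\|_{U^1(\VE^4\lm)}^{1/4}+O_{k_1,k_2}\big((c_{\Delta_{k_1}}c_{\Delta_{k_2}})^{-c_0}\,\VE^{c_1}\big),
\]
where the $U^1$-norm is taken in the ambient cube $[0,1]^{d_1}\times[0,1]^{d_2}=[0,1]^{d_1+d_2}$, whose defining cube $Q_{\VE^4\lm}$ factors as $Q_1\times Q_2$ with $Q_i\subseteq[0,1]^{d_i}$. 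I would prove this by importing the maneuvers of Section~\ref{newsimplices}: fixing the $x$-data, peel the last point of the $y$-simplex off by Cauchy--Schwarz followed by the angular $(\sin\theta)^{d_2-k_2-1}$ expansion used in Section~\ref{direct}, reducing matters to an average over $y$-cubes of an $x$-simplex form still carrying the distinguished $f_A$, then repeat the same step in the $x$-variables; the two angular decompositions each cost a square root (hence the exponent $1/4$), and the surviving kernel is compared with $\widehat{\psi}$ exactly as in \eqref{key}. Uniform distribution of $B_1$ and $B_2$ is used again here to control the $B_i$-indicators appearing in the impure terms, and the hypothesis $d_i\ge k_i+3$ is what leaves enough room in each factor to run these Cauchy--Schwarz steps relative to $B_i$.

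The dichotomy then follows. Each of the $2^N-1$ impure terms has a factor $f_A$, so each is bounded by the displayed quantity. If $\|f_A\|_{U^1(\VE^4\lm)}\le c_2\big(\B_1^{k_1+1}\B_2^{k_2+1}\A^{N}\big)^{4}$ for a suitably small $c_2=c_2(k_1,k_2)$, then the impure terms together contribute less than $\tfrac12\A^N$ to the normalized count (the $\VE$-errors absorbed by the hypothesis on $\VE$), giving the first alternative. Otherwise $\|f_A\|_{U^1(\VE^4\lm)}\gtrsim_{k_1,k_2}\big(\B_1^{k_1+1}\B_2^{k_2+1}\A^{N}\big)^{4}$; since $\|f_A\|_{U^1(\VE^4\lm)}^2$ is the $L^2$-average over base points $(s,t)$ of the squared average of $f_A$ over $(s,t)+(Q_1\times Q_2)$ and $\int f_A=0$, the usual $L^1$-versus-$L^2$ argument (using $\|f_A\|_\infty\le1$, so the positive part of these cube-averages carries mass $\gtrsim\|f_A\|_{U^1(\VE^4\lm)}^2$) produces a base point, hence cubes $Q_1\subseteq[0,1]^{d_1}$ and $Q_2\subseteq[0,1]^{d_2}$ of side $\VE^4\lm$, with $|Q_1\times Q_2|^{-1}\int_{Q_1\times Q_2}f_A\gtrsim_{k_1,k_2}\A^{8N}$. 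Unwinding $f_A=1_A-\A\,1_{B_1}\otimes1_{B_2}$ and setting $B_i':=B_i\cap Q_i$ gives $|A\cap(B_1'\times B_2')|\ge\A\,|B_1'|\,|B_2'|+c'\A^{8N}|Q_1|\,|Q_2|\ge(\A+c'\A^{8N})\,|B_1'\times B_2'|$, since $|B_i'|\le|Q_i|$; this is the second alternative with $c'=c'(k_1,k_2,d_1,d_2)$.

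The step I expect to be the main obstacle is the relative generalized von-Neumann inequality above: keeping track of which variables the distinguished $f_A$ occupies through each Cauchy--Schwarz, ensuring the leftover error is controlled by the \emph{single} ambient $U^1$-norm (so that the eventual increment lives on a genuine \emph{product} of cubes $B_1'\times B_2'$), and propagating the dependence on $\A$, $\B_1,\B_2$ and $c_{\Delta_{k_1}},c_{\Delta_{k_2}}$ through the finitely many Cauchy--Schwarz applications so as to land precisely on the ranges stated in the proposition.
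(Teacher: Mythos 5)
The central step of your plan---the ``relative generalized von-Neumann inequality'' bounding every impure term by $\|f_A\|_{U^1(\VE^4\lm)}^{1/4}$, with the $U^1$-norm taken on the ambient cube $[0,1]^{d_1+d_2}$---is false, and this is exactly where the product structure of the configuration bites. Take $B_1=[0,1]^{d_1}$, $B_2=[0,1]^{d_2}$, let $g:[0,1]^{d_1}\to\{\pm1\}$ and $h:[0,1]^{d_2}\to\{\pm1\}$ oscillate at a scale much smaller than $\VE^4\lm$ (so that all their averages over cubes of side $\VE^4\lm$ are $o(1)$), and set $A=\{(x,y):\,g(x)h(y)=1\}$; then $\A\approx\tfrac12$ and $f_A\approx\tfrac12\,g\otimes h$, so $\|f_A\|_{U^1(\VE^4\lm)}=o(1)$. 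Yet already in the rectangle case $k_1=k_2=1$ the impure term with $f_A$ in all four slots is $\tfrac1{16}\iiiint g(x)^2g(x-\lm x_1)^2h(y)^2h(y-\lm y_1)^2\,d\sigma_1(x_1)\,d\sigma_2(y_1)\,dx\,dy=\tfrac1{16}+O(\lm)$, which is bounded away from zero while your claimed bound is $o(1)+O(\VE^{c_1})$. In other words, tensor-product perturbations $g(x)h(y)$ are invisible to the ambient $U^1$-norm but fully visible to the count of product configurations; no amount of Cauchy--Schwarz bookkeeping can rescue a bound of the impure terms by $\|f_A\|_{U^1(\VE^4\lm)}$. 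This is precisely why the paper introduces the box norm $\|\cdot\|_{\Box(\VE^4\lm)}$ of (\ref{B1})--(\ref{B2}) (for which $g\otimes h$ has norm comparable to $1$): the correct statement is Lemma \ref{GvN11}, and its proof requires the relative simplex estimate of Lemma \ref{relative}---this, not a rerun of the Section \ref{newsimplices} argument in each factor, is where the hypotheses $d_i\geq k_i+3$ and the $(\VE,\VE^4\lm)$-uniform distribution of $B_1,B_2$ are actually consumed.

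The failure propagates to your second alternative. Once the dichotomy is run with the box norm, largeness of $\|f_A\nu\|_{\Box(\VE^4\lm)}$ does not hand you a cube on which $f_A$ has a large average: your ``$L^1$ versus $L^2$'' step is a $U^1$ fact and does not apply to the box norm. One needs a genuine inverse theorem (Theorem \ref{InvThm}): pigeonhole in the auxiliary variables, split the resulting one-variable functions into positive and negative parts, write them as averages of indicator functions and pigeonhole again, handling the sign ambiguity via complements and the mean-zero condition $\iint f\nu_1\nu_2=0$ together with the uniform distribution of $B_1,B_2$. The increment is then obtained on sets $B_i'\subseteq B_i\cap Q_i$ which are intersections with level sets inside the cubes, not on $B_i\cap Q_i$ itself as you assert. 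So while your overall architecture (generalized von-Neumann plus density increment on a product of cubes) matches the paper's, the choice of uniformity norm invalidates both halves of the argument; replacing $U^1$ by $\Box(\VE^4\lm)$ and supplying Lemma \ref{relative} and Theorem \ref{InvThm} (i.e., Corollaries \ref{CorGvN11} and \ref{InvCor11}) is the real content of the proof of Proposition \ref{part11}.
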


Sections \ref{SecPart1} and \ref{SecPart11} below are devoted to the proofs of
Propositions \ref{part1} and \ref{part11}. Central to each proof is an appropriate ``relative generalized von-Neumann inequality'', namely Lemmas \ref{GvN1} and \ref{GvN11}. These relative generalized von-Neumann inequalities in turn imply Corollaries \ref{CorGvN1} and \ref{CorGvN11}, which together with Corollaries \ref{InvCor1} and \ref{InvCor11} (which are both consequences of an appropriate common ``Inverse Theorem", namely Theorem \ref{InvThm}) immediately imply Propositions \ref{part1} and \ref{part11} respectively.

It is important to note that Propositions \ref{part1} and \ref{part11} are not in and of themselves sufficient to establish Propositions \ref{Propn1} and \ref{Propn11}. In order to apply a density increment argument one would need that the sets $B_1'$ and $B_2'$ produced by Propositions \ref{part1} and \ref{part11}, for which $A$ has increased density on $B_1'\times B_2'$, were $(\eta,L')$-uniformly distributed for a sufficiently small $\eta$ and for $L'$ attached to some of the $\lm_j$'s on $Q_1$ and $Q_2$ respectively, which they simply may not be.
In Section \ref{SecPart2} we complete the proofs of Proposition \ref{Propn1} and \ref{Propn11} by showing that we can obtain suitably uniformly distributed sets $B_1'$ and $B_2'$ by appealing to a version of Szemer\'edi's Regularity Lemma \cite{Sz}  adapted to a sequence of scales.


\section{Proof of Proposition \ref{part1}
}\label{SecPart1}


At the heart of our proof of Proposition \ref{part1} will be an appropriate ``relative generalized von-Neumann inequality for rectangles", namely Lemma \ref{GvN1} below.  This result, together with a companion ``Inverse Theorem" (Theorem \ref{InvThm} below) and Proposition \ref{Propn0} will ultimately furnish a proof of Proposition \ref{part1}.

Throughout this section we fix $B_i\subseteq[0,1]^{d_i}$ with $d_i\geq 2$ to be arbitrary sets with $\B_i=|B_i|>0$ for $i=1,2$.


\subsection{A Relative Generalized von-Neumann Inequality for Distances and Rectangles}

 \begin{defn}[A Counting Function for Rectangles]
For any $0<c\leq1$, $0<\lm\ll1$ and functions \[f_{ij}:[0,1]^{d_1}\times [0,1]^{d_2}\to\R\] with $i,j\in\{0,1\}$  we define
\[T_{\Box_c}(\lm):=T_{\Box_c}(f_{00},f_{10},f_{01},f_{11})(\lm)\]
where
\be
T_{\Box_c}(\lm)=
\iiiint f_{00}(x,y)f_{10}(x-\lm x_1,y)f_{01}(x,y-c\lm y_1)f_{11}(x-\lm x_1,y-c\lm y_1)\,d\sigma_1(x_1)\,d\sigma_2(y_1)\,dx\,dy
\ee
\end{defn}

Note that if we let
\be
\nu(x,y)=\nu_1(x)^{1/2}\nu_2(y)^{1/2}
\ee
where
\be
\nu_1=\B_1^{-1}1_{B_1}\quad\text{and}\quad \nu_2=\B_2^{-1}1_{B_2}
\ee
then,
in light of Proposition \ref{Propn0}, we have
\be\label{observation}
T_{\Box_c}(\nu,\nu,\nu,\nu)(\lm)=T(\nu_1,\nu_1)(\lm)\cdot T(\nu_2,\nu_2)(c\lm)=1+O(\B_1^{-2}\B_2^{-2}c^{-1/6}\VE^{2/3})
\ee
for any $0<\lm\leq\VE\ll1$, provided $B_1$ and $B_2$ are $(\VE,\VE^4\lm)$-uniformly distributed subsets of $[0,1]^{d_1}$ and $[0,1]^{d_2}$ respectively.

\comment{
\begin{align*}
T_{B_1,B_2}&(f_{00},\dots,f_{k_1k_2})(\lm)\\
& =\idotsint \prod_{i=0}^{k_1}\prod_{j=0}^{k_2}f_{ij}(x_i,y_j)\,\Omega^{(k_1-1)}_{1,\lm}(\mathbf{x})\,\Omega^{(k_2-1)}_{2,\lm}(\mathbf{y})\,d\mu_{B_1}(x_1)\cdots d\mu_{B_1}(x_{k_1})\,d\mu_{B_2}(y_1)\cdots  d\mu_{B_2}(y_{k_2}).
\end{align*}
}

\begin{defn}[$\Box(L)$-norm]

For  $0<L\ll1$ and functions $f:[0,1]^{d_1}\times [0,1]^{d_2}\to\R$ we define
\be\label{B1}
\|f\|_{\Box(L)}^4=\int_{[0,1]^{d_1}}\int_{[0,1]^{d_2}}\|f\|_{\Box(L)(t_1,t_2)}^4\,dt_2\,dt_1
\ee
with
\be\label{B2}
\|f\|_{\Box(L)(t_1,t_2)}^4=\frac{1}{L^{2(d_1+d_2)}}\iiiint\limits_{\substack{x,x'\in t_1+Q_{1,L}\\ y,y'\in t_2+Q_{2,L}}} f(x,y)f(x',y)f(x,y')f(x',y')\,\,dx'\,dx\,dy'\,dy
\ee
where $Q_{i,L}=[-L/2,L/2]^{d_i}$ for $i=1,2$.
\end{defn}

As before it is a straightforward but important observation that $\|f\|_{\Box(L)}^4$ equals
\be\label{almost2}
\iiiint f(x,y)f(x-x_1,y)f(x,y-y_1)f(x-x_1,y-y_1)
\psi_{1,L}(x_1)\psi_{2,L}(y_1)\,dx_1\,dx\,dy_1\,dy+O(L)
\ee
where
$
\psi_{i,L}=L^{-2d_i}\,1_{Q_{i,L}}*1_{Q_{i,L}}.
$


\smallskip

In this setting we have the following ``generalized von-Neumann inequality" relative to $B_1\times B_2$.

\begin{lem}[Generalized von-Neumann for Rectangles relative to $B_1\times B_2$]\label{GvN1} 

Let $0<c\leq1$ and
$
\nu=\nu_1^{1/2}\otimes\nu_2^{1/2}
$
where
$
\nu_1=\B_1^{-1}1_{B_1}$ and $\nu_2=\B_2^{-1}1_{B_2}
$.
For any 
 $0<\VE,\lm\ll1$ and functions
\[f_{ij}:[0,1]^{d_1}\times [0,1]^{d_2}\to[-1,1]\]
with $i,j\in\{0,1\}$ we have
\bee
|T_{\Box_c}(f_{00}\nu,f_{10}\nu,f_{01}\nu,f_{11}\nu)(\lm)|\leq \prod_{i,j\in\{0,1\}}\!\! \|f_{ij}\nu\|_{\Box(\VE^4\lm)} + O(\B_1^{-1}\B_2^{-1}c^{-1/24}\VE^{1/6}).
\eee
\end{lem}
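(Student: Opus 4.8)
The plan is to mimic the iterative Cauchy--Schwarz argument behind the classical box (Gowers $U^2$) inequality, but carried out \emph{one variable group at a time} so that the two sphere integrations are handled by an application of the Fourier-decay estimates already used in the proof of Lemma \ref{GvN0}. First I would fix the $y$-variables and the point $y_1$ and regard
\[
F_{y,y_1}(x) := \iint f_{00}\nu(x,y)\, f_{10}\nu(x-\lm x_1,y)\, f_{01}\nu(x,y-c\lm y_1)\, f_{11}\nu(x-\lm x_1,y-c\lm y_1)\,d\sigma_1(x_1)
\]
so that $T_{\Box_c}(\lm) = \int\!\!\int F_{y,y_1}(x)\,d\sigma_2(y_1)\,dx\,dy$. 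Pulling the functions that do not involve $x_1$ out front and applying Cauchy--Schwarz in $x$ (against the measure $\nu_1(x)\,dx$, using $\int\nu_1 = 1$), followed by Parseval in the $x$-variable and the bound $|\widehat{\sigma_1}(\lm\xi)| \le \widehat{\psi_1}(\VE^4\lm\xi) + O(\min\{\VE^4\lm|\xi|, (\lm|\xi|)^{-1/2}\})$ exactly as in Lemma \ref{GvN0}, I would bound $|T_{\Box_c}(\lm)|^2$ by a quantity in which the pair $(x,x')$ now ranges over a cube of side $\VE^4\lm$ (the $\psi_1$ main term), plus an error $O(\B_1^{-2}\B_2^{-2}c^{-1/12}\VE^{2/3})$ coming from the dispersive part. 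This replaces the first sphere average $d\sigma_1$ by the box-average $\psi_{1,\VE^4\lm}$ at the cost of squaring.

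Next I would repeat the identical manoeuvre in the $y$-variables on the resulting expression: hold $x,x'$ fixed (now averaged over a small cube) and the two remaining $y$-slices, Cauchy--Schwarz in $y$ against $\nu_2$, Parseval in $y$, and insert $|\widehat{\sigma_2}(c\lm\eta)| \le \widehat{\psi_2}(\VE^4\lm\eta) + O(\min\{\VE^4\lm|\eta|, (c\lm|\eta|)^{-1/2}\})$. The dispersive term here contributes $O(c^{-1/12}\VE^{2/3})$ after absorbing the $\B_i$ factors, and because this step squares again, the net exponent on $\VE$ becomes $2/3 \cdot \frac12 \cdot \frac12 = 1/6$ and the power of $c^{-1}$ becomes $1/24$, matching the stated error $O(\B_1^{-1}\B_2^{-1}c^{-1/24}\VE^{1/6})$. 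After both steps the main term is precisely
\[
\iiiint (f_{ij}\nu)(x,y)(f_{ij}\nu)(x',y)(f_{ij}\nu)(x,y')(f_{ij}\nu)(x',y')\,\psi_{1,\VE^4\lm}(x-x')\,\psi_{2,\VE^4\lm}(y-y')\,dx'\,dx\,dy'\,dy + O(\VE^4\lm),
\]
and by the observation \eqref{almost2} this equals $\|f_{ij}\nu\|_{\Box(\VE^4\lm)}^4$ up to $O(\VE^4\lm)$ — but with four \emph{different} functions $f_{ij}$ in the four slots, which after two Cauchy--Schwarz steps have been symmetrized so that each $f_{ij}$ appears once; taking fourth roots gives the product $\prod_{i,j}\|f_{ij}\nu\|_{\Box(\VE^4\lm)}$.

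The bookkeeping I expect to be the main obstacle is making the two successive Cauchy--Schwarz applications genuinely produce the \emph{product} of the four distinct box-norms rather than a single one: one must be careful about which function lands in which of the four corners at each stage, and apply Cauchy--Schwarz with the correct grouping (as in the standard proof of the Gowers--Cauchy--Schwarz / box inequality), so that no $f_{ij}$ is double-counted and the four resulting factors are exactly $\|f_{00}\nu\|_{\Box}, \|f_{10}\nu\|_{\Box}, \|f_{01}\nu\|_{\Box}, \|f_{11}\nu\|_{\Box}$. The analytic inputs — Parseval, the $|\widehat{\sigma_i}|$ decay, and the elementary $|1-\widehat{\psi_i}(\xi)| \le \min\{1, C|\xi|\}$ bound — are all routine once the combinatorial structure is set up correctly; the only point requiring a little care is tracking how the $\B_1^{-1},\B_2^{-1}$ normalizing factors (which make $\nu$ a probability density in each variable) propagate through the two squarings so that the final error has the advertised $\B_1^{-1}\B_2^{-1}$ and not a worse power.
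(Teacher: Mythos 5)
Your proposal is correct and takes essentially the same route as the paper: two applications of Cauchy--Schwarz interleaved with the Parseval/sphere-decay step that replaces each spherical average by the box kernel $\psi_{\VE^4\lm}$, the only differences being that the paper packages that Fourier step as a relative version of Lemma \ref{GvN0} (Lemma \ref{GvN0withB}) and handles the $y$-sphere at scale $c\lm$ before the $x$-sphere, whereas you reverse the order and argue inline. The Gowers--Cauchy--Schwarz bookkeeping you flag at the end is resolved exactly as you anticipate, the two squarings giving $|T_{\Box_c}|^4\leq\prod_{i,j\in\{0,1\}}\bigl(\|f_{ij}\nu\|_{\Box(\VE^4\lm)}^4+O(\VE^4\lm)\bigr)+O(\B_1^{-4}\B_2^{-4}c^{-1/6}\VE^{2/3})$, from which the stated bound follows by taking fourth roots.
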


It is easy to see that Lemma \ref{GvN1}, combined with Proposition \ref{Propn0}, gives the following
\begin{cor}\label{CorGvN1}
Let $0<c\leq1$, $0<\A,\B_1,\B_2\leq 1$ and $0<\lm\leq\VE\ll c\B_1^{6}\B_2^{6}\A^{24}$.

If $A\subseteq B_1\times B_2\subseteq[0,1]^{d_1}\times[0,1]^{d_2}$ with $|A|=\A\B_1\B_2$ and 
$
\|f_A\nu\|_{\Box(\VE^4\lm)}\ll\A^{4},
$
then
\bee
T_{\Box_c}(1_A\nu,1_A\nu,1_A\nu,1_A\nu)(\lm)\geq\frac{1}{2}\A^{4}
\eee
provided $B_1$ and $B_2$ are $(\VE,\VE^4\lm)$-uniformly distributed subsets of $[0,1]^{d_1}$ and $[0,1]^{d_2}$ respectively.
\end{cor}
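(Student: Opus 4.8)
The plan is to split $1_A = (1_A - \A\,1_{B_1\times B_2}) + \A\,1_{B_1\times B_2}$ and expand $T_{\Box_c}(1_A\nu,\ldots,1_A\nu)(\lm)$ as a sum of $2^4=16$ terms, one "main" term with all four factors equal to $\A\,1_{B_1\times B_2}\nu = \A\,\nu^2\cdot(\B_1\B_2)$ — more precisely, writing $1_A\nu = f_A\nu + \A\,1_{B_1\times B_2}\nu$ where $f_A\nu$ is the rescaled balanced function relative to $\nu$ — and $15$ "error" terms each containing at least one factor of $f_A\nu$. The main term equals $\A^4\, T_{\Box_c}(\nu,\nu,\nu,\nu)(\lm)$ up to the normalization built into $\nu$, and by the observation (\ref{observation}) this is $\A^4\bigl(1 + O(\B_1^{-2}\B_2^{-2}c^{-1/6}\VE^{2/3})\bigr)$, which is $\A^4 + O(\A^4\B_1^{-2}\B_2^{-2}c^{-1/6}\VE^{2/3})$; under the hypothesis $\lm \le \VE \ll c\B_1^6\B_2^6\A^{24}$ this error is $\ll \A^4 \cdot \A^{16}$, hence negligible compared to $\A^4$.

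For the $15$ error terms I would apply Lemma \ref{GvN1}. Each such term is of the form $T_{\Box_c}(g_{00}\nu, g_{10}\nu, g_{01}\nu, g_{11}\nu)(\lm)$ where each $g_{ij}$ is either $f_A$ (bounded by $1$ in absolute value) or $\A\,1_{B_1\times B_2}$ (also bounded by $1$ since $\A \le 1$), and at least one $g_{ij} = f_A$. Lemma \ref{GvN1} bounds the absolute value of such a term by $\prod_{i,j}\|g_{ij}\nu\|_{\Box(\VE^4\lm)} + O(\B_1^{-1}\B_2^{-1}c^{-1/24}\VE^{1/6})$. On the factor where $g_{ij}=f_A$ we use the hypothesis $\|f_A\nu\|_{\Box(\VE^4\lm)}\ll\A^4$; on the remaining factors we bound $\|g_{ij}\nu\|_{\Box(\VE^4\lm)} \le 1 + O(\VE^4\lm/\ldots)$ — this requires checking that $\|\,1_{B_1\times B_2}\nu\|_{\Box(\VE^4\lm)}$ and $\|f_A\nu\|_{\Box(\VE^4\lm)}$ are $O(1)$, which follows from (\ref{almost2}), the fact that $|g_{ij}\nu|\le\nu$, and the $(\VE,\VE^4\lm)$-uniform distribution of $B_1$ and $B_2$ (which controls $\|\nu\|_{\Box(\VE^4\lm)}^4 = \|\nu_1\|_{U^1}^2\|\nu_2\|_{U^1}^2 + O(\VE^4\lm) = 1 + O(\VE^2)$ via (\ref{relate}) and the tensor structure). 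Thus each of the $15$ error terms is $O(\A^4) + O(\B_1^{-1}\B_2^{-1}c^{-1/24}\VE^{1/6})$, and summing and absorbing the implicit constants, the total error contribution is $\le \tfrac{1}{4}\A^4$ provided the implied constant in $\|f_A\nu\|_{\Box(\VE^4\lm)}\ll\A^4$ is taken small enough and $\VE \ll c\B_1^6\B_2^6\A^{24}$ (so that $\B_1^{-1}\B_2^{-1}c^{-1/24}\VE^{1/6} \ll \A^4$). Combining the main term and the error estimate gives $T_{\Box_c}(1_A\nu,\ldots)(\lm) \ge \A^4 - \tfrac14\A^4 - \tfrac14\A^4 = \tfrac12\A^4$.

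The main obstacle I anticipate is bookkeeping rather than conceptual: one must be careful that the $\Box(\VE^4\lm)$-norms of the factors that are \emph{not} $f_A$ are genuinely $O(1)$ and not merely finite, since there are $15$ error terms and the constants must not blow up — this is where the hypothesis that $B_1, B_2$ be $(\VE,\VE^4\lm)$-uniformly distributed is essential, as it converts the $\Box(\VE^4\lm)$-norm of $\nu$ into $1+o(1)$ rather than something merely bounded by a large constant depending on $\B_i$. A secondary point is tracking the precise power of $\A$ in the exponent of $\VE$: since the hypothesis on $\VE$ is stated with exponent $\A^{24}$, one should confirm that $\A^4 \cdot \A^{16} = \A^{20}$ and $\A^{24}$ both dominate the various error terms, which is why the quantitative threshold in the statement is comfortably stronger than strictly needed. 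Modulo these routine verifications, the argument is a standard telescoping of a multilinear form against its "expected" value using the generalized von-Neumann inequality.
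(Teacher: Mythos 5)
Your proposal is correct and follows essentially the same route as the paper: decompose $1_A\nu=f_A\nu+\A\nu$, telescope $T_{\Box_c}$ into a main term $\A^4T_{\Box_c}(\nu,\nu,\nu,\nu)(\lm)$ handled via (\ref{observation}) (i.e.\ Proposition \ref{Propn0} and the uniform distribution of $B_1,B_2$), and bound the fifteen remaining terms with Lemma \ref{GvN1} together with $\|f_A\nu\|_{\Box(\VE^4\lm)}\ll\A^4$. Your extra verification that the $\Box(\VE^4\lm)$-norms of the non-$f_A$ factors are $1+o(1)$ (via $|g\nu|\le\nu$, the tensor structure, and uniform distribution) is a detail the paper leaves implicit, and it is handled correctly, up to the harmless imprecision that the error there is $O(\VE/\B_i)$ rather than $O(\VE^2)$.
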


\begin{proof}[Proof of Corollary \ref{CorGvN1}]
It follows immediately from Lemma \ref{GvN1} that
\bee
\Bigl|T_{\Box_c}(1_A\nu,1_A\nu,1_A\nu,1_A\nu)(\lm)-\A^4T_{\Box_c}(\nu,\nu,\nu,\nu)(\lm)\Bigr|
\leq 15\,\|f_A\nu\|_{\Box(\VE^4\lm)}+ O(\B_1^{-1}\B_2^{-1}c^{-1/24}\VE^{1/6})
\eee
for any $0<\VE,\lm\ll1$, where $
f_A=1_A-\A1_{B_1\times B_2}
$.
 The result follows since, as noted in (\ref{observation}), the fact that $B_1$ and $B_2$ are $(\VE,\VE^4\lm)$-uniformly distributed subsets of $[0,1]^{d_1}$ and $[0,1]^{d_2}$ allows us to use Proposition \ref{Propn0} and conclude that
 \[T_{\Box_c}(\nu,\nu,\nu,\nu)(\lm)=1+O(\B_1^{-2}\B_2^{-2}c^{-1/6}\VE^{2/3})
\]
for any $0<\lm\leq\VE\ll1$, as required.
\end{proof}

\subsection{Proof of Lemma \ref{GvN1}}

The proof of Lemma \ref{GvN1} follows from two clever applications of Cauchy-Schwarz combined with the following relative version of Lemma \ref{GvN0}.


\begin{lem}[Relative Version of Lemma \ref{GvN0}]\label{GvN0withB}
Let $B\subseteq[0,1]^d$ with $d\geq2$ and $\B=|B|$.

For any $0<c\leq1$, $0<\VE, \lm\ll1$ and functions
$f_0,f_1:[0,1]^d\to[-1,1]$ we have
\bee
\left|T(f_0\nu,f_1\nu)(c\lm)\right|\leq\prod_{j\in\{0,1\}} \left(\iint f_j\nu(x)f_j\nu(x-x_1)\psi_{\VE^4\lm}(x_1)\,dx_1\,dx\right)^{1/2}\!\!+\,O(\B^{-1}c^{-1/6}\VE^{2/3}).
\eee
where $\nu=\B^{-1}1_B$.
\end{lem}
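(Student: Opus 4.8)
The plan is to mimic the Fourier-analytic argument used for Lemma \ref{GvN0}, but carrying the cutoff weights $\nu$ along for the ride. Writing $g_j=f_j\nu$, so that $g_j$ is supported on $B$ with $\|g_j\|_\infty\le\B^{-1}$, an application of Parseval followed by Cauchy--Schwarz gives
\bee
\left|T(g_0,g_1)(c\lm)\right|^2=\Bigl(\int_{\R^d}\widehat{g_0}(\xi)\overline{\widehat{g_1}(\xi)}\,\widehat{\sigma}(c\lm\xi)\,d\xi\Bigr)^2\le\prod_{j\in\{0,1\}}\int_{\R^d}|\widehat{g_j}(\xi)|^2\,|\widehat{\sigma}(c\lm\xi)|\,d\xi,
\eee
exactly as before. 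The only thing that changes is that $\|g_j\|_2^2=\int f_j^2\nu^2\le\B^{-2}|B|=\B^{-1}$ rather than $\le1$, which is precisely what accounts for the factor $\B^{-1}$ appearing in the error term.

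Next I would split $\widehat{\sigma}(c\lm\xi)=\widehat{\sigma}(c\lm\xi)\widehat{\psi}(\VE^4\lm\xi)+\widehat{\sigma}(c\lm\xi)\bigl(1-\widehat{\psi}(\VE^4\lm\xi)\bigr)$ and use, verbatim from the proof of Lemma \ref{GvN0}, the bounds $|\widehat\sigma(\xi)|\le\min\{1,C|\xi|^{-(d-1)/2}\}$ and $|1-\widehat\psi(\xi)|\le\min\{1,C|\xi|\}$, together with the elementary interpolation
\bee
\min\{\VE^4\lm|\xi|,(c\lm|\xi|)^{-1/2}\}\le c^{-1/3}\VE^{4/3}.
\eee
This yields $|\widehat\sigma(c\lm\xi)|\le\widehat\psi(\VE^4\lm\xi)+O(c^{-1/3}\VE^{4/3})$ pointwise (up to replacing $\VE^4$ by a slightly smaller power to absorb the constant, or simply absorbing $C$ into the $O$-notation, in keeping with the paper's conventions). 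Plugging this in and using $\int|\widehat{g_j}(\xi)|^2\,d\xi=\|g_j\|_2^2\le\B^{-1}$ to control the contribution of the second term, we get
\bee
\int_{\R^d}|\widehat{g_j}(\xi)|^2\,|\widehat\sigma(c\lm\xi)|\,d\xi\le\int_{\R^d}|\widehat{g_j}(\xi)|^2\,\widehat\psi(\VE^4\lm\xi)\,d\xi+O(\B^{-1}c^{-1/3}\VE^{4/3}).
\eee
A final application of Parseval identifies $\int|\widehat{g_j}(\xi)|^2\widehat\psi(\VE^4\lm\xi)\,d\xi=\iint g_j(x)g_j(x-x_1)\psi_{\VE^4\lm}(x_1)\,dx_1\,dx$, which is the quantity appearing inside the square root in the statement. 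Taking square roots of the product of the two factors, and using $\sqrt{a+b}\le\sqrt a+\sqrt b$ together with $\sqrt{\B^{-1}c^{-1/3}\VE^{4/3}}=\B^{-1/2}c^{-1/6}\VE^{2/3}$, produces an error of the shape $O(\B^{-1/2}c^{-1/6}\VE^{2/3})$; since $\B\le1$ this is dominated by $O(\B^{-1}c^{-1/6}\VE^{2/3})$ as claimed.

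I do not expect any serious obstacle here: the proof is a routine "weighted" rerun of Lemma \ref{GvN0}, and the only points requiring a modicum of care are bookkeeping ones — tracking that the two $L^2$ norms contribute the $\B^{-1}$ (so that after the square root one lands at $\B^{-1/2}$, hence safely within the stated $\B^{-1}$), and noting that the product of square roots is left unsimplified on purpose, since in the application to Lemma \ref{GvN1} these factors will be recognized (via \eqref{almostU1}) as $U^1(\VE^4\lm)$-type quantities attached to the functions $g_j$. The slight constant mismatch between $\widehat\psi(\VE^4\lm\xi)$ and $C\widehat\psi(\VE^4\lm\xi)$ is handled, as elsewhere in the paper, by the $\ll$/$O$ conventions.
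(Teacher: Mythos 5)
Your proposal is correct and follows essentially the same route as the paper, whose entire proof of this lemma reads ``Same as that for Lemma \ref{GvN0} above, but noting that $\|f_j\nu\|_2^2\leq\B^{-1}$'' --- i.e.\ exactly the weighted rerun of the Fourier argument you carry out. One tiny bookkeeping remark: when you take square roots of the product $(a_0+e)(a_1+e)$ the cross terms carry factors $\sqrt{a_j}\le\B^{-1/2}$, so the error is genuinely $O(\B^{-1}c^{-1/6}\VE^{2/3})$ rather than $O(\B^{-1/2}c^{-1/6}\VE^{2/3})$, which is precisely the bound stated in the lemma, so nothing is lost.
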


\begin{proof}
Same as that for Lemma \ref{GvN0} above, but noting that $\|f_j\nu\|_2^2\leq\B^{-1}$ for $j=0,1$.
\end{proof}

To prove Lemma \ref{GvN1} we first observe that
\bee
\left|T_{\Box_c}(f_{00}\nu,f_{10}\nu, f_{01}\nu,f_{11}\nu)(\lm)\right|
\leq\iint \left|T(g_0^{x,x_1}\nu_2,g_1^{x,x_1}\nu_2)(c\lm)\right|\,\nu_1(x)\nu_1(x-\lm x_1) \,d\sigma_1(x_1)\,dx
\eee
where
\begin{align*}
g_0^{x,x_1}(y)&=f_{00}(x,y)f_{10}(x-\lm x_1,y)\\
g_1^{x,x_1}(y)&=f_{01}(x,y)f_{11}(x-\lm x_1,y).
\end{align*}

Applying Lemma \ref{GvN0withB} to $T(g_0^{x,x_1}\nu_2,g_1^{x,x_1}\nu_2)(c\lm)$ followed by an application of Cauchy-Schwarz (and switching the order of integration) shows that $|T_{\Box_c}(f_{00}\nu,\dots,f_{11}\nu)(\lm)|^2$ is majorized by
\[
\prod_{j\in\{0,1\}}\iint \left|T(h_{0j}^{y,y_1}\nu_1,h_{1j}^{y,y_1}\nu_1)(\lm)\right|\nu_2(y)\nu_2(y-\lm y_1) \psi_{2,\VE^4\lm}(y_1)\,dy_1\,dy+O(\B_1^{-2}\B_2^{-2}c^{-1/6}\VE^{2/3})
\]
where
\begin{align*}
h_{0j}^{y,y_1}(x)&=f_{0j}(x,y)f_{0j}(x,y-\lm y_1)\\
h_{1j}^{y,y_1}(x)&=f_{1j}(x,y)f_{1j}(x,y-\lm y_1).
\end{align*}

Applying Lemma \ref{GvN0withB} once more, this time to $T(h_{0j}^{y,y_1}\nu_1,h_{1j}^{y,y_1}\nu_1)(\lm)$, followed by another application of Cauchy-Schwarz reveals that
$\left|T_{\Box_c}(f_{00}\nu,\dots,f_{11}\nu)(\lm)\right|^4$ is majorized by
\[
\prod_{i,j\in\{0,1\}}\iiiint h_{ij}^{y,y_1}\nu_1(x)h_{ij}^{y,y_1}\nu_1(x-x_1)
\,\nu_2(y)\nu_2(y-\lm y_1) \psi_{1,\VE^4\lm}(x_1)\psi_{2,\VE^4\lm}(y_1)\,dx_1\,dx\,dy_1\,dy+O(\B_1^{-4}\B_2^{-4}c^{-1/6}\VE^{2/3})
\]

Since
\bee
h_{ij}^{y,y_1}\nu_1(x)h_{ij}^{y,y_1}\nu_1(x-x_1)\nu_2(y)\nu_2(y-\lm y_1)=f_{ij}\nu(x,y)f_{ij}\nu(x-x_1,y)f_{ij}\nu(x,y-y_1)f_{ij}\nu(x-x_1,y-y_1)\eee
the result follows in light of observation (\ref{almost2}).
\qed


\medskip

\subsection{Inverse Theorem for the $\Box(L)$-norm}

The final piece in the proof of Proposition \ref{part1} is the following

\begin{thm}[Inverse Theorem]\label{InvThm} Let $0<\eta, \B_1,\B_2\leq1$ and $B_1$ and $B_2$ be $(\VE,L)$-uniformly distributed subsets of $[0,1]^{d_1}$ and $[0,1]^{d_2}$  with $0<L\leq\VE\ll \eta^8\,\B_1^{2}\B_2^{2}$.
If $f:[0,1]^{d_1}\times [0,1]^{d_2}\to[-1,1]$ satisfies
\be\label{assumption}
\iint f(x,y)\nu_1(x)\nu_2(y)\,dx\,dy=0\quad\quad\text{and}\quad\quad
\|f\nu\|_{\Box(L)}\geq\eta
\ee
with
$
\nu=\nu_1^{1/2}\otimes\nu_2^{1/2}
$
and
$
\nu_1=\B_1^{-1}1_{B_1}$ and $\nu_2=\B_2^{-1}1_{B_2}
$,
then
there exist cubes $Q_i\subseteq[0,1]^{d_i}$ of side-length $L$ and sets $B_i'\subseteq B_i\cap Q_i$  such that
\be\label{3.25}
\frac{1}{L^{d_1+d_2}}\iint_{B_1'\times B_2'} f(x,y)\nu_1(x)\nu_2(y)\,dx\,dy\geq c\,\eta^8.
\ee
\end{thm}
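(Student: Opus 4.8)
\textbf{Proof plan for Theorem \ref{InvThm}.}

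The plan is to run a standard ``energy increment'' (or ``$L^2$ density increment'') argument on the product structure, exploiting the fact that the $\Box(L)$-norm is an averaged product of ``box'' expressions over cubes of side-length $L$. First I would unpack the hypothesis $\|f\nu\|_{\Box(L)}\geq\eta$ using the definition \eqref{B1}--\eqref{B2}: this means that, on a set of pairs $(t_1,t_2)$ of non-negligible measure (or at least on average), the localized quantity
\[
\|f\nu\|_{\Box(L)(t_1,t_2)}^4=\frac{1}{L^{2(d_1+d_2)}}\iiiint\limits_{\substack{x,x'\in t_1+Q_{1,L}\\ y,y'\in t_2+Q_{2,L}}} f\nu(x,y)\,f\nu(x',y)\,f\nu(x,y')\,f\nu(x',y')\,dx'\,dx\,dy'\,dy
\]
is at least $\eta^4$. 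For each fixed $(t_1,t_2)$ this is exactly $\bigl|\!\int g_{t_1,t_2}\bigr|^2$-type quantity: writing $F_{t_1,t_2}(x,y)=f\nu(x,y)$ restricted to $(t_1+Q_{1,L})\times(t_2+Q_{2,L})$, the box expression factors after integrating in the ``primed'' variables as
\[
\Bigl(\tfrac{1}{L^{d_1+d_2}}\!\!\iint\limits_{t_1+Q_{1,L}}\!\!\Bigl(\tfrac{1}{L^{d_2}}\!\!\int\limits_{t_2+Q_{2,L}}\!\!F_{t_1,t_2}(x,y)\,dy\Bigr)^{\!2}\,dx\Bigr)^{\!2}\cdot L^{\text{(appropriate power)}},
\]
so positivity is automatic and a large $\Box(L)$-norm forces the existence of cubes $Q_1=t_1+Q_{1,L}$, $Q_2=t_2+Q_{2,L}$ on which the ``conditional averages'' of $f\nu$ are large in $L^2$.

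The second step is to convert this $L^2$-largeness into the claimed one-sided lower bound \eqref{3.25} for $f\nu$ integrated against the indicators of suitable $B_i'\subseteq B_i\cap Q_i$. Here I would use the mean-zero hypothesis $\iint f\nu_1\nu_2=0$ together with the uniform distribution of $B_1,B_2$: the point of $(\VE,L)$-uniform distribution is precisely that $\nu_i$ has nearly constant mass $\approx L^{d_i}/\B_i$ (up to an $L^2$-error controlled by $\VE$) on cubes of side-length $L$, so the local density $|B_i\cap Q_i|/L^{d_i}$ is close to $\B_i$ for most $Q_i$, and the weight $\nu$ behaves like a harmless constant on $Q_1\times Q_2$. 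Thus the large $L^2$-mass of the conditional average $x\mapsto L^{-d_2}\int_{t_2+Q_{2,L}} f\nu(x,\cdot)$ can be localized: by Cauchy--Schwarz (or just by the elementary fact that a function with $\int h=0$ and $\int h^2\geq\delta$ on a probability space has $\int_{\{h>0\}} h\geq\sqrt{\delta}/2$), one extracts a subcube pair and sets $B_i'\subseteq B_i\cap Q_i$ — taken to be the superlevel sets where the relevant conditional average of $f$ is positive — on which $\frac{1}{L^{d_1+d_2}}\iint_{B_1'\times B_2'} f\,\nu_1\nu_2\geq c\eta^8$. The exponent $8$ and the smallness requirement $\VE\ll\eta^8\B_1^2\B_2^2$ arise from taking two square roots (once to pass from the fourth power $\|f\nu\|_{\Box(L)}^4\geq\eta^4$ to the $L^2$ statement, once to pass from $L^2$-largeness to the one-sided bound) and from absorbing the $O(\VE)$-errors coming from uniform distribution at each stage.

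The main obstacle I anticipate is the bookkeeping around the weight $\nu=\nu_1^{1/2}\otimes\nu_2^{1/2}$ rather than a clean indicator: one must be careful that the square-root weights, when multiplied out in the box expression, reassemble into $\nu_1\otimes\nu_2$ as in the product-of-simplices identity used in the proof of Lemma \ref{GvN1}, and that restricting the $x$- and $y$-integrations to the subcubes $Q_1,Q_2$ does not destroy the factorization. The uniform-distribution hypothesis is what makes this work: it guarantees that $\nu_i$ restricted to a typical cube of side $L$ is, after normalization, within $\VE$ (in mean square) of the uniform probability measure on that cube, so all the ``replace $\nu_i$ by a constant'' steps cost only $O(\VE)$, which is swallowed by $\eta^8\B_1^2\B_2^2$. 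Everything else is the routine pigeonholing of the averaged quantity \eqref{B1} to find a single good pair of cubes.
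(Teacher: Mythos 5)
Your plan has two genuine gaps, and they sit exactly where the real work of the theorem is. First, the claimed factorization of the localized box expression is false, and with it the assertion that a large $\Box(L)$-norm forces the conditional averages $h_{t_1,t_2}(x)=L^{-d_2}\int_{t_2+Q_{2,L}}f\nu(x,y)\,dy$ to be large in $L^2$. Integrating \eqref{B2} in the primed variables gives
$\|f\nu\|_{\Box(L)(t_1,t_2)}^4=L^{-2(d_1+d_2)}\iint_{y,y'}\bigl(\int_x f\nu(x,y)f\nu(x,y')\,dx\bigr)^2\,dy\,dy'$,
and by Cauchy--Schwarz this \emph{dominates} (after the natural normalizations) the square of $L^{-d_1}\int_x h_{t_1,t_2}(x)^2\,dx$; the implication you need goes in the opposite direction. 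Concretely, if $f\nu(x,y)=g_1(x)g_2(y)$ with $g_1,g_2$ (nearly) mean zero on the relevant cubes, the localized box norm is as large as possible while $h_{t_1,t_2}$ is negligible, so superlevel sets of conditional averages detect nothing. The correct extraction, as in the paper, is to pigeonhole in the primed variables $(x',y')=(x_1,y_1)$ of \eqref{B2}: the structured objects are then the functions $x\mapsto f(x,y_1)$ and $y\mapsto f(x_1,y)$, and splitting these into positive and negative parts, writing the resulting $[0,1]$-valued functions as averages of indicators (layer cake) and pigeonholing produces sets $U_1,V_1$ with $\bigl|\frac{1}{\B_1\B_2L^{d_1+d_2}}\iint f\,1_{U_1}1_{V_1}\bigr|\gtrsim\eta^4$, which is the type of correlation the conclusion requires.

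Second, your passage from $L^2$-largeness to the one-sided bound \eqref{3.25} treats the mean-zero hypothesis as if it held locally on the selected cube. It does not: \eqref{assumption} only makes the \emph{global} average vanish, while the local average $I(t_1,t_2)=\frac{1}{\B_1\B_2L^{d_1+d_2}}\iint_{(t_1+Q_L)\times(t_2+Q_L)}f\nu_1\nu_2$ can be very negative precisely on the cube where the box norm is large, so the elementary ``mean zero and $\int h^2\geq\delta$ implies $\int_{\{h>0\}}h\geq\sqrt\delta/2$'' lemma has no footing, and the sign of your extracted correlation is uncontrolled. The paper handles this by arguing by contradiction: assuming \eqref{3.25} fails for every choice gives the standing bound $I(t_1,t_2)\leq c\eta^8$ for all $(t_1,t_2)$ (take $B_i'=B_i\cap Q_i$), and then there is a dichotomy. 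Either some cube of the good set $G_{\eta,\VE}$ has $|I(t_1,t_2)|\leq\eta^4/2^9$, in which case the correlation with $U_1\times V_1$ cannot be swallowed by negativity: decomposing into the four quadrants $U_j\times V_{j'}$, whose sum is $I(t_1,t_2)$, forces one quadrant to give \eqref{3.25}. Or $I\leq-\eta^4/2^9$ on all of $G_{\eta,\VE}$, and then integrating $I$ over all $(t_1,t_2)$ and comparing with $\iint I\,dt_1\,dt_2=O(L)$ (this is the only place the global mean-zero and $L\leq\VE\ll\eta^8\B_1^2\B_2^2$ are used) yields a contradiction. Without this contrapositive bound on $I$ and the accompanying case analysis, the uniform-distribution bookkeeping you describe cannot produce the one-sided conclusion.
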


As a consequence of Theorem \ref{InvThm} we immediately obtain the following corollary which together with Corollary \ref{CorGvN1} implies Proposition \ref{part1}.

\begin{cor}\label{InvCor1}
Let $0<\A,\B_1,\B_2\leq 1$ and $B_1$ and $B_2$ be $(\VE,\VE^4\lm)$-uniformly distributed subsets of $[0,1]^{d_1}$ and $[0,1]^{d_2}$  with $0<\lm\leq\VE\ll\B_1^{2}\B_2^{2}\A^{32}$.

If $A\subseteq B_1\times B_2\subseteq[0,1]^{d_1}\times[0,1]^{d_2}$ with $|A|=\A\B_1\B_2$ and
\[\|f_A\nu\|_{\Box(\VE^4\lm)}\gg\A^{4}\]
with $f_A=1_A-\A1_{B_1\times B_2}$,
then there exist cubes $Q_i\subseteq[0,1]^{d_i}$ of side-length $\VE^4\lm$ and sets $B_i'$  in $Q_i$ for which
\bee
\frac{|A\cap(B_1'\times B_2')|}{|B_1'\times B_2'|}\geq \A+c\,\A^{32}.
\eee
\end{cor}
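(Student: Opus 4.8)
The plan is to read Corollary~\ref{InvCor1} off directly from the Inverse Theorem (Theorem~\ref{InvThm}), applied to the function $f=f_A$. First I would check that $f_A$ meets the hypotheses of Theorem~\ref{InvThm}. The function $f_A=1_A-\A1_{B_1\times B_2}$ is supported on $B_1\times B_2$ and takes values in $[-\A,1-\A]\subseteq[-1,1]$, so it is an admissible test function. The mean-zero condition is automatic: since $A\subseteq B_1\times B_2$ and $|A|=\A\B_1\B_2$, one has $\iint 1_A\,\nu_1\nu_2=(\B_1\B_2)^{-1}|A|=\A$ and $\iint 1_{B_1\times B_2}\,\nu_1\nu_2=1$, hence $\iint f_A\,\nu_1\nu_2=0$. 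Writing the hypothesis $\|f_A\nu\|_{\Box(\VE^4\lm)}\gg\A^4$ as $\|f_A\nu\|_{\Box(\VE^4\lm)}\geq\eta$ with $\eta=c_0\A^4$ for a suitable absolute $c_0>0$, the assumption $0<\lm\leq\VE\ll\B_1^2\B_2^2\A^{32}$ yields both $L:=\VE^4\lm\leq\VE$ (as $\lm,\VE\ll1$) and $\VE\ll\eta^8\B_1^2\B_2^2$, so Theorem~\ref{InvThm} applies with this $\eta$ and this $L$.

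Theorem~\ref{InvThm} then produces cubes $Q_i\subseteq[0,1]^{d_i}$ of side-length $L=\VE^4\lm$ and sets $B_i'\subseteq B_i\cap Q_i$ with
\bee
\frac{1}{L^{d_1+d_2}}\iint_{B_1'\times B_2'} f_A(x,y)\,\nu_1(x)\nu_2(y)\,dx\,dy\ \geq\ c\,\eta^8.
\eee
On $B_1'\times B_2'\subseteq B_1\times B_2$ we have $\nu_1\equiv\B_1^{-1}$, $\nu_2\equiv\B_2^{-1}$ and $1_{B_1\times B_2}\equiv1$, so the left-hand side equals $(\B_1\B_2)^{-1}L^{-(d_1+d_2)}\bigl(|A\cap(B_1'\times B_2')|-\A\,|B_1'\times B_2'|\bigr)$; since $\eta^8=c_0^8\A^{32}$ this rearranges to
\bee
|A\cap(B_1'\times B_2')|-\A\,|B_1'\times B_2'|\ \geq\ c\,\A^{32}\,\B_1\B_2\,L^{d_1+d_2}.
\eee

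Finally I would divide through by $|B_1'\times B_2'|$ to obtain the asserted relative density increment. For this one needs $|B_1'\times B_2'|\lesssim\B_1\B_2\,L^{d_1+d_2}$, which follows from $|B_i'|\leq|B_i\cap Q_i|$ together with the $(\VE,\VE^4\lm)$-uniform distribution of $B_i$: the mean-square deviation of the density of $B_i$ on cubes of side-length $L$ from $\B_i$ is $\leq\VE^2$, so the cube $Q_i$ (or, if necessary, a comparable ``good'' cube) satisfies $|B_i\cap Q_i|\leq(\B_i+O(\VE))L^{d_i}\lesssim\B_i L^{d_i}$, using $\VE\ll\B_i$ (a consequence of $\VE\ll\B_1^2\B_2^2\A^{32}$ and $\B_i\leq1$). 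Dividing then gives
\bee
\frac{|A\cap(B_1'\times B_2')|}{|B_1'\times B_2'|}\ \geq\ \A+c\,\A^{32},
\eee
with the bounded factor coming from $\B_1,\B_2$ absorbed into $c$ (harmless in the subsequent density-increment iteration, where these densities remain bounded below). Essentially everything here is bookkeeping once Theorem~\ref{InvThm} is in hand; the one step that genuinely requires care — and the only place the uniform distribution of $B_1$ and $B_2$ is used in this particular deduction — is this last passage from an additive lower bound on $|A\cap(B_1'\times B_2')|-\A|B_1'\times B_2'|$ to a true density increment, i.e. controlling $|B_1'\times B_2'|$ from above.
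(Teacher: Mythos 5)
Your overall route is exactly the intended one (the paper treats this corollary as an immediate consequence of Theorem \ref{InvThm}, applied to $f=f_A$ with $\eta\sim\A^4$ and $L=\VE^4\lm$; your verification of the mean-zero hypothesis and the calibration $\VE\ll\eta^8\B_1^2\B_2^2$ are fine). The problem is in the one step you yourself single out as the crux: passing from the additive bound to a genuine density increment by showing $|B_i\cap Q_i|\lesssim\B_iL^{d_i}$. Your justification of this is not valid: $(\VE,\VE^4\lm)$-uniform distribution is only a mean-square statement over all translates $t$, and it does not control the \emph{particular} cube $Q_i$ handed to you by the statement of Theorem \ref{InvThm} used as a black box. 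Since $L=\VE^4\lm$ can be far smaller than $\VE$, a set $B_i$ can be $(\VE,L)$-uniformly distributed and still completely fill a small exceptional family of $L$-cubes, and nothing in the bare statement of Theorem \ref{InvThm} prevents $Q_i$ from being such a cube. The parenthetical escape ``or, if necessary, a comparable good cube'' does not work either: the lower bound \eqref{3.25} is attached to the specific cubes and subsets $B_i'$ produced, and cannot be transported to a nearby cube. Without the upper bound on $|B_i\cap Q_i|$ the argument only yields
\bee
\frac{|A\cap(B_1'\times B_2')|}{|B_1'\times B_2'|}\geq \A+c\,\A^{32}\,\B_1\B_2,
\eee
which is weaker than the stated conclusion and would degrade in the subsequent density-increment iteration, where the ambient densities $\B_1,\B_2$ shrink at each zoom-in step.

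The repair is to use the \emph{proof} of Theorem \ref{InvThm} rather than its statement: in the main case the cubes there are chosen with $(t_1,t_2)\in G_{\eta,\VE}\subseteq G_\VE$, i.e.\ with $\bigl||B_i\cap(t_i+Q_L)|-\B_iL^{d_i}\bigr|\leq\VE^{1/2}L^{d_i}$, so that $|B_i\cap Q_i|\leq(\B_i+\VE^{1/2})L^{d_i}\leq2\B_iL^{d_i}$ once $\VE^{1/2}\ll\B_i$, which your hypothesis supplies; the only adjustment needed is in the trivial opening case of that proof (``if \eqref{3.25} already holds for some cubes\dots''), where one should insist on cubes from $G_\VE$ as well, absorbing the contribution of the exceptional cubes (measure $O(\VE)$, with $|I(t_1,t_2)|\leq\B_1^{-1}\B_2^{-1}$) into the error using $\VE\ll\eta^8\B_1\B_2$. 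Equivalently, one can record the property $B_i'\subseteq B_i\cap Q_i$ with $|B_i\cap Q_i|\leq2\B_iL^{d_i}$ as part of the conclusion of the Inverse Theorem. With that in hand your bookkeeping goes through verbatim and gives the increment $\A+c\,\A^{32}$ with an absolute constant, as claimed.
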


\begin{proof}[Proof of Theorem \ref{InvThm}]

If \eqref{3.25} holds for some cubes $Q_i:=t_i+Q_L$ and sets $B_i':=B_i\cap Q_i$, then Theorem \ref{InvThm} follows, so we may assume for all $t_1\in [0,1]^{d_1}$ and $t_2\in [0,1]^{d_2}$  that
\be\label{3.27}
I(t_1,t_2):=\frac{1}{\B_1\B_2L^{d_1+d_2}}\int_{t_1+Q_L}\int_{t_2+Q_L} f(x,y)\,dx\,dy\leq c\,\eta^8
\ee
with say $c=2^{-16}$.
It is then easy to see that this assumption, together with our assumption on the sets $B_i$, namely that
\[\int ||B_i\cap (t+Q_L)|-\B_i L^{d_i}|^2\,dt \leq \VE^2 L^{2d_i},\]
imply, via an easy averaging argument, that
\be\label{3.30}
|G_{\eta,\VE}|\geq \frac{\eta^4}{16}\quad\text{where}\quad
G_{\eta,\VE}=\left\{(t_1,t_2)\in G_\VE\,:\,  \|f\nu\|^4_{\Box(L)(t_1,t_2)}\geq \frac{\eta^4}{16}\right\}
\ee
and
\bee
G_\VE=\left\{(t_1,t_2);\ |B_i\cap (t_i+Q_L)-\B_i L^{d_i}|\leq \VE^{1/2} L^2\,\text{for $i=1,2$}\right\}.
\eee

We first show that if there exist $(t_1,t_2)\in G_{\eta,\VE}$ for which $|I(t_1,t_2)|\leq \eta^4/2^9$, then Theorem \ref{InvThm} holds. Indeed, by the pigeonhole principle, we see that given such a pair $(t_1,t_2)$ we may choose $x_1\in [0,1]^{d_1}$ and $y_1\in[0,1]^{d_2}$ so that
\be\label{3.31}
\left|\frac{1}{\B_1\B_2L^{d_1+d_2}} \int_{t_1+Q_L}\int_{t_2+Q_L} f(x_2,y_2)f(x_2,y_1)f(x_1,y_2)\,dx_2\,dy_2\,\right| \geq
\frac{\eta^4}{32}.
\ee
If we now write $f_{y_1}(x_2)=f(x_2,y_1)$, $f_{x_1}(y_2)=f(x_1,y_2)$ and decompose $f_{y_1}=f_{y_1}^+-f_{y_1}^-$ and $f_{x_1}=f_{x_1}^+-f_{x_1}^-$ into their respective positive and negative parts, then it follows that
\bee
\left|\frac{1}{\B_1\B_2L^{d_1+d_2}}\int_{t_1+Q_L}\int_{t_2+Q_L}f(x_2,y_2)g_1(x_2)g_2(y_2)\,dx_2\,dy_2\,\right| \geq
\frac{\eta^4}{2^7},
\eee
for some functions $g_i:[0,1]^{d_i}\to [0,1]$. Writing these functions as an average of indicator functions, namely
\[g_i(x)=\int_0^1 1_{\{g_i(x)\geq s\}}\,ds
\]
and appealing again to the  pigeonhole principle, we see that we may choose sets $U_1$ and $V_1$ so that
\be\label{8}
\left|\frac{1}{\B_1\B_2L^{d_1+d_2}} \int_{t_1+Q_L}\int_{t_2+Q_L} f(x_2,y_2)1_{U_1}(x_2)1_{V_1}(y_2)\,dx_2\,dy_2\,\right| \geq
\frac{\eta^4}{2^7}.
\ee
We now set $U_2=U_1^c$, $V_2=V_1^c$ and define, for $j,j'\in \{1,2\}$, the integrals
\bee
I_{j,j'}:= \frac{1}{\B_1\B_2L^{d_1+d_2}} \int_{t_1+Q_L}\int_{t_2+Q_L}f(x_2,y_2)1_{U_j}(x_2)1_{V_{j'}}(y_2)\,dx_2\,dy_2.\eee
Note that we know $|I_{1,1}|\geq \eta^4/2^7$ and if $I_{1,1}\geq \eta^4/2^7$ then \eqref{3.25} holds for the sets $B_1'=B_1\cap (t_1+Q_L)\cap U_1$ and $B_2'=B_2\cap (t_1+Q_L)\cap V_1$. We may therefore assume that $I_{1,1}\leq -\eta^4/2^7$, but this assumption,
together with the previous assumption that
\bee I(t_1,t_2)= I_{1,1}+I_{1,2}+I_{2,1}+I_{2,2}\geq -\eta^4/2^9\eee
immediately implies that $I_{i,j}\geq\eta^4/2^9$ for some $(j,j')\neq (1,1)$ and \eqref{3.25} again follows.

It remains to consider the case when $I(t_1,t_2)\leq -\eta^4/2^9$ for all $(t_1,t_2)\in G_{\eta,\VE}$. Then by \eqref{3.27} and \eqref{3.30}
\begin{align*}
\iint I(t_1,t_2)\,dt_1\,dt_2 &=  \iint_{G_{\eta,\VE}} I(t_1,t_2)\,dt_1\,dt_2 + \iint_{G_{\eta,\VE}^c} I(t_1,t_2)\,dt_1\,dt_2
\leq -\frac{\eta^4}{2^4}\,\frac{\eta^4}{2^9}\,+\,2\,\frac{\eta^8}{2^{16}}\,
\leq\,-\frac{\eta^8}{2^{15}}.
\end{align*}
While on the other hand
\begin{align*}
\iint I(t_1,t_2)\,dt_1\,dt_2 
=O(L)
\end{align*}
by the first assumption of (\ref{assumption}),
which is a contradiction. This proves the theorem.
\end{proof}

\section{Proof of Proposition \ref{part11}}\label{SecPart11}

An appropriate ``relative generalized von-Neumann inequality'' will again be central to our proof of Proposition \ref{part11}, specifically a ``relative generalized von-Neumann inequality for product of simplices". 

However, the true heart of the argument is in fact the analogous result for \emph{just} simplices, the proof of this ``relative generalized von-Neumann inequality for simplices" is necessarily  significantly more involved than the analogous relative result for distances (whose proof was essentially identical to the non-relative case) and it is here that our loss in dimension appears.

We fix non-degenerate simplices $\Delta_{k_i}=\{v_0^i,v^i_1,v^i_2,\dots,v^i_{k_i}\}$ of dimension $k_i$ with $v_0^i=0$ and
\[c_{\Delta_{k_i}}=\min_{1\leq j\leq k_i}\text{dist}(v^i_j,\text{span}\left\{\{v^i_1,\dots,v^i_{k_i}\}\setminus v^i_j\right\})\leq1\]
and
let $B_i\subseteq[0,1]^{d_i}$ with $d_i\geq k_i+3$ and $\B_i=|B_i|>0$ denote arbitrary sets, for $i=1,2$. 

In contrast to the proof of Proposition \ref{part1}, we will need to assume that our sets $B_1$ and $B_2$ are suitably uniformly distributed, and make use of Proposition \ref{Propn00}, throughout the proof of Proposition \ref{part11}.

\subsection{A Relative Generalized von-Neumann Inequality for Simplices and Products of Simplices}

 \begin{defn}[Counting function for $\Delta_{k_1}\times \Delta_{k_2}$]
Let $0<\lm\ll1$.

 For functions $f_{ij}:[0,1]^{d_1}\times [0,1]^{d_2}\to\R$ with $(i,j)\in\{0,1,\dots,k_1\}\times\{0,1,\dots,k_2\}$  we define
\be
T_{\Delta_{k_1},\Delta_{k_2}}(f_{00},\dots,f_{k_1k_2})(\lm)=\iiiint \prod_{i=0}^{k_1}\prod_{j=0}^{k_2}f_{ij}(x-\lm\cdot U_1(v^1_i),y-\lm\cdot U_2(v^2_j))
\,d\mu_1(U_1)\,d\mu_2(U_2)\,dx\,dy\ee
\end{defn}

Note that if we let
\be
\widetilde{\nu}(x,y)=\nu_1(x)^{1/(k_2+1)}\nu_2(y)^{1/(k_1+1)}
\ee
where
$
\nu_1=\B_1^{-1}1_{B_1}$ and $\nu_2=\B_2^{-1}1_{B_2}
$
then
 \[T_{\Delta_{k_1},\Delta_{k_2}}(\widetilde{\nu},\dots,\widetilde{\nu})(\lm)=T_{\Delta_{k_1}}(\nu_1,\dots,\nu_1)(\lm)\cdot T_{\Delta_{k_2}}(\nu_2,\dots,\nu_2)(\lm)\] and  in light of Proposition \ref{Propn00} we can conclude that
\be\label{observation}
T_{\Delta_{k_1},\Delta_{k_2}}(\widetilde{\nu},\dots,\widetilde{\nu})(\lm)=1+O_{k_1,k_2}(\B_1^{-k_1-1}\B_2^{-k_2-1}c_{\Delta_{k_1}}^{-1/6}c_{\Delta_{k_2}}^{-1/6}\VE^{2/3})
\ee
for any $0<\lm\leq\VE\ll1$, provided $B_1$ and $B_2$ are $(\VE,\VE^4\lm)$-uniformly distributed subsets of $[0,1]^{d_1}$ and $[0,1]^{d_2}$.

\newpage

In this setting we have the following ``generalized von-Neumann inequality", for which it is essential that our count of product simplices is taken relative to suitably uniformly distributed sets $B_1$ and $B_2$.

\begin{lem}[Generalized von-Neumann for $\Delta_{k_1}\times \Delta_{k_2}$ relative to $B_1\times B_2$]\label{GvN11}

Let 
\[
\widetilde{\nu}=\nu_1^{1/(k_2+1)}\otimes\nu_2^{1/(k_1+1)}\quad\text{and}\quad \nu=\nu_1^{1/2}\otimes\nu_2^{1/2}
\]
where
$
\nu_1=\B_1^{-1}1_{B_1}$ and $\nu_2=\B_2^{-1}1_{B_2}
$
For any $0<\lm\leq\VE\ll\min\{c_{\Delta_{k_1}},c_{\Delta_{k_2}}\}$ and functions
\[f_{ij}:[0,1]^{d_1}\times [0,1]^{d_2}\to[-1,1]\]
with $(i,j)\in\{0,1,\dots,k_1\}\times\{0,1,\dots,k_2\}$ we have
\bee
|T_{\Delta_{k_1},\Delta_{k_2}}(f_{00}\widetilde{\nu},\dots,f_{k_1k_2}\widetilde{\nu})(\lm)|\leq \min\limits_{\substack{i=0,1,\dots,k_1\\ j=0,1,\dots,k_2}} \|f_{ij}\nu\|_{\Box(\VE^4\lm)} + O_{k_1,k_2}(\B_1^{-k_1-1}\B_2^{-k_2-1}c_{\Delta_{k_1}}^{-1/8}c_{\Delta_{k_2}}^{-1/8}\VE^{1/16})
\eee
provided $B_1$ and $B_2$ are $(\VE,\VE^4\lm)$-uniformly distributed subsets of $[0,1]^{d_1}$ and $[0,1]^{d_2}$ respectively.
\end{lem}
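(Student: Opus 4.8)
\textbf{Proof proposal for Lemma \ref{GvN11}.}

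The plan is to mirror the structure of the proof of Lemma \ref{GvN1}, but with the relative von-Neumann inequality for distances replaced by an appropriate relative version of the argument for simplices. By symmetry it suffices to bound $|T_{\Delta_{k_1},\Delta_{k_2}}(f_{00}\widetilde\nu,\dots,f_{k_1k_2}\widetilde\nu)(\lm)|$ in terms of $\|f_{k_1k_2}\nu\|_{\Box(\VE^4\lm)}$. First I would freeze the rotation $U_1$ and the $\R^{d_1}$-variables; writing out the simplex count as an iterated integral over the nested spheres $S^{d_1-j}_{x_1,\dots,x_{j-1}}$ exactly as in the proof of Lemma \ref{GvN00}, the inner integral (in the $\R^{d_2}$ variables) is a single relative simplex count $T_{\Delta_{k_2}}(g_0\nu_2^{\,\cdot},\dots,g_{k_2}\nu_2^{\,\cdot})(\lm)$ with $g_j(y)=\prod_{i=0}^{k_1} f_{ij}(x-\lm U_1(v^1_i),y)$. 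So the crux is a \emph{relative generalized von-Neumann inequality for simplices} relative to $B_2$: one must show
\[
|T_{\Delta_{k_2}}(g_0\nu_2,\dots,g_{k_2}\nu_2)(\lm)|\leq \Bigl(\iint g_{k_2}\nu_2(y)\,g_{k_2}\nu_2(y-y_1)\,\psi_{\VE^4\lm}(y_1)\,dy_1\,dy\Bigr)^{1/2}+O(\B_2^{-k_2-1}c_{\Delta_{k_2}}^{-1/8}\VE^{1/16}).
\]
Here the factor-of-$1/2$ loss in the exponent of the $U^1$-norm (and hence the appearance of the $\Box$-norm rather than something sharper, and the $\VE^{1/16}$ error) is inherited from the second approach to Lemma \ref{GvN00}, i.e. Lemma \ref{GvN000}: squaring out after Cauchy--Schwarz, parametrizing the last sphere by a polar angle $\theta$, and iterating one step up to reduce the last spherical average to a relative \emph{distance} count $T(h\nu_2,h\nu_2)(c(\theta)\lm)$ with $c(\theta)\asymp\sin(\theta/2)\,c_{\Delta_{k_2}}$, to which Lemma \ref{GvN0withB} applies. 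The extra room $d_i\geq k_i+3$ (rather than $k_i+1$) is exactly what is needed so that after peeling off two variables in this squaring argument the remaining sphere still has dimension $\geq 1$ and the angular integral $\int_0^\pi(\sin\theta)^{d_i-k_i-1}(\sin(\theta/2))^{-1/6}\,d\theta$ converges --- this is where the loss in dimension comes from, and I would run the direct $d\geq k+2$ variant of Section \ref{direct} inside the relative setting.

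Having established the relative simplex inequality in the $\R^{d_2}$ variables, I would integrate back over $U_1$ and the $\R^{d_1}$ variables against the measure $\nu_1(x)\prod_{i\geq 1}(\text{nested sphere measures})$, apply Cauchy--Schwarz in those variables to pull the square root outside, and thereby bound $|T_{\Delta_{k_1},\Delta_{k_2}}(\dots)(\lm)|^2$ (up to the accumulated error) by a $U^1(\VE^4\lm)$-type average in the $y$-direction of the function $G(x,y)=\prod_i f_{i k_2}(x,y)$ integrated against the $\Delta_{k_1}$ simplex weight in $x$. Now I would repeat the whole procedure in the $\R^{d_1}$ direction: the resulting quantity is a relative simplex count in $x$ (relative to $B_1$, with the $y,y-y_1$ and $\psi$ factors as harmless bounded weights absorbed into the $L^\infty$ bounds), so the same relative simplex inequality applied to $\Delta_{k_1}$ and $B_1$ yields, after one more Cauchy--Schwarz, the $\Box(\VE^4\lm)$-average
\[
\iiiint f_{k_1k_2}\nu(x,y)f_{k_1k_2}\nu(x-x_1,y)f_{k_1k_2}\nu(x,y-y_1)f_{k_1k_2}\nu(x-x_1,y-y_1)\,\psi_{1,\VE^4\lm}(x_1)\psi_{2,\VE^4\lm}(y_1)\,dx_1\,dx\,dy_1\,dy,
\]
which is $\|f_{k_1k_2}\nu\|_{\Box(\VE^4\lm)}^4+O(\VE^4\lm)$ by observation (\ref{almost2}); taking fourth roots gives the claimed bound. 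Throughout one uses $\|f_{ij}\nu\|_2^2\leq \B_1^{-1}\B_2^{-1}$ (and the analogous one-variable bounds $\|\cdot\|_2^2\leq\B_i^{-1}$) to control the Fourier-side $L^2$ factors, which is the source of the $\B_1^{-k_1-1}\B_2^{-k_2-1}$ prefactor in the error after the exponents are tracked through the two Cauchy--Schwarz steps in each direction.

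The main obstacle is proving the relative simplex von-Neumann inequality with an explicitly polynomial dependence on $\B_2$ (and the right power of $c_{\Delta_{k_2}}$): in the non-relative case one simply uses $\|f\|_2\leq 1$, but now each $g_j\nu_2$ has $L^2$-norm only $O(\B_2^{-(k_2+1)/2})$ after the product over $i$, so one must be careful that the Cauchy--Schwarz/Parseval steps distribute this blow-up correctly, and that the spherical-decay estimate (\ref{decay})--(\ref{Ibound}) is applied only after the mass has been normalized --- equivalently, that the ``bad'' part where $\widehat\psi$ is not close to $1$ contributes the stated $\B_2^{-k_2-1}c_{\Delta_{k_2}}^{-1/8}\VE^{1/16}$ and not worse. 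The bookkeeping of how the $U^1$-exponent $1$ (in the relative distance Lemma \ref{GvN0withB}) degrades to $1/2$ after the angular reduction, and then to a product of four $\Box$-norm factors raised to the power $1/4$ after the two-directional iteration, is the delicate part; I expect the cleanest route is to first state and prove a standalone ``relative Lemma \ref{GvN000}'' for a single simplex relative to $B$, with error $O(\B^{-k-1}c_{\Delta_k}^{-1/12}\VE^{1/3})$, and then feed it twice into the product structure, absorbing the exponent $1/3$ into the final $\VE^{1/16}$ after the fourth root.
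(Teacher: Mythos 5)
Your outer assembly (two Cauchy--Schwarz steps, one in each factor, finishing with (\ref{almost2}) to produce the $\Box(\VE^4\lm)$-norm) matches the paper's proof of Lemma \ref{GvN11}, and you correctly identify that everything hinges on a relative von-Neumann inequality for a \emph{single} simplex relative to $B$. The gap is in how you propose to prove that relative simplex inequality. You want to run the proof of Lemma \ref{GvN000} with $\nu$-weights inserted and finish with Lemma \ref{GvN0withB}, treating the rest as bookkeeping of $L^2$-norms. But the step that makes the non-relative argument work --- discarding the functions at the passive vertices after Cauchy--Schwarz because they are bounded by $1$, and later deleting the $1_{[0,1]^d}$ entries via (\ref{i}) --- is exactly what fails in the relative setting: the passive vertices now carry copies of $\nu=\B^{-1}1_B$, which are not bounded by $1$, and bounding them in $L^\infty$ multiplies the \emph{main} term by powers of $\B^{-1}$, which is fatal for the density-increment application. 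The paper's Lemma \ref{relative} handles this by (a) normalizing the Cauchy--Schwarz step with $T_{\Delta_{k-1}}(\nu,\dots,\nu)(\lm)=1+O(\cdots)$, which already requires the $(\VE,\VE^4\lm)$-uniform distribution of $B$ via Proposition \ref{Propn00}, and (b) splitting off an error term carrying the factor $\nu(x)\nu(x-\lm x_1)\cdots\nu(x-\lm x_{k-1})-1$, which is shown to be small only after two further squaring/angular-decomposition steps that create auxiliary simplices of dimension up to $k+2$ and appeal again to Proposition \ref{Propn00}, together with control of the nearly degenerate angles where $c_{\Delta'_{k+2-j}(\theta_1,\theta_2)}\leq\VE$. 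Your sketch never invokes the uniform distribution of $B_2$ in this step (Lemma \ref{GvN0withB} holds for arbitrary $B$), so as written the claimed intermediate inequality is unproved, and it cannot hold with main-term coefficient $1$ for arbitrary $B$.

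Two related points. First, your explanation of the dimension hypothesis is not the right one: the integral $\int_0^\pi(\sin\theta)^{d-k-1}(\sin(\theta/2))^{-1/6}\,d\theta$ already converges for $d\geq k+1$. The true source of $d_i\geq k_i+3$ is the error analysis just described: removing the passive $\nu$-weights forces one to compare weighted and unweighted counts of $(k+2)$-dimensional auxiliary simplices, and Proposition \ref{Propn00} (together with the sphere decompositions) requires the ambient dimension to exceed the simplex dimension, i.e. $d\geq k+3$. Second, in your second pass you cannot ``absorb the $\nu_2(y)\nu_2(y-y_1)$ and $\psi$ factors into $L^\infty$ bounds'': those weights must be carried along, since they are part of $\|f_{k_1k_2}\nu\|_{\Box(\VE^4\lm)}$, and the normalization $\iint\nu_2(y)\nu_2(y-y_1)\psi_{2,\VE^4\lm}(y_1)\,dy\,dy_1\approx 1$ used to run Cauchy--Schwarz there again relies on the uniform distribution of $B_2$. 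With a correctly stated and proved relative single-simplex lemma (the paper's Lemma \ref{relative}) your outline would go through essentially as in the paper.
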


It is easy to see that Lemma \ref{GvN11}, combined with Proposition \ref{Propn00}, gives the following
\begin{cor}\label{CorGvN11}
Let $0<\A,\B_1,\B_2\leq 1$ and \[0<\lm\leq\VE\ll_{k_1,k_2}(c_{\Delta_{k_1}}c_{\Delta_{k_2}})^{2}(\B_1^{k_1+1}\B_2^{k_2+1}\A^{(k_1+1)(k_2+1)})^{16}.\]

If $A\subseteq B_1\times B_2\subseteq[0,1]^{d_1}\times[0,1]^{d_2}$ with $|A|=\A\B_1\B_2$ and 
$
\|f_A\nu\|_{\Box(\VE^4\lm)}\ll\A^{(k_1+1)(k_2+1)},
$
then
\bee
T_{\Delta_{k_1},\Delta_{k_2}}(1_A\widetilde{\nu},\dots,1_A\widetilde{\nu})(\lm)\geq\frac{1}{2}\A^{(k_1+1)(k_2+1)}
\eee
provided $B_1$ and $B_2$ are $(\VE,\VE^4\lm)$-uniformly distributed subsets of $[0,1]^{d_1}$ and $[0,1]^{d_2}$ respectively.
\end{cor}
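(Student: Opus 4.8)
The plan is to deduce Corollary~\ref{CorGvN11} from Lemma~\ref{GvN11} in exactly the same way that Corollary~\ref{CorGvN1} was deduced from Lemma~\ref{GvN1}. First I would set $f_A=1_A-\A1_{B_1\times B_2}$ and expand the multilinear count $T_{\Delta_{k_1},\Delta_{k_2}}(1_A\widetilde\nu,\dots,1_A\widetilde\nu)(\lm)$ by writing $1_A\widetilde\nu = f_A\widetilde\nu + \A\,\widetilde\nu$ in each of the $(k_1+1)(k_2+1)$ slots and using multilinearity. This produces $2^{(k_1+1)(k_2+1)}$ terms: the pure term $\A^{(k_1+1)(k_2+1)}T_{\Delta_{k_1},\Delta_{k_2}}(\widetilde\nu,\dots,\widetilde\nu)(\lm)$, and $2^{(k_1+1)(k_2+1)}-1$ mixed terms, each of which has at least one slot occupied by $f_A\widetilde\nu$. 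Applying Lemma~\ref{GvN11} to each mixed term (after bounding the remaining $f_{ij}$'s, all of which are $1_A$ or $\A1_{B_1\times B_2}$ and hence take values in $[-1,1]$) bounds its absolute value by $\|f_A\nu\|_{\Box(\VE^4\lm)} + O_{k_1,k_2}(\B_1^{-k_1-1}\B_2^{-k_2-1}c_{\Delta_{k_1}}^{-1/8}c_{\Delta_{k_2}}^{-1/8}\VE^{1/16})$.

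Summing the estimates for the mixed terms and using the triangle inequality gives
\[
\Bigl|T_{\Delta_{k_1},\Delta_{k_2}}(1_A\widetilde\nu,\dots,1_A\widetilde\nu)(\lm) - \A^{(k_1+1)(k_2+1)}T_{\Delta_{k_1},\Delta_{k_2}}(\widetilde\nu,\dots,\widetilde\nu)(\lm)\Bigr|
\le (2^{(k_1+1)(k_2+1)}-1)\,\|f_A\nu\|_{\Box(\VE^4\lm)} + O_{k_1,k_2}\bigl(\B_1^{-k_1-1}\B_2^{-k_2-1}c_{\Delta_{k_1}}^{-1/8}c_{\Delta_{k_2}}^{-1/8}\VE^{1/16}\bigr).
\]
Next I would invoke the observation~\eqref{observation} in this section, which, since $B_1$ and $B_2$ are assumed $(\VE,\VE^4\lm)$-uniformly distributed, yields $T_{\Delta_{k_1},\Delta_{k_2}}(\widetilde\nu,\dots,\widetilde\nu)(\lm)=1+O_{k_1,k_2}(\B_1^{-k_1-1}\B_2^{-k_2-1}c_{\Delta_{k_1}}^{-1/6}c_{\Delta_{k_2}}^{-1/6}\VE^{2/3})$ via Proposition~\ref{Propn00}. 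Thus the pure term is $\A^{(k_1+1)(k_2+1)}+O_{k_1,k_2}(\B_1^{-k_1-1}\B_2^{-k_2-1}c_{\Delta_{k_1}}^{-1/6}c_{\Delta_{k_2}}^{-1/6}\VE^{2/3})$.

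Finally I would combine these: under the hypothesis $\|f_A\nu\|_{\Box(\VE^4\lm)}\ll\A^{(k_1+1)(k_2+1)}$ with a sufficiently small implied constant (depending on $k_1,k_2$), the first error term on the right is at most, say, $\tfrac14\A^{(k_1+1)(k_2+1)}$; and under the quantitative smallness hypothesis $0<\lm\le\VE\ll_{k_1,k_2}(c_{\Delta_{k_1}}c_{\Delta_{k_2}})^{2}(\B_1^{k_1+1}\B_2^{k_2+1}\A^{(k_1+1)(k_2+1)})^{16}$, the $O(\VE^{1/16})$-type errors (which dominate the $O(\VE^{2/3})$ error once $\VE\ll1$) are also at most $\tfrac14\A^{(k_1+1)(k_2+1)}$; note here that $\VE^{1/16}\le(c_{\Delta_{k_1}}c_{\Delta_{k_2}})^{1/8}\B_1^{k_1+1}\B_2^{k_2+1}\A^{(k_1+1)(k_2+1)}$ is exactly what is needed to absorb the $\B_i^{-k_i-1}c_{\Delta_{k_i}}^{-1/8}$ factors. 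Hence $T_{\Delta_{k_1},\Delta_{k_2}}(1_A\widetilde\nu,\dots,1_A\widetilde\nu)(\lm)\ge\A^{(k_1+1)(k_2+1)}-\tfrac12\A^{(k_1+1)(k_2+1)}=\tfrac12\A^{(k_1+1)(k_2+1)}$, as claimed. There is essentially no obstacle here: the entire argument is a routine bookkeeping exercise in multilinear expansion and matching exponents, and the only point requiring any care is checking that the exponent $16$ in the hypothesis on $\VE$ is the correct one to kill all the negative powers of $\B_i$ and $c_{\Delta_{k_i}}$ appearing in the error term of Lemma~\ref{GvN11}. All the genuine analytic content has already been placed in Lemma~\ref{GvN11} and Proposition~\ref{Propn00}.
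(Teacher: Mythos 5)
Your proposal is correct and follows essentially the same route as the paper: a multilinear expansion of $1_A\widetilde\nu=f_A\widetilde\nu+\A\widetilde\nu$, Lemma \ref{GvN11} applied to the $2^{(k_1+1)(k_2+1)}-1$ terms containing $f_A$, and the evaluation of the main term via \eqref{observation}/Proposition \ref{Propn00}, with the hypothesis on $\VE$ absorbing the $\B_i^{-k_i-1}c_{\Delta_{k_i}}^{-1/8}\VE^{1/16}$ errors. The paper states this more tersely, but the argument is identical.
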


\begin{proof}[Proof of Corollary \ref{CorGvN11}]
It follows immediately from Lemma \ref{GvN11} that
\begin{align*}
|T_{\Delta_{k_1},\Delta_{k_2}}(1_A\widetilde{\nu},\dots,1_A\widetilde{\nu})(\lm)-&T_{\Delta_{k_1},\Delta_{k_2}}(\A\widetilde{\nu},\dots,\A\widetilde{\nu})(\lm)|\\
&\leq (2^{(k_1+1)(k_2+1)}-1)\|f_A\nu\|_{\Box(\VE^4\lm)}+ O_{k_1,k_2}(\B_1^{-k_1-1}\B_2^{-k_2-1}c_{\Delta_{k_1}}^{-1/8}c_{\Delta_{k_2}}^{-1/8}\VE^{1/16})
\end{align*}
for any $0<\VE,\lm\ll\min\{c_{\Delta_{k_1}},c_{\Delta_{k_2}}\}$, where $
f_A=1_A-\A1_{B_1\times B_2}
$
while, as noted in (\ref{observation}), Proposition \ref{Propn00} implies that
\[
T_{\Delta_{k_1},\Delta_{k_2}}(\A\widetilde{\nu},\dots,\A\widetilde{\nu})(\lm)=
\A^{(k_1+1)(k_2+1)}(1+O_{k_1,k_2}(\B_1^{-k_1-1}\B_2^{-k_2-1}c_{\Delta_{k_1}}^{-1/6}c_{\Delta_{k_2}}^{-1/6}\VE^{2/3}))
\]
for any $0<\lm\leq\VE\ll1$, as required.
\end{proof}

\subsection{A Relative Version of Lemma \ref{GvN00}}\label{6.2}

Key to the proof of Lemma \ref{GvN11} is the following

\begin{lem}[Lemma \ref{GvN00} relative to uniformly distributed sets]\label{relative}
Let $\Delta_k=\{0,v_1,v_2,\dots,v_{k}\}$ be any non-degenerate $k$-dimensional simplex with \[c_{\Delta_k}=\min_{1\leq j\leq k}\text{\emph{dist}}(v_j,\text{\emph{span}}\left\{\{v_1,\dots,v_{k}\}\setminus v_j\right\})\leq 1\] and
$B\subseteq[0,1]^{d}$ with $d\geq k+3$ be an arbitrary set with $\B=|B|>0$.
If we set $\nu=\B^{-1}1_B$, then for any $0<\lm\leq\VE\ll c_{\Delta_k}$ and functions
$f_0,f_1,\dots,f_{k}:[0,1]^d\to[-1,1]$ we have
\be\label{pp}
\left|T_{\Delta_k}(f_0\nu,\dots,f_{k}\nu)(\lm)\right|^2\leq\iint f_j\nu(x)f_j\nu (x-x_1)\psi_{\VE^4\lm}(x_1)\,dx\,dx_1+O_{k}(\B^{-3k-3}c_{\Delta_k}^{-1/2}\VE^{1/4})
\ee
for any $0\leq j\leq k$, provided $B$ is a $(\VE,\VE^4\lm)$-uniformly distributed subset of $[0,1]^{d}$.
\end{lem}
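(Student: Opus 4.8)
By symmetry it suffices to prove \eqref{pp} for $j=k$; the general case follows by relabeling the simplex via identities analogous to \eqref{ii}. The strategy mirrors the proof of Lemma \ref{GvN00} combined with the ``conical decomposition over an angular variable'' technique used in the direct proof of Lemma \ref{GvN000} (Section \ref{direct}), but now carried out \emph{relative} to the weight $\nu$. First I would expand $T_{\Delta_k}(f_0\nu,\dots,f_k\nu)(\lm)$ using the iterated spherical parametrization from \cite{B}, pulling out the innermost integral in $x_k$ over the sphere $S^{d-k}_{x_1,\dots,x_{k-1}}$. Applying Cauchy--Schwarz in the remaining variables (absorbing the bounded factors $f_i\nu$, $i<k$, together with the $\nu$-weights — here I would use $\|f_i\nu\|_\infty\leq\B^{-1}$ and the fact that the total mass of the product of spherical measures is $1$) majorizes $|T_{\Delta_k}(f_0\nu,\dots,f_k\nu)(\lm)|^2$ by $\B^{-2k}$ times an integral of $\bigl|\int f_k\nu(x-\lm x_k)\,d\sigma^{(d-k)}_{x_1,\dots,x_{k-1}}(x_k)\bigr|^2$ against the outer spherical measures and $dx$.

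\textbf{Main step.} Now, rather than applying Plancherel directly (which would reproduce Lemma \ref{GvN00} but cannot be iterated relative to $\nu$ without control on $\widehat{\nu}$), I would ``square out'' this squared inner integral into a double integral over $x_k,x_{k+1}\in S^{d-k}_{x_1,\dots,x_{k-1}}$, exactly as in \eqref{82}, and then — since $d\geq k+3$ means $d-k\geq 3$, so the sphere $S^{d-k}$ has dimension $\geq 2$ — disintegrate the $x_{k+1}$-measure over the angular variable $\theta\in[0,\pi]$ measured from $x_k$, writing $d\sigma^{(d-k)}_{x_1,\dots,x_{k-1}}(x_{k+1})=\int_0^\pi(\sin\theta)^{d-k-1}\,d\sigma^{(d-k-1)}_{x_1,\dots,x_{k-1},x_k,\theta}(x_{k+1})\,d\theta$ as in Section \ref{direct}. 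This realizes the left side, up to $\B^{-2k}$ and an $O(\lm)$ error, as $\int_0^\pi(\sin\theta)^{d-k-1}\,T_{\Delta_{k+1}(\theta)}(1_{[0,1]^d},\dots,1_{[0,1]^d},f_k\nu,f_k\nu)(\lm)\,d\theta$ where $\Delta_{k+1}(\theta)$ is the augmented simplex with $|v_{k+1}-v_k|=2\sin(\theta/2)\,\mathrm{dist}(v_k,\mathrm{span}\{v_1,\dots,v_{k-1}\})$. Using identities \eqref{i} and \eqref{ii} to strip the $1_{[0,1]^d}$ factors and reorder, this collapses to $\int_0^\pi(\sin\theta)^{d-k-1}\,T(f_k\nu,f_k\nu)(c(\theta)\lm)\,d\theta + O(\lm)$ with $c(\theta)=2\sin(\theta/2)\,\mathrm{dist}(v_k,\mathrm{span}\{v_1,\dots,v_{k-1}\})\geq 2\sin(\theta/2)\,c_{\Delta_k}$. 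Applying the relative distance inequality Lemma \ref{GvN0withB} to each $T(f_k\nu,f_k\nu)(c(\theta)\lm)$ gives
\[
T(f_k\nu,f_k\nu)(c(\theta)\lm)\leq\iint f_k\nu(x)f_k\nu(x-x_1)\psi_{\VE^4\lm}(x_1)\,dx_1\,dx+O\bigl(\B^{-1}(\sin(\theta/2))^{-1/6}c_{\Delta_k}^{-1/6}\VE^{2/3}\bigr),
\]
and since $\int_0^\pi(\sin\theta)^{d-k-1}(\sin(\theta/2))^{-1/6}\,d\theta<\infty$ (using $d-k-1\geq 2$, so the integrand is bounded), integrating in $\theta$ and multiplying through by $\B^{-2k}$ yields \eqref{pp} with the claimed shape of error term, after bookkeeping the powers of $\B$ and $c_{\Delta_k}$ and using $\VE\ll c_{\Delta_k}$ to absorb the $O(\lm)$ terms.

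\textbf{Anticipated obstacle.} The delicate point is the power-of-$\B$ accounting and the precise exponent on $\VE$: each Cauchy--Schwarz step relative to $\nu$ costs a factor of $\B^{-1}$ from $\|f_i\nu\|_\infty$ or $\|f_k\nu\|_2^2\leq\B^{-1}$, and one must check that after squaring the main term (which also squares these losses) and then taking a square root at the end, the total loss is $O(\B^{-3k-3})$ rather than something worse; the exponent $3k+3$ suggests roughly three $\B^{-1}$ losses per index on average, consistent with one from the initial Cauchy--Schwarz, one from the squaring/disintegration, and one from Lemma \ref{GvN0withB}. A second subtlety is that the disintegration over $\theta$ requires $d-k\geq 3$ (so that $(\sin\theta)^{d-k-1}(\sin(\theta/2))^{-1/6}$ is integrable and in fact bounded); this is exactly the source of the hypothesis $d\geq k+3$ and hence of the dimensional loss flagged in the section's introduction. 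One must also verify that $\Delta_{k+1}(\theta)$ remains non-degenerate for $\theta\in(0,\pi)$ and that its $c_{\Delta_{k+1}(\theta)}$ is comparable to $\sin(\theta/2)\,c_{\Delta_k}$, so that the error from Lemma \ref{GvN0withB} has the stated $\theta$-integrable form — but this is the same computation already carried out in Section \ref{direct}, so no new idea is needed there.
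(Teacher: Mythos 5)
Your opening Cauchy--Schwarz step is where the argument breaks. Bounding the factors $f_i\nu$, $i<k$, by $\|f_i\nu\|_\infty\leq\B^{-1}$ discards the weights at the cost of a factor $\B^{-2k}$ sitting \emph{in front of the main term}: what your route yields is
\begin{equation*}
\left|T_{\Delta_k}(f_0\nu,\dots,f_{k}\nu)(\lm)\right|^2\leq C\,\B^{-2k}\Bigl(\iint f_k\nu(x)f_k\nu(x-x_1)\psi_{\VE^4\lm}(x_1)\,dx_1\,dx+O(\B^{-1}c_{\Delta_k}^{-1/6}\VE^{2/3})\Bigr),
\end{equation*}
which is strictly weaker than \eqref{pp}, where the main term has coefficient $1$. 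Since the lemma imposes no smallness of $\VE$ relative to $\B$, the excess $(\B^{-2k}-1)$ times the main term cannot be absorbed into the error $O_k(\B^{-3k-3}c_{\Delta_k}^{-1/2}\VE^{1/4})$ (take $f_k\equiv1$: the two sides then differ by a factor of order $\B^{-2k}$). The coefficient $1$ is not cosmetic: in Lemma \ref{GvN11} and Corollary \ref{CorGvN11} the resulting $\Box$-norm must be compared to $\A^{(k_1+1)(k_2+1)}$ with constants independent of $\B_1,\B_2$, which a $\B^{-2k}$-inflated main term would destroy. A telltale sign of the gap is that your argument never uses the hypothesis that $B$ is $(\VE,\VE^4\lm)$-uniformly distributed (Lemma \ref{GvN0withB} holds for arbitrary $B$); but without that hypothesis the stated inequality is false --- for instance, if $B$ is a thin spherical shell of radius comparable to $\lm$, then already $T_{\Delta_k}(\nu,\dots,\nu)(\lm)$ is a large negative power of $\lm$ --- so no proof that ignores it can succeed.

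The paper's proof is organized precisely to avoid this loss. It applies Cauchy--Schwarz with respect to the measure $\nu(x)\nu(x-\lm x_1)\cdots\nu(x-\lm x_{k-1})\,d\sigma\cdots dx$, whose total mass is $1+O_k(\B^{-k}c_{\Delta_k}^{-1/6}\VE^{2/3})$ by Proposition \ref{Propn00} --- this is the first place uniform distribution enters. The price is that the weight remains inside the squared inner integral, so after squaring out one faces $T_{\Delta_{k+1}(\theta)}(\nu,\dots,\nu,f_k\nu,f_k\nu)$ rather than $T_{\Delta_{k+1}(\theta)}(1,\dots,1,f_k\nu,f_k\nu)$, and the identity \eqref{i} does not strip the outer factors as in your plan. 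The heart of the paper's argument is the control of the difference $E(\lm)$ between the weighted and unweighted outer measures: one telescopes $\nu(x)\cdots\nu(x-\lm x_{k-1})-1$, squares out to add a vertex $x_{k+1}$, and then performs a \emph{second} Cauchy--Schwarz to eliminate the two uncontrolled $f_k\nu$ factors; this adds yet another vertex, producing $(k+2)$-dimensional simplices $\Delta'_{k+2-j}(\theta_1,\theta_2)$ whose weighted and unweighted counts agree up to acceptable errors, again by Proposition \ref{Propn00} and the uniform distribution of $B$. That second squaring --- not the boundedness of $(\sin\theta)^{d-k-1}(\sin(\theta/2))^{-1/6}$, since the angular disintegration already works for $d\geq k+2$ as in Section \ref{direct} --- is the true source of the hypothesis $d\geq k+3$, which your proposal misattributes. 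To repair your write-up you would need to keep the $\nu$-weights through the Cauchy--Schwarz and supply this error analysis; as it stands, the proposal proves only a weaker inequality and misses the main idea of the lemma.
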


\begin{proof}
As in the proof of Lemma \ref{GvN00} it suffices, by symmetry, to establish (\ref{pp}) for $j=k$.
Note also, as in (\ref{observation}) above, that Proposition \ref{Propn00} implies
\be\label{observation2}
T_{\Delta_k}(\nu,\dots,\nu)(\lm)=1+O_k(\B^{-k-1}c_{\Delta_k}^{-1/6}\VE^{2/3}),
\ee
provided $0<\lm\leq\VE\ll1$ and $B$ is an $(\VE,\VE^4\lm)$-uniformly distributed subset of $[0,1]^{d}$ with $d\geq k+1$. It is equally easy to see, using Lemma \ref{GvN00}, that if $1\leq j\leq k$ and any $j$ of the weights $\nu$ are replaced with $1_{[0,1]^d}$ then this modified count will still be asymptotically equal to $1$ and will in fact equal $1+O_k(\B^{-k-1+j}c_{\Delta_k}^{-1/6}\VE^{2/3})$.

Since
\begin{align*}
\left|T_{\Delta_k}(f_0\nu,\dots,f_{k}\nu)(\lm)\right|\leq\iint\cdots\int \nu(x)\nu(x-\lm x_{1})\cdots \nu(x-\lm x_{k-1})\Bigl|  \int f_k\nu & (x-\lm x_k) \,d\sigma^{(d-k)}_{x_1,\dots,x_{k-1}}(x_k) \Bigr|\\
&\,d\sigma^{(d-k+1)}_{x_1,\dots,x_{k-2}}(x_{k-1})\cdots d\sigma(x_{1})\,dx
\end{align*}
it follows from  an application of Cauchy-Schwarz, facilitated by (\ref{observation2}) for the simplex $\Delta_{k-1}$, that
\bee
|T_{\Delta_k}(f_0\nu,\dots,f_{k}\nu)(\lm)|^2\leq\bigl(1+O_{k}(\B^{-k}c_{\Delta_k}^{-1/6}\VE^{2/3})\bigr)^2
\bigl(\,M(\lm)+E(\lm)\,\bigr)
\eee
where
\bee
M(\lm)=\iint\cdots\int  \Bigl|  \int  f_k \nu (x-\lm x_k)\,d\sigma^{(d-k)}_{x_1,\dots,x_{k-1}}(x_k) \Bigr|^2\,d\sigma^{(d-k+1)}_{x_1,\dots,x_{k-2}}(x_{k-1})\cdots d\sigma(x_{1})\,dx
\eee
and
\begin{align*}
E(\lm)&=\iint\cdots\int \bigl[\nu(x)\nu(x-\lm x_{1})\cdots \nu(x-\lm x_{k-1})-1(x)\bigr] \Bigl|  \int  f_k \nu (x-\lm x_k)\,d\sigma^{(d-k)}_{x_1,\dots,x_{k-1}}(x_k) \Bigr|^2\\
&\quad\quad\quad\quad\quad\quad\quad\quad\quad\quad\quad\quad\quad\quad\quad\quad\quad\quad\quad\quad\quad\quad\quad\quad\quad\quad\quad\quad\,d\sigma^{(d-k+1)}_{x_1,\dots,x_{k-2}}(x_{k-1})\cdots d\sigma(x_{1})\,dx
\end{align*}
where $1=1_{[0,1]^d}$.

It follows from the proof of Lemma \ref{GvN00}, specifically the argument from  (\ref{CS})  to  (\ref{72})) that
\bee
M(\lm)\leq \iint f_k\nu(x_1)f_k\nu(x_2)\psi_{\VE^4\lm}(x_2-x_1)\,dx_1\,dx_2.
\eee

We now complete the proof by establishing that  $E(\lm)=O_k(\B^{-k-3}c_{\Delta_k}^{-1/6}\VE^{1/4})$. Our strategy will be to expand the square in the error term $E(\lm)$ which will add a new vertex $x_{k+1}$ to the simplex. ``Fixing" the distance $|x_{k+1}-x_k|$ leads to an expression which may be viewed as the difference between a weighted and an unweighted average over all isometric copies of a fixed $(k+1)$-dimensional simplex. The reason that this difference is small is that the measure $\nu$ behaves suitably random with respect to averages of this type, expressed in (\ref{observation2}). To remove the uncontrolled terms $f_k$ one needs another application of Cauchy-Schwarz which leads to simplices of dimension $k+2$ and the requirement $d\geq k+3$ for the underlying dimension of the space.

Writing
\begin{align*}
\nu(x)\nu(x-\lm x_{1})\cdots \nu(x-\lm x_{k-1})-1(x)&=\sum_{j=0}^{k-1}\bigl[\nu(x-\lm x_{j})-1(x)\bigr]\nu(x-\lm x_{j+1})\cdots \nu(x-\lm x_{k-1})
\end{align*}
with the understanding that $x_0=0$, it follows that
\bee
E(\lm)=\sum_{j=0}^{k-1}E_j(\lm)
\eee
with
\begin{align*}
E_j(\lm)&=\iint\cdots\int \bigl[\nu(x-\lm x_{j})-1(x)\bigr]\nu(x-\lm x_{j+1})\cdots \nu(x-\lm x_{k-1}) \Bigl|  \int  f_k \nu (x-\lm x_k)\,d\sigma^{(d-k)}_{x_1,\dots,x_{k-1}}(x_k) \Bigr|^2\\
&\quad\quad\quad\quad\quad\quad\quad\quad\quad\quad\quad\quad\quad\quad\quad\quad\quad\quad\quad\quad\quad\quad\quad\quad\quad\quad\quad\quad\quad\,d\sigma^{(d-k+1)}_{x_1,\dots,x_{k-2}}(x_{k-1})\cdots d\sigma(x_{1})\,dx.
\end{align*}
Squaring out we see that
\begin{align*}
E_j(\lm)=&\iint\cdots\iiint \bigl[\nu(x-\lm x_{j})-1(x)\bigr]\nu(x-\lm x_{j+1})\cdots \nu(x-\lm x_{k-1})   f_k \nu (x-\lm x_k)f_k \nu (x-\lm x_{k+1})\\
&\quad\quad\quad\quad\quad\quad\quad\quad\quad\quad\quad\quad\quad
\,d\sigma^{(d-k)}_{x_1,\dots,x_{k-1}}(x_{k+1})\,d\sigma^{(d-k)}_{x_1,\dots,x_{k-1}}(x_{k})
\,d\sigma^{(d-k+1)}_{x_1,\dots,x_{k-2}}(x_{k-1})\cdots d\sigma(x_{1})\,dx.
\end{align*}

Since $d\geq k+3$
we can follow the argument in Section \ref{direct} and write
\bee
\sigma^{(d-k)}_{x_1,\dots,x_{k-1}}(x_{k+1})=\int_0^{\pi} (\sin\theta_1)^{d-k-1}\,d\sigma^{(d-k-1)}_{x_1,\dots,x_{k-1},x_k,\theta_1}(x_{k+1})\,d\theta_1
\eee
where $\sigma^{(d-k-1)}_{x_1,\dots,x_{k-1},x_k,\theta_1}(x_{k+1})$ denotes the normalized measure on the sphere
\bee
S^{d-k-1}_{x_1,\dots,x_{k-1},x_k,\theta_1}=S^{d-1}(0,|v_{k+1}|)\cap S^{d-1}(x_1,|v_{k+1}-v_1|)\cap\cdots\cap S^{d-1}(x_{k},|v_{k+1}-v_{k}|)\eee
with $v_{k+1}=v_{k+1}(\theta)$ satisfying
$
|v_{k+1}|=|v_k|,
$
$
|v_{k+1}-v_j|=|v_k-v_j|
$
for all $1\leq j\leq k-1$ and $\theta_1$ determining the angle between $v_{k+1}$ and $v_k$ so that
$ |v_{k+1}-v_k|=2|v_k|\sin(\theta_1/2).$

If we now let $\Delta_{k+1}(\theta_1)=\{0,v_1,\dots,v_k,v_{k+1}\}$, then it follows
(again using (\ref{observation2})) that
\bee
E_j(\lm)=\int_0^\pi (\sin\theta_1)^{d-k-1}T_{\Delta_{k+1}(\theta)}(1,\dots,1,\nu-1,\nu,\dots,\nu,f_{k}\nu,f_{k}\nu)(\lm)\,d\theta_1+O_k(\B^{-k-1+j}c_{\Delta_k}^{-1/6}\VE^{2/3})
\eee
where $T_{\Delta_{k+1}(\theta)}(1,\dots,1,\nu-1,\nu,\dots,\nu,f_{k}\nu,f_{k}\nu)(\lm)$ equals
\begin{align*}
&\iint\cdots\iiint \bigl[\nu(x-\lm x_{j})-1\bigr]\nu(x-\lm x_{j+1})\cdots \nu(x-\lm x_{k-1})   f_k \nu (x-\lm x_k)f_k \nu (x-\lm x_{k+1})\\
&\quad\quad\quad\quad\quad\quad\quad\quad\quad\quad\quad\quad\quad
\,d\sigma^{(d-k-1)}_{x_1,\dots,x_{k-1},x_k,\theta_1}(x_{k+1})\,d\sigma^{(d-k)}_{x_1,\dots,x_{k-1}}(x_{k})
\,d\sigma^{(d-k+1)}_{x_1,\dots,x_{k-2}}(x_{k-1})\cdots d\sigma(x_{1})\,dx.
\end{align*}

In light of (\ref{i}) and (\ref{ii}) it suffices to now show that
\bee
E_j'(\lm):=  \int_0^\pi (\sin\theta_1)^{d-k-1}T_{\Delta'_{k+1-j}(\theta_1)}(f_{k}\nu,f_{k}\nu,\nu,\dots\nu, \nu-1)(\lm)\,d\theta_1=O(\B^{-k-1+j}\VE^{1/4})\eee
where
\bee
\Delta'_{k+1-j}(\theta_1)=\{0,v_1',\dots,v'_{k+1-j}\}
\eee
with $v'_i=v_{k+1-i}-v_{k+1}$ for $0\leq i\leq k$ and $v'_{k+1}=-v_{k+1}$.

Since 
$|T_{\Delta'_{k+1-j}(\theta_1)}(f_{k}\nu,f_{k}\nu,\nu,\dots\nu, \nu-1)(\lm)|$ is dominated by
\begin{align*}
\iint\cdots\int \nu(x)\nu(x-\lm x_{1})\cdots \nu(x-\lm x_{k-j})\Bigl|  \int (\nu-1)(x-\lm & x_{k+1-j})  \,d\sigma'^{(d-k-1+j)}_{x_1,\dots,x_{k-j}}(x_{k+1-j}) \Bigr|\\
&\,d\sigma'^{(d-k+j)}_{x_1,\dots,x_{k-j-1}}(x_{k-j})\cdots d\sigma'(x_{1})\,dx
\end{align*}
it follows from  an application of Cauchy-Schwarz, facilitated by (\ref{observation2}) for the simplex $\Delta'_{k-j}(\theta_1)$, that
\bee
|T_{\Delta'_{k+1-j}(\theta_1)}(f_{k}\nu,f_{k}\nu,\nu,\dots,\nu, \nu-1)(\lm)|^2\leq 
2\,
 I_{\Delta'_{k+1-j}(\theta_1)}(\lm)
\eee
where
\begin{align*}
I_{\Delta'_{k+1-j}(\theta_1)}(\lm)=\iint\cdots\int \nu(x)\nu(x-\lm x_{1})\cdots \nu(x-\lm x_{k-j})\Bigl|  \int (\nu-1)(x-\lm & x_{k+1-j}) \,d\sigma'^{(d-k-1+j)}_{x_1,\dots,x_{k-j}}(x_{k+1-j}) \Bigr|^2\\
&\,d\sigma'^{(d-k+j)}_{x_1,\dots,x_{k-j-1}}(x_{k-j})\cdots d\sigma'(x_{1})\,dx.
\end{align*}
Squaring out we see that $I_{\Delta'_{k+1-j}(\theta_1)}(\lm)$ equals
\begin{align*}
\iint\cdots&\iiint \nu(x)\nu(x-\lm x_{1})\cdots \nu(x-\lm x_{k-j})(\nu-1)(x-\lm  x_{k+1-j})\,(\nu-1)(x-\lm  x_{k+2-j})\\
&\quad\quad\quad\quad\quad\quad\quad\quad\quad\quad \,d\sigma'^{(d-k-1+j)}_{x_1,\dots,x_{k-j}}(x_{k+2-j})
 \,d\sigma'^{(d-k-1+j)}_{x_1,\dots,x_{k-j}}(x_{k+1-j})
\,d\sigma'^{(d-k+j)}_{x_1,\dots,x_{k-j-1}}(x_{k-j})\cdots d\sigma'(x_{1})\,dx.
\end{align*}

Since $d\geq k+3$ we can again argue as above to obtain
\begin{align*}
I_{\Delta'_{k+1-j}(\theta_1)}(\lm)&=
\int_0^\pi (\sin\theta_2)^{d-k-2+j}\,\bigl[T_1(\lm)-T_2(\lm)-T_3(\lm)+T_4(\lm)\bigr]\,d\theta_2
\end{align*}
where
\[T_1(\lm)=T_{\Delta'_{k+2-j}(\theta_1,\theta_2)}(\nu,\dots,\nu)(\lm)\]
\[T_2(\lm)=T_{\Delta'_{k+2-j}(\theta_1,\theta_2)}(\nu,\dots,\nu, 1)(\lm)\]
\[T_3(\lm)=T_{\Delta'_{k+2-j}(\theta_1,\theta_2)}(\nu,\dots,1,\nu)(\lm)\]
\[T_4(\lm)=T_{\Delta'_{k+2-j}(\theta_1,\theta_2)}(\nu,\dots,\nu, 1,1)(\lm)\]
with
\bee
\Delta'_{k+2-j}(\theta_1,\theta_2)=\Delta'_{k+1-j}(\theta_1)\cup\{v'_{k+2-j}\}
\eee
with $v'_{k+2-j}=v'_{k+2-j}(\theta_2)$ satisfying
$
|v'_{k+2-j}|=|v'_{k+1-j}|,
$
$
|v'_{k+2-j}-v_i|=|v'_{k+1-j}-v_i|
$
for all $1\leq i\leq k-j$ and $\theta_2$ determining the angle between $v'_{k+2-j}$ and $v'_{k+1-j}$ so that
$ |v'_{k+2-j}-v'_{k+1-j}|=2|v_{j}|\sin(\theta_2/2).$

We have therefore ultimately established
\bee
|E_j'(\lm)|^2\leq C \int_0^\pi\int_0^\pi
\bigl|T_1(\lm)-T_2(\lm)-T_3(\lm)+T_4(\lm)\bigr|\,d\theta_2\,d\theta_1
\eee
for each $0\leq j\leq k$. In light of (\ref{observation2}) we know that
\bee
T_i(\lm)=1+O_k(\B^{-k-3+j}c_{\Delta'_{k+2-j}(\theta_1,\theta_2)}^{-1/6}\VE^{2/3})
\eee
for $i=1,\dots,4$, and hence
\bee
E_j'(\lm)=O_k(\B^{-k-3+j}\VE^{1/4})
\eee
provided
\bee
c_{\Delta'_{k+2-j}(\theta_1,\theta_2)}\geq\VE.
\eee
The result now follows since the fact that
\bee
c_{\Delta'_{k+2-j}(\theta_1,\theta_2)}= \min\{c_{\Delta_{k}}, 2|v_k|\sin(\theta_1/2), 2|v_j|\sin(\theta_2/2)\}
\eee
and $\VE\ll c_{\Delta_{k}}$ ensures that
\bee
|\{(\theta_1,\theta_2)\in[0,\pi]\times[0,\pi]\,:\, c_{\Delta'_{k+2-j}(\theta_1,\theta_2)}\leq\VE\}|=O(\VE).\qedhere
\eee
\end{proof}

\subsection{Proof of Lemma \ref{GvN11}}

The proof of Lemma \ref{GvN11} will follow from two applications of Cauchy-Schwarz combined with Proposition \ref{Propn00} and  Lemma \ref{relative}.
We first observe that if
\[
T_{\Delta_{k_1},\Delta_{k_2}}(\lm):=T_{\Delta_{k_1},\Delta_{k_2}}(f_{00}\widetilde{\nu},\dots,f_{k_1k_2}\widetilde{\nu})(\lm)
\]
then
\[
T_{\Delta_{k_1},\Delta_{k_2}}(\lm)=\iint
\nu_1(x-\lm U_1(v^1_0))\cdots\nu_1(x-\lm U_1(v^1_{k_1}))
\,T_{\Delta_{k_2}}(g_0^{x,U_1}\nu_2,g_1^{x,U_1},\dots,g_{k_2}^{x,U_1}\nu_2)(\lm)\,d\mu_1(U_1)\,dx
\]
where
\bee
g_j^{x,U_1}(y)=f_{0j}(x-\lm\cdot U_1(v^1_0),y)\cdots f_{k_1j}(x-\lm\cdot U_1(v^1_{k_1}),y)
\eee
 for each $j=0,1,\dots,k_2$ and that Lemma \ref{relative} implies
\[
\left|T_{\Delta_{k_2}}(g_0^{x,U_1}\nu_2,\dots,g_{k_2}^{x,U_1}\nu_2)(\lm)\right|^2
\leq\iint g_{j}^{x,U_1}\nu_2(y)g_{j}^{x,U_1}\nu_2(y-y_1)\psi_{2,\VE^4\lm}(y_1)\,dy\,dy_1+O_{k_2}(\B^{-3k_2-3}c_{\Delta_{k_2}}^{-1/2}\VE^{1/4})
\]
for any $0\leq j\leq k_2$.
Hence by Cauchy-Schwarz, using (\ref{observation2}) for $T_{\Delta_{k_1}}(\nu_1,\dots,\nu_1)(\lm)$, and switching the order of integration we obtain that $|T_{\Delta_{k_1},\Delta_{k_2}}(\lm)|^2$ is majorized by
\[
\iint T_{\Delta_{k_1}}(h_{0j}^{y,y_1}\nu_1,\dots,h_{k_1j}^{y,y_1}\nu_1)(\lm)\,\nu_2(y)\nu_2(y-y_1)\psi_{2,\VE^4\lm}(y_1)\,dy\,dy_1+O_{k_1,k_2}(\B_1^{-k_1-1}\B_2^{-3k_2-3}c_{\Delta_{k_1}}^{-1/6}c_{\Delta_{k_2}}^{-1/2}\VE^{1/4})
\]
for any $0\leq j\leq k_2$ where
\bee
h_{ij}^{y,y_1}(x)=f_{ij}(x,y)f_{ij}(x,y-y_1)
\eee
for $i=0,1,\dots,k_1$.
A further application of Cauchy-Schwarz (using the fact that $\psi_{2,\VE^4\lm}$ is $L^1$-normalized) and appeal to  Lemma \ref{relative} reveals that $|T_{\Delta_{k_1},\Delta_{k_2}}(\lm)|^4$ is majorized by
\begin{align*}
\iiiint h_{ij}^{y,y_1}\nu_1(x)h_{ij}^{y,y_1}\nu_1(x-x_1)
\,\nu_2(y)\nu_2(y-y_1)\psi_{1,\VE^4\lm}(x_1)\,&\psi_{2,\VE^4\lm}(y_1)\,dx\,dx_1\,dy\,dy_1\\
&+O_{k_1,k_2}(\B_1^{-4k_1-4}\B_2^{-3k_2-4}c_{\Delta_{k_1}}^{-1/2}c_{\Delta_{k_2}}^{-1/2}\VE^{1/4})
\end{align*}
for any $0\leq i\leq k_1$ and $0\leq j\leq k_2$.
Since
\[
h_{ij}^{y,y_1}\nu_1(x)h_{ij}^{y,y_1}\nu_2(x-x_1)\nu_2(y)\nu_2(y-y_1)=f_{ij}\nu(x,y)f_{ij}\nu(x-x_1,y)f_{ij}\nu(x,y-y_1)f_{ij}\nu(x-x_1,y-y_1).
\]
the result follows from (\ref{almost2}).
\qed

\subsection{Inverse Theorem Revisited}

We complete this section by noting the following immediate consequence of Theorem \ref{InvThm} which together with Corollary \ref{CorGvN11} implies Proposition \ref{part11}.

\begin{cor}\label{InvCor11}
Let $0<\A,\B_1,\B_2\leq 1$ and $B_1$ and $B_2$ be $(\VE,\VE^4\lm)$-uniformly distributed subsets of $[0,1]^{d_1}$ and $[0,1]^{d_2}$  with $0<\lm\leq\VE\ll\B_1^{k_1+1}\B_2^{k_2+1}\A^{8(k_1+1)(k_2+1)}$.
If $A\subseteq B_1\times B_2\subseteq[0,1]^{d_1}\times[0,1]^{d_2}$ with $|A|=\A\B_1\B_2$ and
\[\|f_A\nu\|_{\Box(\VE^4\lm)}\gg\A^{(k_1+1)(k_2+1)}\]
with $f_A=1_A-\A1_{B_1\times B_2}$,
then there exist cubes $Q_i\subseteq[0,1]^{d_i}$ of side-length $\VE^4\lm$ and sets $B_i'$  in $Q_i$ for which
\bee
\frac{|A\cap(B_1'\times B_2')|}{|B_1'\times B_2'|}\geq \A+c\,\A^{8(k_1+1)(k_2+1)}.
\eee
\end{cor}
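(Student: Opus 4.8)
The plan is to deduce Corollary \ref{InvCor11} directly from Theorem \ref{InvThm} applied to $f=f_A=1_A-\A 1_{B_1\times B_2}$, in exactly the way Corollary \ref{InvCor1} is deduced from it; all that is needed is to check the hypotheses of Theorem \ref{InvThm} and then to unwind the weight $\nu_1\otimes\nu_2$ and the cube-volume normalisation appearing in its conclusion.

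First I would set $L=\VE^4\lm$ and $\eta=c_0\A^{(k_1+1)(k_2+1)}$, where $c_0>0$ is the absolute constant implicit in the hypothesis $\|f_A\nu\|_{\Box(\VE^4\lm)}\gg\A^{(k_1+1)(k_2+1)}$, so that $\|f_A\nu\|_{\Box(L)}\geq\eta$. The mean-zero hypothesis of Theorem \ref{InvThm} is immediate: since $A\subseteq B_1\times B_2$ and $|A|=\A\B_1\B_2$, one has $\iint 1_A(x,y)\nu_1(x)\nu_2(y)\,dx\,dy=(\B_1\B_2)^{-1}|A|=\A=\iint \A 1_{B_1\times B_2}(x,y)\nu_1(x)\nu_2(y)\,dx\,dy$, so $\iint f_A(x,y)\nu_1(x)\nu_2(y)\,dx\,dy=0$. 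For the size hypothesis $0<L\leq\VE\ll\eta^8\B_1^2\B_2^2$ I would note that $L=\VE^4\lm\leq\VE$, and that, because $\B_i\leq1$ and $k_i\geq1$ force $\B_i^{k_i+1}\leq\B_i^2$, the standing hypothesis $0<\lm\leq\VE\ll\B_1^{k_1+1}\B_2^{k_2+1}\A^{8(k_1+1)(k_2+1)}$ gives $\VE\ll\B_1^2\B_2^2\A^{8(k_1+1)(k_2+1)}\asymp\eta^8\B_1^2\B_2^2$ (the absolute constant $c_0^8$ being harmless).

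Theorem \ref{InvThm} would then supply cubes $Q_i\subseteq[0,1]^{d_i}$ of side-length $\VE^4\lm$ and sets $B_i'\subseteq B_i\cap Q_i$ with $L^{-(d_1+d_2)}\iint_{B_1'\times B_2'}f_A(x,y)\nu_1(x)\nu_2(y)\,dx\,dy\geq c\eta^8$. Since $B_1'\times B_2'\subseteq B_1\times B_2$, on this set $\nu_1(x)\nu_2(y)\equiv(\B_1\B_2)^{-1}$ and $f_A=1_A-\A$, so this inequality rearranges to
\[
|A\cap(B_1'\times B_2')|-\A\,|B_1'\times B_2'|\geq c\,\eta^8\,\B_1\B_2\,L^{d_1+d_2}.
\]
I would then divide by $|B_1'\times B_2'|$ and use that $B_1$ and $B_2$ are $(\VE,\VE^4\lm)$-uniformly distributed, which forces the cubes $Q_i$ produced inside the proof of Theorem \ref{InvThm} to lie in the good set $G_\VE$, so that $|B_i\cap Q_i|\leq 2\B_i L^{d_i}$ (using $\VE\ll\B_i^2$, again a consequence of the size hypothesis); hence $|B_1'\times B_2'|\leq 4\B_1\B_2 L^{d_1+d_2}$ and therefore
\[
\frac{|A\cap(B_1'\times B_2')|}{|B_1'\times B_2'|}-\A\geq\frac{c\,\eta^8\,\B_1\B_2\,L^{d_1+d_2}}{4\,\B_1\B_2\,L^{d_1+d_2}}=\frac{c}{4}\,\eta^8\gg\A^{8(k_1+1)(k_2+1)},
\]
which is the claim after renaming the constant.

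The argument is essentially bookkeeping, and the only step requiring a moment's thought is the cancellation of $\B_1\B_2$ in the last display: the trivial bound $|B_i'|\leq|Q_i|=L^{d_i}$ would leave an unwanted factor $\B_1\B_2$ in the density increment, so one genuinely uses the uniform distribution of the $B_i$ to know that the cubes supplied by Theorem \ref{InvThm} contain a near-full $\B_i$-proportion of $B_i$. Everything else parallels the (omitted) deduction of Corollary \ref{InvCor1}.
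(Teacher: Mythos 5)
Your deduction follows exactly the route the paper intends: Corollary \ref{InvCor11}, like Corollary \ref{InvCor1}, is meant to be read off from Theorem \ref{InvThm} applied to $f_A$ with $L=\VE^4\lm$ and $\eta\asymp\A^{(k_1+1)(k_2+1)}$, and your verification of the hypotheses (the mean-zero identity, $L=\VE^4\lm\le\VE$, and $\VE\ll\B_1^{k_1+1}\B_2^{k_2+1}\A^{8(k_1+1)(k_2+1)}\le\B_1^2\B_2^2\A^{8(k_1+1)(k_2+1)}$ since $\B_i\le1$, $k_i\ge1$) together with the rearrangement $|A\cap(B_1'\times B_2')|-\A|B_1'\times B_2'|\ge c\,\eta^8\B_1\B_2 L^{d_1+d_2}$ is correct, and indeed more explicit than the paper, which omits the argument entirely.

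The one step that is not fully watertight is your justification of $|B_i'|\le 2\B_iL^{d_i}$. This bound is not part of the \emph{statement} of Theorem \ref{InvThm}, and your claim that ``the cubes produced inside the proof lie in $G_\VE$'' is not quite accurate as written: the opening branch of that proof (``if \eqref{3.25} already holds for some cubes $Q_i$ with $B_i'=B_i\cap Q_i$'') imposes no restriction on the cubes, and if the conclusion were realized only through that branch on a cube where $B_i$ is atypically dense, your division would yield only an increment of order $\eta^8\B_1\B_2$, which is weaker than the stated $c\,\A^{8(k_1+1)(k_2+1)}$ and would degrade in the later iteration. The fix is easy and worth recording: run the opening dichotomy in the proof of Theorem \ref{InvThm} only over $(t_1,t_2)\in G_\VE$, i.e.\ assume \eqref{3.27} merely for $(t_1,t_2)\in G_\VE$. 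If some good cube satisfies \eqref{3.25} with $B_i'=B_i\cap Q_i$ you get $|B_i\cap Q_i|\le 2\B_iL^{d_i}$ directly from the definition of $G_\VE$ (using $\VE^{1/2}\ll\B_i$); in the remaining case the cubes produced lie in $G_{\eta,\VE}\subseteq G_\VE$, as you say; and the concluding contradiction still closes because $|I(t_1,t_2)|\le\B_1^{-1}\B_2^{-1}$ pointwise and $|G_\VE^c|\lesssim\VE$, so the contribution of $G_\VE^c$ to $\iint I(t_1,t_2)\,dt_1\,dt_2$ is $O(\VE\B_1^{-1}\B_2^{-1})\ll\eta^8$ by the hypothesis $\VE\ll\eta^8\B_1^2\B_2^2$. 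With that adjustment your bookkeeping, including the cancellation of $\B_1\B_2$, gives the corollary as stated.
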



\section{Proof of Proposition \ref{Propn1}, Part II: Regularization}\label{SecPart2}

To complete the proof of Proposition \ref{Propn1}, as was noted after the Proposition \ref{part1},
we need to now produce a pair of new sets $B_1''$ and $B_2''$ that are $(\eta,L')$-uniformly distributed for a sufficiently small $\eta$ and for $L'$ attached to some of the $\lm_j$'s, but for which $A$ still has increased density on $B_1''\times B_2''$. Proposition \ref{part1} did produce a pair of sets $B_1'$ and $B_2'$ for which $A$ has increased density on $B_1'\times B_2'$, but these sets are not necessarily uniformly distributed. We will now obtain sets $B_1''$ and $B_2''$ with the desired properties from the sets $B_1$ and $B_2$ produced by Proposition \ref{part1} by appealing to a version of Szemer\'edi's Regularity Lemma \cite{Sz}  adapted to a sequence of scales $\{L_j\}_{1\leq j\leq J}$.

The precise result we need is stated below in Theorem \ref{reglemma},
but first we state a couple of definitions.

\begin{defn}[A partition $\PP$ being adapted to scale $L_j$]
Let $1=L_0> L_1>L_2>\cdots >0$ be a sequence with the property that $L_{j+1}<\frac{1}{2} L_j$.
We say that a partition $\PP=\QQ\cup\RR$ of $[0,1]^{d_1}\times [0,1]^{d_2}$ into cubes $\QQ$
and ``rectangles" $\RR$ is \emph{adapted to the scale $L_j$} if each of the cubes in $\QQ$ have sidelength $L_i$ for some $0\leq i\leq j$.
\end{defn}

\begin{defn}[$(\VE,L)$-uniform distribution on $Q$]
Let $Q$ be a cube of sidelength $L_0$ and
$0<L/L_0\leq\VE\ll1$.

A set $B\subseteq Q$ is said to be $(\VE,L)$-uniformly distributed on $Q$ if
\be\label{3.3.1}
\frac{1}{|Q|}\int_{Q}\left|\frac{|B\cap(t+Q_{L})|}{|Q_{L}|}-\frac{|B|}{|Q|}\,\right|^2\,dt\leq\VE^2.
\ee
\end{defn}

\begin{thm}[Regularity Lemma]\label{reglemma} Let $0<\Be_1,\Be_2,\eta\leq1$ and $B_i\subseteq[0,1]^{d_i}$ with $|B_i|=\Be_i$ for $i=1,2$.

Given any sequence $1=L_0> L_1>\cdots >0$ with $L_{j+1}<\frac{1}{2} L_j$ there exists $0\leq j<j'\leq J(\B_1,\B_2,\eta)$ and a partition $\PP=\QQ\cup\RR$ of $[0,1]^{d_1}\times [0,1]^{d_2}$ adapted to the scale $L_j$ with the following properties:
\begin{itemize}
\item[(i)]  For every cube $Q=Q_1\times Q_2$ in $\QQ$ of sidelength $L_i$ with $0\leq i\leq j-1$, the sets $B_1$ and $B_2$ are $(\eta,L_{j'})$-uniformly distributed on the cubes $Q_1$ and $Q_2$ respectively.\\

\item[(ii)] If $\NN$ denotes the collection of cubes in $Q=Q_1\times Q_2$ in $\QQ$ of sidelength $L_j$ for which at least one of the sets $B_1$ and $B_2$ is \emph{not} $(\eta,L_{j'})$-uniformly distributed on the cubes $Q_1$ and $Q_2$ respectively, then
\[\sum_{Q\in \NN} |Q|+ \sum_{R\in \RR} |R|\leq\eta.\]
\end{itemize}
\end{thm}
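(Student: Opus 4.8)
The plan is to run the standard energy-increment (Szemer\'edi-type) iteration, but carried out simultaneously in both factors $[0,1]^{d_1}$ and $[0,1]^{d_2}$ and along the prescribed sequence of scales $\{L_j\}$. First I would fix the quantity to be incremented: for a partition $\PP=\QQ\cup\RR$ adapted to scale $L_j$, and for $i=1,2$, consider the ``energy''
\bee
\mathcal{E}_i(\PP)=\sum_{Q=Q_1\times Q_2\in\QQ}\frac{|Q|}{|Q_i|^2}\,|B_i\cap Q_i|^2,
\eee
or more precisely the $L^2$-norm of the conditional expectation of $1_{B_i}$ on the $\sigma$-algebra generated by the projections of the cubes of $\QQ$ onto the $i$-th factor. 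By Bessel/Pythagoras this lies in $[0,1]$, and it is nondecreasing under refinement. Summing, $\mathcal{E}(\PP)=\mathcal{E}_1(\PP)+\mathcal{E}_2(\PP)\in[0,2]$.

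Next I would set up the iteration. Start with $j=0$ and $\PP_0$ the trivial partition ($\QQ=\{[0,1]^{d_1}\times[0,1]^{d_2}\}$, $\RR=\emptyset$). Given $\PP_j$ adapted to scale $L_j$, choose $j'=j+1$ and examine, for each cube $Q=Q_1\times Q_2\in\QQ_j$ of sidelength $L_i$ with $i\le j$, whether $B_1$ is $(\eta,L_{j'})$-uniformly distributed on $Q_1$ and $B_2$ on $Q_2$ (after rescaling $Q_1,Q_2$ to unit cubes). If the cubes where this fails have total measure $\le\eta$, we stop and output $\PP_j$ with this $j'$; properties (i) and (ii) are then exactly what we have verified (the cubes of sidelength $<L_j$ in $\QQ_j$ are uniformly distributed by the previous stopping-failure at that stage, see below). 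Otherwise, on each bad cube $Q=Q_1\times Q_2$, failure of $(\eta,L_{j'})$-uniform distribution of, say, $B_1$ on $Q_1$ means
\bee
\frac{1}{|Q_1|}\int_{Q_1}\Bigl|\frac{|B_1\cap(t+Q_{L_{j'}})|}{|Q_{L_{j'}}|}-\frac{|B_1\cap Q_1|}{|Q_1|}\Bigr|^2\,dt>\eta^2,
\eee
so subdividing $Q_1$ into subcubes of sidelength $L_{j'}$ (discarding a boundary layer of relative measure $O(L_{j'}/L_i)=O(\eta)$, which we throw into $\RR$) increases the contribution of $Q$ to $\mathcal{E}_1$ by a definite amount proportional to $\eta^2|Q|$; an analogous statement holds for $B_2$. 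Doing this on all bad cubes simultaneously (and subdividing the remaining $L_i$-cubes in $\QQ_j$ into $L_{j+1}$-subcubes as well, so the new partition $\PP_{j+1}$ is genuinely adapted to scale $L_{j+1}$) yields $\mathcal{E}(\PP_{j+1})\ge\mathcal{E}(\PP_j)+c\,\eta^3$ for an absolute constant $c>0$, using that the bad cubes had total measure $>\eta$. Since $\mathcal{E}\le 2$, this can happen at most $J=J(\eta)=O(\eta^{-3})$ times; and $J$ only needs to depend on $\eta$ (not on $\B_1,\B_2$), although one may harmlessly record the dependence on $\B_1,\B_2$ as in the statement. At the stopping stage $j$, every cube in $\QQ_j$ of sidelength $L_i$ with $i\le j-1$ survived subdivision at stage $i$, hence was \emph{not} flagged as bad at that stage, i.e.\ $B_1,B_2$ are $(\eta,L_{i+1})$-uniformly distributed on its factors; since $L_{j'}=L_{j+1}\le L_{i+1}$ and uniform distribution at one scale passes to all smaller scales up to an $O(L_{j'}/L_{i+1})$ error that is absorbed, property (i) follows. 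This gives the stated $j<j'$.

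The main obstacle, and the only genuinely technical point, is the bookkeeping of the boundary-layer errors and the implication ``$(\eta,L)$-uniform distribution at scale $L$ implies $(\eta',L')$-uniform distribution at every finer scale $L'<L$'' with $\eta'$ only mildly larger — so that the single parameter $\eta$ can be propagated cleanly through the finitely many stages. This is routine: passing to a finer averaging scale $L'$ only averages the quantity $|B\cap(t+Q_L)|/|Q_L|$ further, and the difference between the two mean-square deviations is $O(L'/L + L)$; one simply shrinks the working $\eta$ at the start by a fixed factor to absorb the finitely many such losses. The other routine point is that after discarding boundary layers into $\RR$ at each of the $\le J$ stages, the total rectangle-measure accumulated is $O(J\cdot\eta)$, so one runs the whole argument with $\eta$ replaced by $c\eta/J$ at the outset; since $J$ depends only on (this rescaled) $\eta$ one checks there is no circularity by choosing constants in the right order. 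With these adjustments (i) and (ii) hold as stated. \qed
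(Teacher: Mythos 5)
Your overall plan (the same per-factor energy functional, an increment step whenever the non-uniform cells occupy measure $>\eta$, stopping otherwise) is the paper's, but two of your steps do not work as written. The first and most serious gap is the passage from failure of $(\eta,L_{j'})$-uniform distribution on $Q_1$ to an energy increment $\gtrsim\eta^2|Q|$ after ``subdividing $Q_1$ into subcubes of sidelength $L_{j'}$''. The uniformity defect is an $L^2$ average over \emph{all} translates $t$, whereas the energy increment only sees the one fixed grid you impose; with a corner-anchored grid the increment can be zero even though uniformity fails badly. For instance, with $d_1=1$, $Q_1=[0,1]$, $L_{j'}=1/2$ and $B_1=[0,1/4]\cup[3/4,1]$, both grid cells have density exactly $1/2$ (so refining along the grid gives no energy gain), yet the window $[1/4,3/4]$ has density $0$. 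The missing idea, which is the technical heart of the paper's proof, is the choice of a good \emph{shift} of the grid: one first shows (Chebyshev) that the translates with deviation at least $\eta/2$ have measure $\geq\frac{\eta^2}{2}|Q_1|$, then pigeonholes over shifts $s_1$ to find a shifted grid a $\gtrsim\eta^2$ proportion of whose cells realize this deviation; only then does an increment (of size $\gtrsim\eta^4|Q|$ in the paper's accounting, or $\gtrsim\eta^2|Q|$ with a sharper averaging identity) follow, and the leftover boundary strips of the shifted grid are precisely why the class $\RR$ of rectangles appears in the statement.

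The second gap concerns property (i) and the bookkeeping. You justify (i) for the larger surviving cubes by asserting that ``uniform distribution at one scale passes to all smaller scales up to an $O(L_{j'}/L_{i+1})$ error''. This is false: the density at scale $L$ is an average of densities at scales $L'<L$, so by Jensen it is fine-scale uniformity that implies coarse-scale uniformity, not the reverse (a set made of tiny clumps spread evenly is uniform at coarse scales and wildly non-uniform at fine scales) — indeed, if your claim were true the whole multi-scale iteration would be unnecessary. So cubes certified at an earlier stage cannot be carried to the final scale $L_{j'}$ this way; you must either re-test all surviving cubes at the current scale at every stage and feed newly failing ones back into the increment step, or record, as the paper does, the scale at which each retained cube was certified. (Your write-up is also internally inconsistent about whether good cubes are subdivided at each stage.) Finally, your bound $O(L_{j'}/L_i)=O(\eta)$ for each discarded boundary layer does not follow from the hypothesis $L_{j+1}<\frac{1}{2}L_j$ alone, and replacing $\eta$ by $c\eta/J$ controls the number of steps but not this per-step ratio; the paper fixes it by first passing to a subsequence with $L_{j+1}\leq 2^{-(j+6)}\eta L_j$, which is the adjustment your argument actually needs.
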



The proof of Theorem \ref{reglemma} follows by standard arguments, for completeness we include it in Section \ref{reglemproof}.\\

An almost immediate consequence of Theorem \ref{reglemma} is the following Corollary which, together with Proposition \ref{part1}, provides a complete proof of Proposition \ref{Propn1}, the easy verification of this we leave to the reader.

\begin{cor}\label{cor3.3}
Let $0<\A,\B_1,\B_2,\tau,\eps\leq1$ and $A\subseteq B_1\times B_2\subseteq[0,1]^{d_1}\times [0,1]^{d_2}$ with $|A|\geq (\al+\tau)\B_1\B_2$ and $|B_i|=\Be_i$ for $i=1,2$.
Given any sequence $1=L_0> L_1>\cdots >0$ with $L_{j+1}<\frac{1}{2} L_j$, there exist $0\leq j<j'\leq J(\A,\B_1,\B_2,\tau,\eps)$ and squares $Q_1,\,Q_2$ of sidelength $L_j$ such that the sets
\[B_i':=B_i\cap Q_i\]
with $i=1,2$ have the following properties:
\begin{itemize}
\item[(i)]
 $|B'_i|\geq \dfrac{1}{3}\Be_i\tau|Q_i|$.\\

\item[(ii)] $B_i'$ is $(\eps, L_{j'})$-uniformly distributed on $Q_i$\\

\item[(iii)] $\dfrac{|A\cap(B'_1\times B'_2)|}{|B'_1\times B'_2|}\geq \al+\dfrac{\tau}{3}$.
\end{itemize}
\end{cor}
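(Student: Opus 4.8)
The plan is to apply Theorem \ref{reglemma} to the sets $B_1$ and $B_2$ with the uniformity parameter $\eta$ chosen sufficiently small in terms of $\A$, $\B_1$, $\B_2$, $\tau$ and $\eps$ (in particular $\eta\leq\eps$ and $\eta$ much smaller than $\tau\,\B_1\B_2$), obtaining scales $0\leq j<j'$ and a partition $\PP=\QQ\cup\RR$ adapted to scale $L_j$. The idea is then a pigeonholing argument: the density of $A$ inside $B_1\times B_2$ is at least $\al+\tau$, and we want to locate a single cube $Q=Q_1\times Q_2\in\QQ$ of sidelength $L_j$, on which $B_1$ and $B_2$ are $(\eps,L_{j'})$-uniformly distributed, that is not too small and on which $A$ still has density at least $\al+\tau/3$ relative to $B_1'\times B_2'$ where $B_i'=B_i\cap Q_i$.

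First I would discard the bad mass. By property (ii) of Theorem \ref{reglemma}, the cubes in $\NN$ (those of sidelength $L_j$ on which one of the $B_i$ fails to be $(\eta,L_{j'})$-uniformly distributed) together with the rectangles $\RR$ have total measure at most $\eta$; since $|A|\leq|B_1\times B_2|$ trivially, and more to the point the portion of $B_1\times B_2$ lying over $\NN\cup\RR$ has measure at most $\eta$ (as $\nu_i\leq\B_i^{-1}$, so $|(B_1\times B_2)\cap E|\leq|E|$ is false in general — rather one uses $|(B_1\times B_2)\cap(Q_1\times Q_2)|\leq|Q_1\times Q_2|$ cube-by-cube, giving total $\leq\eta$). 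Hence the contribution of $A$ restricted to $\NN\cup\RR$ to the average density is $O(\eta/(\B_1\B_2))$, which is $\leq\tau/100$ by the choice of $\eta$. Similarly cubes of sidelength $L_i$ with $i\leq j-1$ that lie in $\QQ$ contribute, but on those $B_1,B_2$ are already $(\eta,L_{j'})$-uniformly distributed by property (i), so they are "good" too. Thus on the union of good cubes $Q=Q_1\times Q_2$ (sidelength $L_i$, $0\leq i\leq j$, with $B_1,B_2$ uniformly distributed on $Q_1,Q_2$), the set $A$ must have average density, weighted by $|(B_1\times B_2)\cap Q|$, at least $\al+\tau-\tau/100\geq\al+\tau/2$.

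Next I would run the pigeonhole over these good cubes. Writing $D(Q)=|A\cap(B_1\times B_2)\cap Q|/|(B_1\times B_2)\cap Q|$ and noting the weights sum to at least (total mass of $B_1\times B_2$) minus $O(\eta)$, a weighted average of the $D(Q)$ exceeds $\al+\tau/2$, so some good cube $Q=Q_1\times Q_2$ has $D(Q)\geq\al+\tau/2>\al+\tau/3$; this is (iii). For (i), we must also ensure this cube is not too small: if all good cubes with $D(Q)\geq\al+\tau/3$ had $|(B_1\times B_2)\cap Q|$ much smaller than expected, they could not carry enough mass — more carefully, among the good cubes of each fixed sidelength $L_i$ the total $B_1\times B_2$-mass is $\sum|B_1\cap Q_1|\,|B_2\cap Q_2|$, and we can throw away those cubes where $|B_1\cap Q_1|<\frac13\B_1\tau|Q_1|$ or $|B_2\cap Q_2|<\frac13\B_2\tau|Q_2|$: their total mass is at most $\tfrac13\tau(\B_1+\B_2)\leq\tfrac23\tau$ times the full mass, hence removing them still leaves average density above $\al+\tau/2-\tfrac23\tau\cdot(\text{something})$ — here one must be a little careful and instead remove them \emph{before} averaging, so that the surviving mass is $\geq(1-\tfrac23\tau-O(\eta))$ of the total and the weighted average density of the survivors is still $\geq\al+\tau/3$. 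Finally, property (ii) of the Corollary is exactly property (i) of Theorem \ref{reglemma} with $\eps$ in place of $\eta$ (valid since $\eta\leq\eps$), applied to whichever sidelength $L_i$, $i\leq j$, our chosen cube has; if $i=j$ we instead invoke that $Q\notin\NN$, i.e. the defining "good" property.

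The main obstacle, as flagged in the text ("the easy verification of this we leave to the reader"), is purely bookkeeping: one has to order the discarding steps correctly — first remove $\NN\cup\RR$, then remove small cubes, then pigeonhole — so that each removal only costs a controlled fraction of the relevant average, and choose the chain of smallness conditions $\eta\ll\eps$, $\eta\ll\tau\B_1\B_2$ (and $J$ depending on all parameters) so that the cumulative loss from $\al+\tau$ stays above $\al+\tau/3$. There is no analytic difficulty here; the only genuine input is Theorem \ref{reglemma} itself.
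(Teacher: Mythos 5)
Your proposal is correct and is essentially the paper's own argument: fix $\eta$ small in terms of all the parameters, apply Theorem \ref{reglemma}, discard the cells of $\NN\cup\RR$ and the sparse cubes, and pigeonhole over the remaining uniformly distributed cubes, exactly as the paper does (and the paper likewise leaves the final bookkeeping to the reader). When writing it up, just tighten two points: the $B_1\times B_2$-mass of the sparse cubes is bounded by $\tfrac{2\tau}{3}\B_1\B_2$ (each cell sparse in the first coordinate has mass at most $\tfrac{\tau}{3}\B_1\,|Q\cap([0,1]^{d_1}\times B_2)|$, and these cells are disjoint), not by $\tfrac13\tau(\B_1+\B_2)$, and to keep the surviving weighted density above $\al+\tfrac{\tau}{3}$ with these exact thresholds you must take $\eta$ small compared to $\tau(\al+\tau)\B_1\B_2$, i.e.\ use the dependence of $J$ on $\al$ that the statement already permits.
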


\begin{proof}[Proof that Theorem \ref{reglemma} implies Corollary \ref{cor3.3}]

Let $\eta=\eps\Be_1\Be_2\tau/3$ and $\PP=\QQ\cup\RR$ be a partition of $[0,1]^{d_1}\times [0,1]^{d_2}$ adapted to the scale $L_j$ that satisfies the conclusions of Theorem \ref{reglemma} for some $0\leq j<j'\leq J(\B_1,\B_2,\eta)$.


Let $B=B_1\times B_2$ and $\UU$ denote the collection of all cubes in $Q=Q_1\times Q_2$ in $\QQ$ of sidelength $L_i$ with $0\leq i\leq j$ for which $B_1$ and $B_2$ are $(\eta,L_{j'})$-uniformly distributed on $Q_1$ and $Q_2$ respectively. Note that property (ii) of Corollary \ref{cor3.3}  holds by definition for all cubes $Q_1$ and $Q_2$ for which $Q_1\times Q_2\in\UU$.

If we let $\mS$ denote the collection of all cubes $Q$ in $\UU$ which are \emph{sparse} in the sense that $|B\cap Q|<\Be\tau|Q|/3$, then property (i) of Corollary \ref{cor3.3}  will hold by definition for all cubes $Q_1$ and $Q_2$ with $Q_1\times Q_2\in\UU\setminus\mS$.
Finally,
 it is straightforward to  see, using property (ii) of our partition $\PP$ (on the size of $\NN$ and $\RR$) and our assumption on the relative density of $A$ on $B$,  that property (iii) of Corollary \ref{cor3.3} must hold for at least one cube $Q$ in $\UU\setminus\mS$. 
\end{proof}

\subsection{Proof of Theorem \ref{reglemma}}\label{reglemproof}

By passing to a subsequence we may assume $L_{j+1}\leq 2^{-(j+6)}\eta L_j$, and in this case we will show that the conclusions of the theorem hold with $j'=j+1$ for some $0\leq j\leq J(\B_1,\B_2,\ga,\eta)$.

For $j=0,1,2,\dots$ we construct partitions $\PP^{(j)}$ of $[0,1]^{d_1}\times [0,1]^{d_2}$ into cubes $\QQ^{(j)}$ and rectangles $\RR^{(j)}$ starting from the trivial partition $\PP^{(0)}$ consisting of only one cube $Q=[0,1]^{d_1}\times [0,1]^{d_2}$. The partition $\PP^{(j)}$ will consists of two collections of cubes $\UU^{(j)},\NN^{(j)}$ and a collection of rectangles $\RR^{(j)}$, that is
\[\PP^{(j)}=\UU^{(j)}\cup\NN^{(j)}\cup\RR^{(j)}.\]
The collection $\RR^{(j)}$ will consist of rectangles $R=R_1\times R_2$ whose total measure is small, specifically
\be\label{3.4.2}
\sum_{R\in \RR^{(j)}} |R|\leq \frac{\eta}{2},
\ee
while the collection $\UU^{(j)}$ will consist of cubes $Q=Q_1\times Q_2$ of sidelength $L_i$ for some $1\leq i\leq j$ such that $B_1$ and $B_2$ are $(\eta,L_{i+1})$-uniformly distributed on $Q_1$ and $Q_2$ respectively. Note that the cubes in $\UU^{(j)}$ may have different sizes.
The remaining collection  $\NN^{(j)}$ will consist of those cubes $Q$ of sidelength $L_j$ which are not $(\eta,L_{j+1})$-uniformly distributed.
We will stop the procedure when the total measure of the non-uniform cubes is small enough, specifically when
\be\label{3.4.3}
\sum_{Q\in \NN^{(j)}} |Q|\leq \frac{\eta}{2}
\ee
and note that such a partition satisfies the conclusions of Theorem \ref{reglemma}.

If  $[0,1]^{d_1}\times [0,1]^{d_2}\in\UU^{(0)}$,  then the sets $B_1,\,B_2$ are both $(\eps, L_1)$-uniformly distributed and Theorem \ref{reglemma} holds. We thus assume that for some $j\geq 0$ we have a partition $\PP^{(j)}$ for which \eqref{3.4.3} does not hold and let
 $Q=Q_1\times Q_2$ denote an arbitrary  cube in $\NN^{(j)}$. By our assumption both cubes have sidelength $L_j$ and $B_i$ is not $(\eta,L_{j+1})$-uniformly distributed on $Q_i$ for either $i=1$ or $i=2$.

 We assume, without loss of generality, that $i=1$.
Averaging show that for $Q_1=t_1+[0,L_j]^{d_1}$ and $L:=L_{j+1}$, we have
\be\label{3.4.5}
|E_\eta|\geq \frac{\eta^2}{2} |Q_1|
\ee
where
\be
E_\eta :=\left\{t\in Q_1\,:\, \left|\frac{|B_1\cap(t+Q_L)|}{|Q_L|}-\frac{|B_1\cap Q_1|}{|Q_1|}\right| \geq \frac{\eta}{2}\right\}.
\ee

Let $m=\lfloor L_j/L_{j+1}\rfloor$  and partition the cube $Q_1'=t_1+[0,(m+1)L]^{d_1}\supseteq Q_1$ into grids of the form $G(s_1)=s_1+\{0,L,\ldots ,mL\}^{d_1}$ with $s_1$ running through the cube $t_1+[0,L]^{d_1}$. Since $L<2^{-6}L_j$, by \eqref{3.4.5} there exist $s_1\in Q'_1$ such that
\be\label{3.4.51}
\frac{|G(s_1)\cap E_\eta|}{m^{d_1}}\geq \frac{\eta^2}{4}.
\ee

Fix such an $s_1$ and consider the partition of $Q_1$ into cubes of size $L=L_{j+1}$ and possibly rectangles, defined by the grid $G(s_1)$. Repeat the same partition of the cube $Q_2$ corresponding to a point $s_2$ which we can choose arbitrarily from a cube $Q'_2\subseteq Q_2$ of size $L$. Taking the direct product of these partitions gives a partition of the cube $Q=Q_1\times Q_2$ into cubes of size $L=L_{j+1}$ and possibly also into some $(d_1\times d_2)$-dimensional rectangles. After performing  this partition of all cubes in $\NN^{(j)}$ we obtain the new partition $\PP^{(j+1)}$ of $[0,1]^{d_1}\times [0,1]^{d_2}$.
The new cubes obtained are then partitioned into classes  $\UU^{(j+1)}$ and $\NN^{(j+1)}$ according to whether they are $(\eta,L_{j+2})$-uniform. Note that the cubes in $\UU^{(j)}$ and rectangles in $\RR^{(j)}$ remain cells of $\PP^{(j+1)}$.
Note that for each cube $Q\in\NN^{(j)}$ the total measure of all the rectangles obtained is at most $16L_{j+1}L_j^{-1}|Q|$, hence summing over all cubes the total measure of the rectangles obtained this way is at most $4L_{j+1}L_j^{-1}$. We adjoin these rectangles to $\RR^{(j)}$ to form $\RR^{(j+1)}$. Note that this way the total measure of the rectangles is always bounded by
\[\sum_{j=0}^\infty \frac{16L_{j+1}}{L_j} \leq \sum_{j=0}^\infty 2^{-(j+2)} \eta \leq \frac{\eta}{2},\]
hence \eqref{3.4.2} holds.

A key notion in regularization arguments is that of the \emph{index} or \emph{energy} of a set with respect to a partition. In our context we define it as follows.
Let $\{C_k\}_{k=1}^K$ denote the collection of cells that constitute $\PP^{(j)}$. For any given cell $C^k=Q^k_1\times Q^k_2$ in $\PP^{(j)}$, where $Q^k_i$ could be either a square or a rectangle, we let $\de^k_i$ 
denote the relative density of $B_i$ in $Q^k_i$ for $i=1,2$, and define the \emph{energy} of $(B_1,B_2)$ with respect to  $\PP^{(j)}$ by
\be\label{3.4.6}
\EE(B_1,B_2;\PP^{(j)}):= \frac{1}{2}\,\sum_{C^k\in \PP^{(j)}} \bigl((\de^k_1)^2 + (\de^k_2)^2\bigr)\,|C^k|.
\ee

It is not hard to see that the energy is always at most 1 and is increasing when the partition is refined.
To be more precise, we say a partition $\PP'$ is a refinement of $\PP$ if every cell $C=Q_1\times Q_2$ of $\PP$ is decomposed into cells $C^{\ell,{\ell'}}=Q^\ell_1\times Q^{\ell'}_2$ of $\PP'$ so that cubes (or rectangles) $Q^\ell_1$ and $Q^{\ell'}_2$ form a partition of $Q_1$ and  $Q_2$ respectively.
Then
$|Q_1|=\sum_\ell |Q^\ell_1|$ and $|B_1\cap Q_1|=\sum_\ell |B_1\cap Q^\ell_1|$, hence writing $\de_1$ for the relative density of $B_1$ on $Q_1$ and
$\de^\ell_1$ for the relative density of $B_1$ on $Q^\ell_1$ one has
\be\label{3.4.7}
\sum_{\ell} (\de^\ell_1)^2\,|Q^\ell_1|=(\de_1)^2\,|Q_1|+\sum_\ell (\de^\ell_1-\de_1)^2\,|Q^\ell_1|.
\ee
Similarly
\be\label{3.4.8}
\sum_{\ell'} (\de^{\ell'}_2)^2\,|Q^l_2|=(\de_2)^2\,|Q_2|+\sum_{\ell'} (\de^{\ell'}_2-\de_2)^2\,|Q^{\ell'}_2|.
\ee
Multiplying equations \eqref{3.4.7} by $|Q_2|$, \eqref{3.4.8} by $|Q_1|$, and adding, we get
\be\label{3.4.9}
\sum_{\ell,{\ell'}} \bigl((\de^\ell_1)^2+(\de^{\ell'}_2)^2\bigr)\,|C^{\ell,{\ell'}}| = \bigl((\de_1)^2+(\de_2)^2\bigr)\,|C|+\sum_{\ell,{\ell'}} \bigl((\de^\ell_1-\de_1)^2+(\de^{\ell'}_2-\de_2)^2\bigr)\,|C^{\ell,{\ell'}}| .
\ee

Going back to our construction we have decomposed each cell $C^k=Q_1\times Q_2\in\NN^{(j)}$ into cubes of the form $C^{\ell,{\ell'}}=Q^\ell_1\times Q^{\ell'}_2$ where $Q^\ell_1=s_1+\ell L+Q_L$, $Q^{\ell'}_2=s_2+{\ell'}L+Q_L$ for some ${\ell}\in \{1,\ldots,m\}^{d_1}$ and ${\ell'}\in \{1,\ldots,m\}^{d_2}$, and into a collection of $(d_1+d_2)$-dimensional rectangles of small total measure. By \eqref{3.4.51} there at least $\eta^2m^{d_1}/4$ values of $\ell$ for which $|\de^\ell_1-\de_1|^2\geq \eta^2/4$.  Thus, as $|Q_1|=L_j^{d_1}$, $|Q^\ell_1|=L^{d_1}_{j+1}$ for all $\ell$, and $m=\lfloor L_j/L_{j+1}\rfloor\geq \frac{1}{2}L_j/L_{j+1}$, we have that
\be\label{3.4.10}
\sum_{\ell\in \{1,\ldots,m\}^{d_1}} (\de^\ell_1-\de_1)^2\,|Q^\ell_1| \geq \frac{\eta^4}{64}\,|Q_1|.
\ee

By \eqref{3.4.9} this implies that the energy of $(B_1,B_2)$ with respect to the collection of cells of $\PP^{(j+1)}$ contained in $C^k=Q_1\times Q_2$ given by the left side of \eqref{3.4.9} is at least
\be\label{3.4.11}
\EE(B_1,B_2;\PP^{(j+1)}|_{C^k}) \,\geq\, \frac{1}{2}\,(\de_1^2+\de_2^2)\,|C^k|+\frac{\eta^4}{128}\,|C^k|.
\ee
This holds for all non-uniform cells $C^k\in\NN^{(j)}$ and by our assumption that the total measure of $\NN^{(j)}\geq \eta/2$ it follows that
\be\label{3.4.12}
\EE(B_1,B_2;\PP^{(j+1)}) \,\geq\, \EE(B_1,B_2;\PP^{(j)}) \,+\,\ \frac{\eta^5}{256}.
\ee

Thus the procedure must stop in $j\leq 256\,\eta^{-5}$ steps providing a satisfactory partition. As explained above this leads to a cell $C=Q_1\times Q_2$ satisfying the conclusions of Theorem \ref{reglemma}.\qed

\bigskip

\bigskip


\bigskip
\noindent
\emph{Acknowledgements.} We would like to thank the anonymous referee for useful comments and suggestions which have greatly improved the exposition of this article.

\bigskip

\bigskip

\end{document}